\keywords{universes, fibrations, Hofmann--Streicher lifting, 2-categories, pseudo-adjunctions}
\theoremstyle{definition}\newtheorem{con}[thm]{Construction}
\NewDocumentCommand\PsCoalg{m}{\text{\rm Ps-}#1\text{\rm -Coalg}}
\NewDocumentCommand\Gray{}{\mathbf{Gray}}
\NewDocumentCommand\Yo{m}{\mathbf{y}_{\!#1}}
\NewDocumentCommand\Op{}{\Sup{\textit{op}}}
\NewDocumentCommand\Co{}{\Sup{\textit{co}}}
\NewDocumentCommand\Sum{m}{\Sigma\Sub{#1}}
\NewDocumentCommand\OplSum{m}{\Sigma\Sup{\textit{opl}}\Sub{#1}}
\NewDocumentCommand\OplDiag{m}{\Delta\Sup{\textit{opl}}\Sub{#1}}
\NewDocumentCommand\LaxSum{m}{\Sigma\Sup{\textit{lax}}\Sub{#1}}
\NewDocumentCommand\LaxDiag{m}{\Delta\Sup{\textit{lax}}\Sub{#1}}
\NewDocumentCommand\CATFib{}{\CAT\Co_{\mathit{fib}}}
\NewDocumentCommand\FIBFib{}{\mathbf{FIB}_{\mathit{fib}}}
\NewDocumentCommand\Lift{mmm}{#2^* #3}
\NewDocumentCommand\LiftArr{mmm}{#2^\dagger #3}
\NewDocumentCommand\Diag{m}{\Delta\Sub{#1}}
\NewDocumentCommand\CAT{}{\mathbf{Cat}}
\NewDocumentCommand\SET{}{\mathbf{Set}}
\NewDocumentCommand\PtSET{}{\mathbf{1}/\mathbf{Set}}
\NewDocumentCommand\UU{}{\mathcal{U}}
\NewDocumentCommand\sFIB{m}{\mathbf{sFib}_{#1}}
\NewDocumentCommand\FIB{m}{\mathbf{Fib}_{#1}}
\NewDocumentCommand\DFIB{m}{\mathbf{DFib}_{#1}}
\begin{document}

\title[Hofmann--Streicher lifting of fibred categories]{Hofmann--Streicher lifting of fibred categories\\[4pt] \small\normalfont (Extended Version)}
\titlecomment{{\lsuper*}This is an extended version of \emph{Hofmann--Streicher Lifting of Fibred Categories} in LICS 2025, which was dedicated to the memory of Thomas Streicher (1958--2025).}
\thanks{This work was funded by the United States Air Force Office of Scientific Research under grant FA9550-23-1-0728 (\emph{New Spaces for Denotational Semantics}; Dr Tristan Nguyen, Program Manager). Views and opinions expressed are however those of the authors only and do not necessarily reflect those of AFOSR}
\author[A.~Slattery]{Andrew Slattery\lmcsorcid{0009-0008-4933-821X}}
\author[J.~Sterling]{Jonathan Sterling\lmcsorcid{0000-0002-0585-5564}}

\address{Computer Laboratory, University of Cambridge}
\email{aws46@cam.ac.uk, js2878@cl.cam.ac.uk}

\begin{abstract}
  In 1997, Hofmann and Streicher introduced an explicit construction to lift a Grothendieck universe from the category of sets into the category of set-valued presheaves on a small category. More recently, Awodey presented an elegant functorial analysis of this construction in terms of the \emph{categorical nerve}, the right adjoint to the functor that takes a presheaf to its category of elements; in particular, the categorical nerve's functorial action on the universal small discrete fibration gives the generic family of the universe's Hofmann--Streicher lifting.
  Inspired by Awodey's analysis, we define a relative version of Hofmann--Streicher lifting in terms of the right pseudo-adjoint to the 2-functor given by postcomposition with a fibration. Finally, we construct a new 2-bifibration of fibrations in which the opcartesian and cartesian lifts arise from these pseudo-adjunctions.
\end{abstract}

\maketitle

\tableofcontents

\begin{xsect}{Introduction}
  Universes were originally introduced to axiomatic set theory by Zermelo~\cite{zermelo:1930} in 1930, and reinvented in the early 1960s by the Grothendieck School as a technical device for controlling the size of categories~\cite{sga:4}. In the subsequent decade, several important developments occurred in short order:
  \begin{itemize}
    \item In 1971, Per Martin-L\"of introduced his own version of universes in the context of his intuitionistic type theory~\cite{martin-lof:1971}, which Girard noticed was overly strong~\cite{girard:1972}. This led Martin-L\"of to codify his now-standard presentation of (predicative) universes in type theory~\cite{martin-lof:itt:1975,martin-lof:1979,martin-lof:1984}.
    \item Around the same time, Bénabou developed a categorical axiomatisation of universes~\cite{benabou:1973} that makes sense for any elementary topos, subsuming both Zermelo and Grothendieck's set theoretic universes \emph{and} Martin-L\"of's type theoretic universes.
  \end{itemize}

\noindent
  The type theoretic and category theoretic viewpoints on universes continued to develop in parallel. By the mid 1990s,  Martin-L\"of type theory had acquired several credible notions of \emph{categorical model}~\cite{cartmell:1978,cartmell:1986,jacobs:1993,dybjer:1996}; this made it possible to consider natural questions, such as whether models of Martin-L\"of type theory are stable under standard categorical constructions like presheaves. This question was answered in the affirmative by the late Martin Hofmann and Thomas Streicher in 1997, and the critical ingredient was what we now refer to as the \emph{Hofmann--Streicher universe lifting} construction~\cite{hofmann-streicher:1997}, which we recall below as reformulated by Awodey~\cite{awodey:2024:universes}:\footnote{Hofmann and Streicher~\cite{hofmann-streicher:1997} originally presented the generic family as a presheaf on the category of elements of $\UU_{\hat{C}}$, recalling the equivalence $\hat{C}/X\simeq \widehat{\int_CX}$. Equivalently viewing the universe as an arrow, as Awodey does, makes the categorical analysis considerably more direct.}

  \begin{quote}
    \itshape
    Given a universe $p\colon\tilde\UU\to\UU$ in $\SET$ and a $\UU$-small category $C$, there is a universe $p_{\hat{C}}\colon\tilde\UU_{\hat{C}}\to \UU_{\hat{C}}$ in $\hat{C}$ defined as follows:
    \begin{align*}
      \tilde\UU_{\hat{C}}\prn{c} &= \CAT\prn{\prn{C/c}\Op,\tilde\UU}
      \\
      \UU_{\hat{C}}\prn{c} &= \CAT\prn{\prn{C/c}\Op,\UU}
      \\
      p_{\hat{C}}^c\prn{x} &= \prn{f\colon d\to c} \mapsto p\prn{x\prn{f}}
    \end{align*}
  \end{quote}

  Hofmann and Streicher's lifting construction does not depend on the axioms of universes, but it does preserve them. In fact, the construction itself requires only a full internal subcategory of $\SET$ presented by a generic family.

  \begin{xsect}{Hofmann--Streicher lifting and universal properties}
    Although universes in the sense of Bénabou~\cite{benabou:1973}, Streicher~\cite{streicher:2005}, and Martin-L\"of~\cite{martin-lof:itt:1975} are not defined by means of a universal property, Hofmann--Streicher lifting  does preserve the universal properties of more restrictive kinds of universes, including Lawvere's subobject classifiers. Indeed, when applied to the full internal subcategory of \emph{propositions} in $\SET$ which is presented by the subobject classifier $\brc{1}\hookrightarrow\mathbf{2}$, the Hofmann--Streicher lifting construction yields the subobject classifier of any presheaf topos (see Awodey~\cite{awodey:2024:universes}); indeed, we have:
    \[
      \Omega_{\hat{C}}\prn{c} = \CAT\prn{\prn{C/c}\Op,\mathbf{2}} = \mathrm{Sieve}_C\prn{c}
    \]

    \begin{rem}
      There is a subtlety here, namely that a category $C$ is never $\mathbf{2}$-small unless it is either the initial or terminal category; nonetheless, the Hofmann--Streicher lifting does exist and is the subobject classifier of $\hat{C}$. In fact, the assumption that $C$ is $\UU$-small was needed by Hofmann and Streicher only to close the lifted universe under type theoretic connectives such as function spaces; closure of $\Omega_{\hat{C}}$ under logical connectives follows by different means.
    \end{rem}

    \noindent
    The universal property of the subobject classifier is 1-dimensional and can therefore be phrased in the language of 1-toposes. The corresponding notion in a 2-dimensional topos would then classify (internal) \emph{discrete fibrations} rather than subobjects, as described by Weber~\cite{weber:2007}. In the 2-topos $\CAT$ of categories, the classifying discrete opfibration is of course $\partial_1\colon \mathbf{1}/\SET\to \SET$.

    Although Weber~\cite{weber:2007} seems to have been unaware of Hofmann and Streicher's universe lifting construction, he appears to invent it independently. He applies it to the discrete opfibration classifier $\mathbf{1}/\SET\to\SET$ of the 2-topos $\CAT$ to obtain the discrete opfibration classifier of the (strict) functor 2-topos $\CAT\prn{\hat{C}}$ consisting of strict functors $C\Op\to\CAT$, natural transformations between them, and modifications between those. Of course, $\CAT\prn{\hat{C}}$ is 2-equivalent to the 2-category of \emph{split} fibrations, with functors strictly preserving the splittings, and natural transformations between these.
  \end{xsect}

  \begin{xsect}{Functoriality of Hofmann--Streicher lifting}
    Awodey~\cite{awodey:2024:universes} recently showed that Hofmann and Streicher's original universe lifting construction is part of a 1-functor $\nu_C\colon \mathbf{cat}_{s}\to\hat{C}$ that he calls the \emph{categorical nerve}, where we have written $\mathbf{cat}_{s}$ for the 1-category of small categories and functors taken up to equality.

    Two of Awodey's crucial insights are as follows:
    \begin{enumerate}
      \item The categorical nerve $\nu_C\colon\mathbf{cat}_s\to\hat{C}$ emerges as the right adjoint to the category of elements functor $\int_C\colon\hat{C}\to\mathbf{cat}_s$. In particular, Awodey defines:
      \[
        \nu_C\prn{D}\prn{c} = \mathbf{cat}_s\prn{C/c,D}
      \]
      \item The Hofmann--Streicher lifting of a universe $\tilde\UU\to\UU$ in $\SET$ to $\hat{C}$ is obtained by applying the categorical nerve to the arrow $\tilde\UU\Op\to\UU\Op$ in $\mathbf{cat}_s$ which is the classifying discrete \emph{fibration} in $\mathbf{cat}$:
      \begin{align*}
        \nu_C\prn{\UU\Op}\prn{c}
        &= \mathbf{cat}_s\prn{C/c,\UU\Op}
        \\
        &= \mathbf{cat}_s\prn{\prn{C/c}\Op,\UU}
        \\
        &= \UU_{\hat{C}}\prn{c}
      \end{align*}
    \end{enumerate}
    \noindent
    In hindsight, it can be seen that something very similar is happening in Weber's construction~\cite{weber:2007} of the classifying discrete opfibration of $\CAT\prn{\hat{C}}$. In particular, Weber considers the following 2-adjunction in which $E_C\prn{X} = \prn{\DelimMin{1}\int_CX\Op}\Op$ and $\mathrm{Sp}_C\prn{D}\prn{c} = \CAT\prn{\prn{C/c}\Op,D}$:
    \[
      \begin{tikzcd}[column sep=huge,cramped]
        \CAT
          \arrow[r,yshift=-1.5ex,swap,"\mathrm{Sp}_C"]
          \arrow[r,phantom,"\bot"]
        &
        {\CAT\prn{\hat{C}}}
          \arrow[l,yshift=1.5ex,swap,"E_C"]
      \end{tikzcd}
    \]

    To properly compare Awodey's and Weber's constructions, we first factor Weber's construction through the duality involutions to reveal a 2-dimensional version of Awodey's categorical nerve, which we write $\int_C\dashv\nu_C\colon \CAT\to\CAT\prn{\hat{C}}$:
    \[
      \begin{tikzcd}[column sep=large,cramped]
        \CAT
          \arrow[r,swap,yshift=-1.5ex,"\mathit{op}"]
          \arrow[r,phantom,"\bot"]
        &
        \CAT\Co
          \arrow[r,swap,yshift=-1.5ex,"\nu_C\Co"]
          \arrow[r,phantom,"\bot"]
          \arrow[l,yshift=1.5ex,swap,"\mathit{op}"]
        &
        \CAT\prn{\hat{C}}\Co
          \arrow[r,swap,yshift=-1.5ex,"\mathit{op}"]
          \arrow[r,phantom,"\bot"]
          \arrow[l,yshift=1.5ex,swap,"\int_C\Co"]
        &
        {\CAT\prn{\hat{C}}}
          \arrow[l,yshift=1.5ex,swap,"\mathit{op}"]
      \end{tikzcd}
    \]

    The Grothendieck construction $\int_C\colon \CAT\prn{\hat{C}}\to \CAT$ takes a presheaf of categories to its oplax colimit; its conjugation by duality involutions $\mathit{op}\circ\int_C\Co\circ\mathit{op}$ therefore takes a presheaf of categories to its \emph{lax} colimit. Therefore, the 2-dimensional version $\nu_C\colon\CAT\to\CAT\prn{\hat{C}}$ is some kind of ``oplax base change 2-functor'' and therefore the Hofmann--Streicher lifting 2-functor is some kind of ``lax base change 2-functor''.
  \end{xsect}

  \begin{xsect}{Our contributions}
    We expose a further generalisation of Hofmann--Streicher lifting. In particular, we describe a \emph{relative} version of Hofmann--Streicher lifting defined in terms of a right \emph{pseudo}-adjoint to the postcomposition functor $\prn{p\circ-}\colon \FIB{A}\to\FIB{B}$ for a given fibration $p\colon A\rightarrowtriangle B$, where $\FIB{C}$ is the 2-category of fibred categories over $C$, fibred functors over $C$, and fibred natural transformations between them.

	    We therefore differ from the prior work in main two ways: after replacing $\CAT\prn{\hat{C}}$ with $\FIB{C}$, we are then able to relativise the total category 2-functor $\int_A\colon\FIB{A}\to\CAT$ to the more general postcomposition 2-functor $\prn{p\circ -}\colon \FIB{A}\to \FIB{B}$ induced by any fibration $p\colon A\rightarrowtriangle B$. Indeed, under the identification $\CAT\cong\FIB{\mathbf{1}}$, the total category 2-functor is precisely the postcomposition with the universal functor $A\to\mathbf{1}$. The fibrational relativisation of Hofmann--Streicher lifting makes it possible for the first time to consider \emph{iterated} lifting of universes and other structures.

    \begin{xsect}[sec:why-not-split]{From split fibrations to fibrations}
      The most direct generalisation of Weber's construction would have yielded a 2-functor $\CAT\prn{\hat{B}}\to\CAT\prn{\hat{A}}$ but it is ultimately more useful to work with fibrations.  Indeed, $\CAT\prn{\hat{C}}$ is 2-equivalent to the 2-category $\mathbf{sFib}_C$ of split fibrations over $C$, split fibred functors over $C$, and fibred natural transformations between them. Unfortunately, the evident forgetful 2-functor $U\colon \sFIB{C}\to \FIB{C}$ is not even a bi-equivalence, although Giraud exhibits both left and right 2-adjoints~\cite{giraud:1971}.
      Therefore, although it does not matter in the end whether one asks for fibrations or split fibrations (since every fibration is equivalent to a split fibration), it is overly restrictive in practice to ask that these splittings be preserved on the nose by morphisms. This is a special case of a familiar pattern in higher categorical algebra, where one prefers to consider \emph{weak} homomorphisms regardless of whether the algebras are strict or weak.
    \end{xsect}

    \begin{xsect}[sec:why-relativise]{Relativising along a fibration \texorpdfstring{$p\colon A\rightarrowtriangle B$}{p:A->B}}
      As it stands, the Hofmann--Streicher construction cannot be iterated. For example, we may take a universe $\UU\in\SET$ of sets to a universe $\UU_{\hat{C}}\in\hat{C}$ of presheaves; but if $D$ is an internal category in $\hat{C}$, we do not have any way to lift $\UU_{\hat{C}}$ to a universe of internal presheaves on $D$ over $\hat{C}$.
      To solve this problem, we \emph{relativise} the Hofmann--Streicher construction so that it goes not from $\CAT$ to $\FIB{A}$ but rather from $\FIB{B}$ to $\FIB{A}$, parameterised in a fibration $p\colon A\rightarrowtriangle B$. When $B=\mathbf{1}$, we recover the usual Hofmann--Streicher lifting.

      To see how relativisation helps with iteration, we return to our example of a universe $\UU$ of sets, which we can view as a category of $\UU$-small sets and functions between them. Given a small category $C$, we can lift $\UU$ to a fibred category $\UU_C$ over $C$ using the ordinary Hofmann--Streicher lifting. Next, an internal category $D\in\hat{C}$ can be viewed instead as a fibration $p_D\colon D\rightarrowtriangle C$; our \emph{relative} Hofmann--Streicher lifting of $\UU_C\in\FIB{C}$ along the fibration $p_D\colon D\rightarrowtriangle C$ produces, then, a fibration $\UU_D\in \FIB{D}$ which is the desired universe of internal presheaves on $D$ over $\hat{C}$. Here we are using the standard observation that for a fibration $p\colon D\rightarrowtriangle C$, an additional fibration $q\colon E\rightarrowtriangle D$ can equally well be viewed as a fibration in $\CAT$ over $D$ \emph{or} as an internal fibration in $\FIB{C}$ over $p$.

      If ordinary Hofmann--Streicher lifting is the needful tool for defining presheaf models of Martin-L\"of type theory, then the relative Hofmann--Streicher lifting is precisely what is needed to define \emph{internal} presheaf models of Martin-L\"of type theory within another presheaf model. In fact, this arises more often than one might at first guess: for example, Bizjak~\etal~\cite{bgcmb:2016} describe what \emph{ought} to amount to the standard presheaf model of cubical type theory~\cite{cchm:2017,abcfhl:2021} internal to the presheaf model of guarded dependent type theory~\cite{bmss:2011,bizjak-mogelberg:2020}, but lacking the tools developed here, it was necessary to simultaneously describe the guarded and cubical aspects.
    \end{xsect}

    \begin{xsect}{Relative Hofmann--Streicher lifting as a pseudo-adjoint}
      Given a fibration $p\colon A\rightarrowtriangle B$, our \emph{relative} Hofmann--Streicher lifting takes the form of a 2-functor $\FIB{B}\to \FIB{A}$, which is derived from the right \emph{pseudo}-adjoint to the 2-functor $\prn{p\circ-}\colon \FIB{A}\to \FIB{B}$ that sends a fibration $q\colon E\rightarrowtriangle A$ to the composite fibration $p\circ q\colon E\rightarrowtriangle B$.
    \end{xsect}

    \begin{xsect}{A new 2-dimensional bifibration of fibred categories}
      It is well-known that the base projection 2-functor $\FIB{}\rightarrowtriangle\CAT$ is both a 2-fibration and a 2-opfibration, \emph{i.e.}\ a 2-bifibration: in essence, this is because pullback of fibrations has a left adjoint. In the original version of this paper~\cite{slattery-sterling:2025} presented at LICS~2025, we raised the question of what 2-dimensional bifibration our pseudoadjunctions $\OplSum{p}\dashv\OplDiag{p}\colon\FIB{A}\to\FIB{B}$ induce, noting that at the very least the base 2-category would need to have only \emph{fibrations} as 1-cells; beyond this, it was unclear at the time what the 2-cells ought to be.

      In this extended edition, we resolve our old question by constructing a 2-dimensional opfibration $\partial_1\colon \FIBFib\to \CATFib$ where $\CATFib$ is the 2-category of categories, fibrations between them, and natural transformations \emph{going the opposite direction}. The fibre 2-category over $B\in \CATFib$ has fibred categories over $B$ for 0-cells, and \emph{displayed} rather than fibred functors between the fibred categories as 1-cells.
    \end{xsect}
  \end{xsect}
\end{xsect}

\begin{xsect}{Technical overview}

  In this section, we provide a high-level overview of the mathematical content of the paper; this can be read by someone who wishes to understand the overall architecture of the generalised Hofmann--Streicher lifting and its applications, without necessarily grappling with all the details. For the full technical development, see \S~\ref{sec:full-development}--\ref{sec:relative-hs-lifting}.

  \begin{xsect}{Displayed and fibred categories}
    We presuppose some knowledge of fibred category theory. On top of this, we will at times employ a convention that assists with remembering when a particular fibration is to be thought of a special kind of parameterised category \emph{vs.}\ as a mere functor satisfying some properties.
    \begin{enumerate}
      \item We will refer to objects of $\CAT/B$ and $\FIB{B}$ as ``displayed'' and ``fibred'' categories to emphasise their role as objects of a 2-category of category-like things.
      \item Given a displayed category $E\in \CAT/B$, we will write $B.E$ for its total category and we shall refer to the projection functor $p_{B.E}\colon B.E\to B$ as its \emph{display}.
      \item Given a fibred category $E\in\FIB{B}$, we shall refer to the projection $p_{B.E}\colon B.E\rightarrowtriangle B$ as the corresponding \emph{fibration}.
    \end{enumerate}
    \noindent
    The specifics of how to represent displayed and fibred categories do not play a role in this paper; some readers may prefer to think of them as represented by their underlying display functors, whereas other readers may prefer to think of them in the terms advocated by Ahrens and Lumsdaine~\cite{ahrens-lumsdaine:2019}. We take the latter viewpoint in our technical development, but it is not mandatory.
  \end{xsect}

  \begin{xsect}{Base change and sum of displayed categories}
    Let $p\colon A\to B$ be an arbitrary functor. There is a \emph{change of base} 2-functor $\Diag{p}\colon\CAT/B\to\CAT/A$ computed on display functors by \emph{pseudo-pullback} along $p$; the change of base 2-functor has a left pseudo-adjoint $\Sum{p}\colon \CAT/A\to\CAT/B$ computed on display functors by \emph{postcomposition} with $p$. As left pseudo-adjoint to change of base, $\Sum{p}$ deserves to be called the \emph{sum} along $p$.
  \end{xsect}

  \begin{xsect}{Base change and sum of fibred categories}
    For any functor $p\colon A\to B$, the base change 2-functor $\Diag{p}\colon \CAT/B\to \CAT/A$ restricts to a base change 2-functor $p^*\colon \FIB{B}\to\FIB{A}$ as follows:\footnote{In fact, $p^*\colon \FIB{B}\to\FIB{A}$ can be computed on fibrations in terms of strict pullbacks if you like, because a strict pullback of a fibration is always a pseudo-pullback~\cite{joyal-street:1993}.}
    \[
      \DiagramSquare{
        width = 2.5cm,
        north/style = {->,exists},
        west = U_B,
        east = U_A,
        north = p^*,
        south = \Diag{p},
        nw = \FIB{B},
        sw = \CAT/B,
        ne = \FIB{A},
        se = \CAT/A,
      }
    \]

    The base change 2-functor $p^*\colon \FIB{B}\to\FIB{A}$ always has a left pseudo-adjoint $p_!\colon \FIB{A}\to\FIB{B}$ that deserves to be called the \emph{sum (of fibrations) along $p$}, but this does \emph{not} factor through the sum of $\Sum{p}$ of displayed categories. Indeed, this can be seen immediately because if $p\colon A\to B$ is not a fibration, then the sum of the underlying displayed category of a fibred category $E$ along $p$ need not be a fibred category.

    Interpreting B\'enabou, Streicher~\cite{streicher:2021:fib} explains that when $p\colon A\rightarrowtriangle\mathbf{1}$ is the punctual map, the sum of fibrations $p_!E$ is the localisation of $\Sum{p}\prn{E}$ that inverts all cartesian arrows; put more simply, $p_!E$ is the \emph{pseudo-colimit} of the pseudofunctor corresponding to $E$ under straightening (which we will denote by $E_\bullet\colon A\Op \to \CAT$), whereas $\Sum{p}\prn{E}$ is the \emph{oplax colimit} of the same pseudofunctor.
  \end{xsect}

  \begin{xsect}{Oplax sum and base change of fibred categories}\label{sec:intro:oplax-sum-and-base-change}
    Whenever $p\colon A\rightarrowtriangle B$ is a fibration, the sum of displayed categories along $p$ restricts to a 2-functor on fibred categories that computes what we would most rightly call the ``oplax sum'' along $p$.
    \[
      \DiagramSquare{
        width = 2.5cm,
        north/style = {->,exists},
        west = U_B,
        east = U_A,
        north = \OplSum{p},
        south = \Sum{p},
        nw = \FIB{B},
        sw = \CAT/B,
        ne = \FIB{A},
        se = \CAT/A,
      }
    \]

    Streicher~\cite[Theorem~4.1]{streicher:2021:fib} points out that in the case of a fibred category $E\in\FIB{A}$, the displayed projection functor $\OplSum{p}\prn{E}\to A$ over $B$ is fibred in the sense of preserving cartesian arrows. This tells us that cartesian lifts in $\OplSum{p}\prn{E}$ are computed successively using cartesian lifts in $A$ over $B$ and in $E$ over $A$.

    Our main technical result is to show that the oplax sum $\OplSum{p}\colon \FIB{A}\to\FIB{B}$ has a \emph{right} pseudo-adjoint $\OplDiag{p}\colon\FIB{B}\to\FIB{A}$ which can be used to compute a generalisation of Hofmann and Streicher's universe construction, and to construct this pseudo-adjoint explicitly. We are forced by convention, it would seem, to refer to $\OplDiag{p}$ as the \emph{oplax base change} of fibred categories along $p$.

    The fibred Yoneda lemma gives a quick heuristic to compute $\OplDiag{p}\prn{F}$ as a split fibration for any $F\in\FIB{B}$ as follows (where we have written $\Yo{A}\colon A\hookrightarrow \FIB{A}$ for the functor that sends $a\in A$ to the discrete domain fibration $\partial_1\colon A/a\to A$): its straightening $\OplDiag{p}\prn{F}_\bullet \colon A\Op\to \CAT$ should satisfy
      \[\OplDiag{p}\prn{F}_\bullet \cong \FIB{B}\prn{\OplSum{p}\prn{\Yo{A}\prn{-}},F}\]

    Of course, one still must check that this construction really does yield a right pseudo-adjoint to $\OplSum{p}$.

  \end{xsect}

  \begin{xsect}{Lax sum and base change of fibred categories}
    By conjugating with duality involutions, we can obtain \emph{lax} versions of sum and base change for fibred categories. In particular, if we have the pseudo-adjunction $\OplSum{p}\dashv \OplDiag{p}\colon \FIB{B}\to\FIB{A}$, we obtain a corresponding pseudo-adjunction $\LaxSum{p}\dashv\LaxDiag{p}\colon\FIB{B}\to\FIB{A}$ as follows:
    \[
      \begin{tikzcd}
        & \FIB{A}\Co\arrow[r,"\prn{\OplSum{p}}\Co"]
        & \FIB{B}\Co\arrow[dr,sloped,"\mathit{op}"]
        \\
        \FIB{A}
          \arrow[ur,sloped,"\mathit{op}"]
          \arrow[rrr,dashed,bend left=12,"\LaxSum{p}"description]
          \arrow[rrr,phantom,"\bot"]
        &&&
        \FIB{B}
          \arrow[lll,dashed,bend left=12,"\LaxDiag{p}"description]
          \arrow[dl,sloped,swap,"\mathit{op}"]
        \\
        & \FIB{A}\Co\arrow[ul,sloped,swap,"\mathit{op}"]
        & \FIB{B}\Co\arrow[l,"\prn{\OplDiag{p}}\Co"]
      \end{tikzcd}
    \]
  \end{xsect}

  \begin{xsect}[sec:intro:lax-base-change]{Hofmann--Streicher lifting as lax base change}
    Let $p\colon A\rightarrowtriangle B$ be a fibration, and let $E\in\FIB{B}$ be a fibred category over $B$;	 then we can finally define the generalised Hofmann--Streicher lifting of $E$ along $p$ to be the \emph{lax base change} $\LaxDiag{p}\prn{E}\in\FIB{A}$. In order to explain why this deserves the name, we consider the case where ${!_A}\colon A\rightarrowtriangle\mathbf{1}$ is the punctual map; for any category $E\in\CAT\cong\FIB{\mathbf{1}}$, we compute the fibres of the lax base change $\LaxDiag{!_A}\prn{E}\in\FIB{A}$:
    \begin{align*}
      \LaxDiag{!_A}\prn{E}_\bullet
      &= \prn{\OplDiag{!_A}\prn{E\Op}}\Op_\bullet
      \\
      &=
      \prn{
        \OplDiag{!_A}\prn{E\Op}_\bullet
      }\Op
      \\
      &\cong
      \CAT\prn{
        \OplSum{!_A}\prn{\Yo{A}\prn{-}},
        E\Op
      }\Op
      \\
      &=
      \CAT\prn{
        A/-,
        E\Op
      }\Op
      \\
      &=
      \CAT\prn{
        \prn{A/-}\Op,
        E
      }
    \end{align*}

    Now let $\UU$ be a Grothendieck universe presented by a generic family of $\UU$-small sets $\pi_\UU\colon\tilde\UU\to\UU$; we now assume that $A$ is $\UU$-small. The generic family of $\UU$ is the image in $\SET$ under $\mathrm{ob}\colon\CAT\to\SET$ of the forgetful functor $\partial_1\colon\PtSET_\UU\to\SET_\UU$ from pointed $\UU$-small sets to $\UU$-small sets.
    Applying the functorial action of $\LaxDiag{!_A}\colon \CAT\to\FIB{A}$ to this forgetful functor, followed by that of $\mathrm{ob}\colon \FIB{A}\to\DFIB{A}\cong\hat{A}$ gives us \emph{precisely} the generic family of Hofmann and Streicher's lifted presheaf universe:
    \[
      \mathrm{ob}\prn{\CAT\prn{\prn{A/-}\Op,\partial_1}}
      \colon
      \mathrm{ob}\prn{\CAT\prn{\prn{A/-}\Op,\mathbf{1}/\SET_{\UU}}}
      \to
      \mathrm{ob}\prn{\CAT\prn{\prn{A/-}\Op,\SET_\UU}}
    \]
  \end{xsect}

\end{xsect}

\begin{xsect}[sec:abstract-proof]{Existence of oplax base change}
  In this section we use existing results of two-dimensional category theory to establish the \emph{existence} of a right pseudo-adjoint to the oplax sum $\OplSum{p}\colon \FIB{A}\to\FIB{B}$. The key result is Corollary~9.1 of Nunes~\cite{lucatelli-nunes:2016}, which we recall below. (We thank Nathanael Arkor for pointing this out to us.)

  \begin{prop}\label{prop:pseudo-adjoint-lifting}
    Let $T \colon C \to C$ and $S \colon D \to D$ be pseudo-comonads. In the commutative diagram of pseudofunctors
    \[
      \DiagramSquare{
        nw = \PsCoalg{T},
        ne = \PsCoalg{S},
        sw = C,
        se = D,
        south = F,
        north = \hat{F},
        west = U_T,
        east = U_S,
        width = 3cm,
      }
    \]
    where $U_T$ and $U_S$ are the forgetful 2-functors from the pseudo-coalgebra 2-categories to their base 2-categories, if:
    \begin{itemize}[label=$\triangleright$]
      \item $F$ has a right pseudo-adjoint, and
      \item $\text{Ps-}T\text{-Coalg}$ has descent objects (a type of 2-dimensional limit),
    \end{itemize} then $\hat F$ also has a right pseudo-adjoint.
  \end{prop}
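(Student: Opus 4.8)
The statement is a \emph{pseudo} form of the classical adjoint lifting theorem in its comonadic, right-adjoint guise, and the plan is to obtain it from the biadjoint triangle theorem of Lucatelli Nunes~\cite{lucatelli-nunes:2016}, of which it is essentially the specialisation to pseudo-coalgebra 2-categories. First I would rewrite the given square as a triangle over $D$: because $\PsCoalg{T}$ and $\PsCoalg{S}$ are genuine pseudo-coalgebra 2-categories, the forgetful 2-functors $U_T$ and $U_S$ carry right pseudo-adjoints, the cofree functors $C_T$ and $C_S$; and the identity $U_S\hat F = F U_T$ exhibits $\hat F$ as a pseudofunctor over the leg $F U_T\colon \PsCoalg{T}\to D$. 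I would then verify the hypotheses of the (appropriately dualised) biadjoint triangle theorem: $U_S$ has the right pseudo-adjoint $C_S$; the leg $F U_T$ has the right pseudo-adjoint $C_T\circ G$, where $G$ is the assumed right pseudo-adjoint of $F$ (composites of pseudo-adjoints are pseudo-adjoints); a mild condition on $U_S$ --- pseudo-precomonadicity --- which holds automatically since $U_S$ is the forgetful 2-functor out of $\PsCoalg{S}$; and finally that $\PsCoalg{T}$ admits descent objects, which is precisely the second assumption. The theorem then delivers the desired right pseudo-adjoint $\hat G$ to $\hat F$.

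For the reader who wants the construction unpacked, here is the argument it rests on. Given a pseudo-$S$-coalgebra $\mathbf d = (d,\gamma,\dots)$, form its cofree cobar resolution $C_S d \rightrightarrows C_S S d \cdots$ in $\PsCoalg{S}$ (abbreviated $C_S S^{\bullet} d$); since this diagram is $U_S$-split by the counit of $S$, it is an \emph{absolute} descent diagram, so $\mathbf d$ is already its descent object --- no hypothesis on limits in $\PsCoalg{S}$ is needed. A direct computation from $F \dashv G$ and the universal property of cofree coalgebras shows $\PsCoalg{T}(\mathbf c, C_T G d)\simeq \PsCoalg{S}(\hat F\mathbf c, C_S d)$ pseudo-naturally, so the sought right adjoint must agree with $C_T\circ G$ on cofree objects. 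One then \emph{defines} $\hat G(\mathbf d)$ to be the descent object --- which exists by hypothesis --- in $\PsCoalg{T}$ of the image under $C_T\circ G$ of the cobar resolution of $\mathbf d$. The adjunction $\hat F\dashv\hat G$ then follows from the chain of pseudo-natural equivalences
\[
  \PsCoalg{S}(\hat F\mathbf c,\mathbf d)
  \;\simeq\;
  \lim\nolimits^{\mathrm{desc}}\,\PsCoalg{S}(\hat F\mathbf c, C_S S^{\bullet} d)
  \;\simeq\;
  \lim\nolimits^{\mathrm{desc}}\,\PsCoalg{T}(\mathbf c, C_T G S^{\bullet} d)
  \;\simeq\;
  \PsCoalg{T}(\mathbf c,\hat G\mathbf d),
\]
where the outer equivalences use that representable 2-functors preserve descent objects (used for the descent object $\mathbf d$ on the left and for the defining descent object of $\hat G\mathbf d$ on the right) and the middle one is the cofree case above applied levelwise; by 2-categorical representability, such a family of equivalences, pseudo-natural in $\mathbf c$ and $\mathbf d$, is exactly a pseudo-adjunction.

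The step I expect to be the main obstacle is \emph{coherence}. One must choose the cobar resolutions and their descent objects pseudo-functorially in $\mathbf d$, so that $\hat G$ is a genuine pseudofunctor rather than a mere assignment on objects, and one must upgrade each of the equivalences above from pointwise to pseudo-natural, assembling the witnessing invertible modifications compatibly and checking the triangle identities hold up to coherent isomorphism. This bookkeeping is exactly what makes the hands-on route laborious and is why I would prefer to invoke the biadjoint triangle theorem, which carries it out once and for all.
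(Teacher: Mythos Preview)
Your approach is correct and is in fact exactly how the paper handles this proposition: the paper does not give its own proof but simply recalls the statement as Corollary~9.1 of Lucatelli Nunes~\cite{lucatelli-nunes:2016}, and your derivation via the (dualised) biadjoint triangle theorem is precisely how that corollary is obtained in the cited reference. Your unpacking of the construction via the cobar resolution and descent objects is a faithful sketch of the underlying mechanism, and your identification of coherence bookkeeping as the reason to cite rather than reprove is apt.
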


  \begin{cor}
    For a fibration $p\colon A\rightarrowtriangle B$, the oplax sum 2-functor $\OplSum{p}\colon \FIB{A}\to\FIB{B}$ has a right pseudo-adjoint.
  \end{cor}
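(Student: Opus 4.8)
The plan is to deduce this from Proposition~\ref{prop:pseudo-adjoint-lifting}, applied to the presentation of $\FIB{A}$ and $\FIB{B}$ as 2-categories of pseudo-coalgebras. It is classical that for any category $C$ there is a pseudo-comonad $T_C$ on $\CAT/C$ — the cofree-fibration pseudo-comonad — whose 2-category of pseudo-coalgebras is 2-equivalent to $\FIB{C}$, the forgetful 2-functor $\PsCoalg{T_C}\to\CAT/C$ being the one that remembers only the underlying displayed category. I would invoke Proposition~\ref{prop:pseudo-adjoint-lifting} with $T = T_A$, $S = T_B$, and $F = \Sum{p}\colon\CAT/A\to\CAT/B$ the sum of displayed categories along $p$.

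The first thing to check is that $F = \Sum{p}$ lifts to a pseudofunctor $\hat F$ between the pseudo-coalgebra 2-categories making the square strictly commute, and that this lift is exactly the oplax sum $\OplSum{p}\colon\FIB{A}\to\FIB{B}$. This is where the hypothesis that $p$ is a fibration enters: by Streicher's Theorem~4.1, recalled in \S~\ref{sec:intro:oplax-sum-and-base-change}, postcomposition with a fibration carries fibred categories to fibred categories, so $\Sum{p}$ does restrict along the two forgetful 2-functors. Making this precise means reading off, from a cleavage of $E\rightarrowtriangle A$ and a cleavage of $p$, a cleavage of $\OplSum{p}\prn{E}\rightarrowtriangle B$ — cartesian lifts being computed successively, first in $A$ over $B$ and then in $E$ over $A$ — and checking that this assignment is coherently pseudofunctorial and compatible with the pseudo-coalgebra structures; this is routine but somewhat lengthy bookkeeping.

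It then remains to verify the two hypotheses of Proposition~\ref{prop:pseudo-adjoint-lifting}. That $F = \Sum{p}$ has a right pseudo-adjoint is immediate, since $\Sum{p}$ is by construction the left pseudo-adjoint of the change of base $\Diag{p}\colon\CAT/B\to\CAT/A$. That $\PsCoalg{T_A}\simeq\FIB{A}$ has descent objects is clear after straightening: $\FIB{A}$ is 2-equivalent to the 2-category of pseudofunctors $A\Op\to\CAT$, pseudonatural transformations and modifications, in which all pseudo-limits — descent objects in particular — are computed pointwise from those of $\CAT$. Proposition~\ref{prop:pseudo-adjoint-lifting} then produces a right pseudo-adjoint to $\hat F = \OplSum{p}$; an explicit construction of it will be given later.

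I expect the real work to lie not in the pseudo-adjunction machinery but in the two ``soft'' inputs that feed it: assembling the cleavage and pseudo-coalgebra coherence data on $\OplSum{p}\prn{E}$ so that $\Sum{p}$ genuinely lifts to a \emph{strictly} commuting square of pseudofunctors between pseudo-coalgebra 2-categories, and pinning down that $\FIB{A}$ really is closed under descent objects inside $\CAT/A$. Both are essentially known — the former is the substance of Streicher's Theorem~4.1, the latter a standard 2-completeness property of fibrations — so once they are in place the conclusion is a direct appeal to Nunes's Corollary~9.1.
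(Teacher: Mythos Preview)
Your proposal is correct and follows essentially the same route as the paper: apply Nunes's lifting result with the cofree-fibration pseudo-comonads on $\CAT/A$ and $\CAT/B$, using $\Sum{p}\dashv\Diag{p}$ and bicategorical completeness of $\FIB{A}$ via straightening. The paper attributes the pseudo-comonadic presentation of $\FIB{C}$ specifically to Emmenegger--Mesiti--Rosolini--Streicher rather than calling it classical, and does not dwell on the coherence bookkeeping you flag; your aside about $\FIB{A}$ being ``closed under descent objects inside $\CAT/A$'' is slightly off --- only the intrinsic existence of descent objects in $\FIB{A}$ is needed, which you already argued correctly via straightening.
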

  \begin{proof}
    Our aim is to apply Proposition~\ref{prop:pseudo-adjoint-lifting} to the following commutative diagram which we recall from \S~\ref{sec:intro:oplax-sum-and-base-change}:
    \[
      \DiagramSquare{
        nw = \FIB{A},
        ne = \FIB{B},
        sw = \CAT/A,
        se = \CAT/B,
        south = \Sum{p},
        north = \OplSum{p},
        west = U_A,
        east = U_B,
        width = 3cm,
      }
    \]
    We first utilise Theorem 3.4 of Emmenegger~\etal~\cite{emmenegger-mesiti-rosolini-streicher:2024}, who present a lax idempotent pseudo-comonad $N_A$ on $\CAT/A$ whose 2-category of pseudo-coalgebras is equivalent to the 2-category $\FIB{A}$ of Grothendieck fibrations.

    Next, we recall that $\Sum{p}$ has the right pseudo-adjoint $\Diag{p}$, \viz the base change 2-functor. Finally, we note that $\FIB{A}$ has descent objects because it is bicategorically complete: by the Grothendieck construction, $\FIB{A}$ is biequivalent to the 2-category of pseudofunctors from $A\Op$ to $\CAT$. Hence we may apply Proposition~\ref{prop:pseudo-adjoint-lifting} to deduce that $\OplSum{p}$ has a right pseudo-adjoint.
  \end{proof}

 This proof does not immediately tell us how the right pseudo-adjoint of $\OplSum{p}$ behaves. In the next section we provide an explicit construction and show that it does indeed give the required pseudo-adjoint.
\end{xsect}

\begin{xsect}[sec:full-development]{Explicit computation of oplax base change}
  In what follows, we assume that all fibred categories are cloven, but not necessarily split; naturally, we do not require cleavings to be preserved by fibred functors. The impact of this choice depends on the foundational system in which this paper is interpreted:
  \begin{enumerate}
    \item In foundations where the axiom of choice holds, any fibred category can be equipped with a cleaving.\footnote{Technically, one would need the axiom of choice for \emph{classes} or \emph{large sets}.} Thus the forgetful 2-functor from cloven fibred categories to uncloven fibred categories would be a biequivalence.
    \item In foundations where choice may not hold (\eg a topos), the correct notion of fibred category is usually the cloven one anyway, which agrees with Chevalley's definition.
    \item In univalent foundations, a (univalent) fibred category is \emph{uniquely} cloven~\cite{ahrens-lumsdaine:2019}, so the question doesn't arise.
  \end{enumerate}

  \noindent
  We will use the following notation for the components of the given cleaving of a fibred category $E$:
  \[
    \begin{tikzpicture}[diagram]
      \SpliceDiagramSquare{
        nw = \Lift{E}{b_{01}}{e},
        ne = e,
        sw = b_0,
        se = b_1,
        south = b_{01},
        north = \LiftArr{E}{b_{01}}{e},
        nw/style = pullback,
        width = 3cm,
        height = 1.5cm,
        west/style = {|->},
        east/style = {|->},
      }
      \node[right = of ne] (B-E) {$B.E$};
      \node[right = of se] (B) {$B$};
      \draw[fibration] (B-E) to (B);
    \end{tikzpicture}
  \]

  \begin{xsect}{2-functoriality of oplax base change}
    In this section, we will define an ``oplax base change'' 2-functor $\OplDiag{p}\colon\FIB{B}\to\FIB{A}$ for any fibration $p\colon A\rightarrowtriangle B$.

    \begin{xsect}{Action of oplax base change on on 0-cells}
      \begin{con}[Oplax base change of a fibred category]\label{con:oplax-base-change}
        Let $E\in\FIB{B}$ be a fibred category over $B$.
        We first define a displayed category $\OplDiag{p}\prn{E}$ over $A$ called the \emph{oplax base change of $E$ along $p$} as follows.
        \begin{itemize}[label=$\triangleright$]
          \item A displayed object of $\OplDiag{p}\prn{E}$ over $a\in A$ is defined to be a fibred functor  $e\colon \OplSum{p}\prn{\Yo{A}\prn{a}}\to E$ over $B$. This is depicted in the surface diagram calculus~\cite{hummon:2012} as the following 1-cell in $\Gray$~\cite{gray:1974}:
          \begin{equation*}
            \begin{tikzpicture}[diagram]
              \node (e) {$e$};
              \node[below=1.25cm of e] (a) {$a$};
              \draw[{|->}] (e) to (a);
            \end{tikzpicture}
            \quad::\quad
            \begin{tikzpicture}[line width=.5pt,baseline=(current bounding box.center)]
              \begin{scope}[canvas is zx plane at y=0, yscale=-1, xscale=-1,transform shape]
                \CreateRect{4}{2.5}
                \path
                  coordinate (a) at (spath cs:north 0.25)
                  coordinate (yo) at (spath cs:north 0.5)
                  coordinate (sum-p) at (spath cs:north 0.75)
                  coordinate (E) at (spath cs:south 0.5)
                ;
                \path[spath/save global=horiz] (yo) to (E);
                \path[spath/save global=cup] (sum-p) to[in=-90,out=-90,looseness=2] coordinate[dot] (e) (a);
              \end{scope}
              \draw[spath/use=horiz,spath/use=cup,];
              \path
                node[anchor=east] at (yo.west) {\scriptsize$\Yo{A}$}
                node[anchor=east] at (a.west) {\scriptsize$a$}
                node[anchor=east] at (sum-p.west) {\scriptsize$\OplSum{p}$}
                node[anchor=west] at (E.east) {\scriptsize$E$}
                node[anchor=north west] at (e.south east) {\scriptsize$e$};
            \end{tikzpicture}
          \end{equation*}

          \item A displayed arrow from $e_0$ to $e_1$ over $a_{01}\colon a_0\to a_1$ is given by a 2-cell \[e_{01}\colon e_0\to e_1\circ \OplSum{p}\prn{\Yo{A}\prn{a_{01}}}\] in $\FIB{B}\prn{\OplSum{p}\prn{\Yo{A}\prn{a_0}},E}$; this is depicted as a 3-cell in $\Gray$ as follows:
          \[
            \DiagramSquare{
              nw = e_0,
              ne = e_1,
              north = e_{01},
              sw = a_0,
              se = a_1,
              south = a_{01},
              west/style = {|->},
              east/style = {|->},
              height = 1.5cm,
            }
            \quad::\quad
            \begin{tikzpicture}[line width=.5pt,baseline=(current bounding box.center)]
              \begin{scope}[canvas is zx plane at y=2,yscale=-1,xscale=-1,transform shape]
                \CreateRect{3}{2.5}
                \path
                  coordinate (n-a0) at (spath cs:north 0.25)
                  coordinate (n-yo) at (spath cs:north 0.5)
                  coordinate (n-sum-p) at (spath cs:north 0.75)
                  coordinate (n-E) at (spath cs:south 0.5)
                ;
                \path[spath/save global=n-horiz] (n-yo) to (n-E);
                \path[spath/save global=n-cup] (n-sum-p) to[in=-90,out=-90,looseness=3] coordinate[dot] (e0) (n-a0);
              \end{scope}

              \begin{scope}[canvas is zx plane at y=0, yscale=-1, xscale=-1,transform shape]
                \CreateRect{3}{2.5}
                \path
                  coordinate (s-a0) at (spath cs:north 0.25)
                  coordinate (s-yo) at (spath cs:north 0.5)
                  coordinate (s-sum-p) at (spath cs:north 0.75)
                  coordinate (s-E) at (spath cs:south 0.5)
                ;
                \path[spath/save global=s-horiz] (s-yo) to (s-E);
                \path[spath/save global=s-cup] (s-sum-p) to[in=-90,out=-90,looseness=3] coordinate[dot] (e1) (s-a0);
                \coordinate[dot] (a01) at (spath cs:s-cup 0.8);
              \end{scope}

              \path[spath/save=e0-e1] (e0.center) to coordinate[dot,label=right:$e_{01}$] (e01) (e1.center);
              \path[spath/save=e01-a01,bend right=10] (e01.center) to (a01.center);

              \path[spath/save=merid-a0] (n-a0) to (s-a0);
              \path[spath/save=merid-yo] (n-yo) to (s-yo);
              \path[spath/save=merid-sum-p] (n-sum-p) to (s-sum-p);
              \path[spath/save=merid-E] (n-E) to (s-E);

              \tikzset{
                spath/split at intersections with={s-cup}{e0-e1},
                spath/split at intersections with={s-cup}{e01-a01},
                spath/split at intersections with={s-horiz}{e01-a01},
                spath/split at intersections with={merid-yo}{n-cup},
                spath/split at intersections with={merid-sum-p}{n-cup},
                spath/split at intersections with={merid-sum-p}{n-horiz},
                spath/insert gaps after components={s-cup}{1.5pt},
                spath/insert gaps after components={s-horiz}{1.5pt},
                spath/insert gaps after components={merid-sum-p}{3pt},
                spath/insert gaps after components={merid-yo}{3pt}
              }

              \draw[
                spath/use=n-horiz,
                spath/use=s-horiz,
                spath/use=n-cup,
                spath/use=s-cup,
                spath/use=e0-e1,
                spath/use=e01-a01,
              ];
              \draw[
                very densely dotted,
                spath/use=merid-a0,
                spath/use=merid-yo,
                spath/use=merid-sum-p,
                spath/use=merid-E,
              ];

              \path
                node[anchor=west] at (n-E.east) {\scriptsize$E$};

              \path
                node[anchor=north] at (a01.south) {\scriptsize$a_{01}$}
                node[anchor=north west] at (e1.south east) {\scriptsize$e_1$}
                node[anchor=south west] at (e0.north east) {\scriptsize$e_0$};
            \end{tikzpicture}
          \]

          \item The displayed identity arrow $1_e\colon e\to e$ over $a\in A$ is the corresponding identity arrow $1_e\colon e\to e = e\circ \OplSum{p}\prn{\Yo{A}\prn{1_a}}$.
          \[
            \DiagramSquare{
              nw = e,
              ne = e,
              north/style = {double},
              sw = a,
              se = a,
              south/style = {double},
              west/style = {|->},
              east/style = {|->},
              height = 1.5cm,
            }
            \quad\coloneq\quad
            \begin{tikzpicture}[line width=.5pt, baseline=(current bounding box.center)]
              \begin{scope}[canvas is zx plane at y=1,yscale=-1,xscale=-1,transform shape]
                \CreateRect{3}{2.5}
                \path
                  coordinate (n-a) at (spath cs:north 0.25)
                  coordinate (n-yo) at (spath cs:north 0.5)
                  coordinate (n-sum-p) at (spath cs:north 0.75)
                  coordinate (n-E) at (spath cs:south 0.5)
                ;
                \path[spath/save global=n-horiz] (n-yo) to (n-E);
                \path[spath/save global=n-cup] (n-sum-p) to[in=-90,out=-90,looseness=3] coordinate[dot] (n-e) (n-a);
              \end{scope}
              \begin{scope}[canvas is zx plane at y=0,yscale=-1,xscale=-1,transform shape]
                \CreateRect{3}{2.5}
                \path
                  coordinate (s-a) at (spath cs:north 0.25)
                  coordinate (s-yo) at (spath cs:north 0.5)
                  coordinate (s-sum-p) at (spath cs:north 0.75)
                  coordinate (s-E) at (spath cs:south 0.5)
                ;
                \path[spath/save global=s-horiz] (s-yo) to (s-E);
                \path[spath/save global=s-cup] (s-sum-p) to[in=-90,out=-90,looseness=3] coordinate[dot] (s-e) (s-a);
              \end{scope}

              \path[spath/save=idn-e] (n-e) to (s-e);
              \path[spath/save=merid-a] (n-a) to (s-a);
              \path[spath/save=merid-yo] (n-yo) to (s-yo);
              \path[spath/save=merid-sum-p] (n-sum-p) to (s-sum-p);
              \path[spath/save=merid-E] (n-E) to (s-E);

              \tikzset{
                spath/split at intersections with={s-cup}{idn-e},
                spath/split at intersections with={merid-yo}{n-cup},
                spath/split at intersections with={merid-sum-p}{n-cup},
                spath/split at intersections with={merid-sum-p}{n-horiz},
                spath/insert gaps after components={s-cup}{1.5pt},
                spath/insert gaps after components={s-horiz}{1.5pt},
                spath/insert gaps after components={merid-sum-p}{3pt},
                spath/insert gaps after components={merid-yo}{3pt}
              }

              \draw[
                very densely dotted,
                spath/use=merid-a,
                spath/use=merid-yo,
                spath/use=merid-sum-p,
                spath/use=merid-E,
              ];

              \draw[
                spath/use=n-horiz,
                spath/use=s-horiz,
                spath/use=n-cup,
                spath/use=s-cup,
                spath/use=idn-e
              ];
              \path
                node[anchor=south west] at (n-e.north east) {\scriptsize$e$}
                node[anchor=north west] at (s-e.south east) {\scriptsize$e$}
                ;
            \end{tikzpicture}
          \]

          \item Given displayed arrows $e_{01}\colon e_0\to e_1$ and $e_{12}\colon e_1\to e_2$ over $a_{01}\colon a_0\to a_1$ and $a_{12}\colon a_1\to a_2$ respectively, the displayed composite $e_{12}\circ e_{01}\colon e_0\to e_2$ over $a_{12}\circ a_{01}$ is obtained as follows:
          \[
            \begin{tikzpicture}[diagram]
              \SpliceDiagramSquare<l/>{
                nw = e_0,
                ne = e_1,
                sw = a_0,
                se = a_1,
                south = a_{01},
                north = e_{01},
                west/style = {|->},
                east/style = {|->},
                height = 1.5cm,
              }
              \SpliceDiagramSquare<r/>{
                glue = west,
                glue target = l/,
                ne = e_2,
                se = a_2,
                north = e_{01},
                south = a_{01},
                east/style = {|->},
                height = 1.5cm,
              }
            \end{tikzpicture}
            \quad\coloneq\quad
            \begin{tikzpicture}[line width=.5pt, baseline=(current bounding box.center)]
              \begin{scope}[canvas is zx plane at y=3,yscale=-1,xscale=-1,transform shape]
                \CreateRect{3}{2.5}
                \path
                  coordinate (n-a0) at (spath cs:north 0.25)
                  coordinate (n-yo) at (spath cs:north 0.5)
                  coordinate (n-sum-p) at (spath cs:north 0.75)
                  coordinate (n-E) at (spath cs:south 0.5)
                ;
                \path[spath/save global=n-horiz] (n-yo) to (n-E);
                \path[spath/save global=n-cup] (n-sum-p) to[in=-90,out=-90,looseness=3] coordinate[dot] (e0) (n-a0);
              \end{scope}

              \begin{scope}[canvas is zx plane at y=0, yscale=-1, xscale=-1,transform shape]
                \CreateRect{3}{2.5}
                \path
                  coordinate (s-a0) at (spath cs:north 0.25)
                  coordinate (s-yo) at (spath cs:north 0.5)
                  coordinate (s-sum-p) at (spath cs:north 0.75)
                  coordinate (s-E) at (spath cs:south 0.5)
                ;
                \path[spath/save global=s-horiz] (s-yo) to (s-E);
                \path[spath/save global=s-cup] (s-sum-p) to[in=-90,out=-90,looseness=3] coordinate[dot] (e2) (s-a0);
                \path
                  coordinate[dot] (a01) at (spath cs:s-cup 0.95)
                  coordinate[dot] (a12) at (spath cs:s-cup 0.75);
              \end{scope}

              \path[spath/save=e0-e2] (e0.center) to (e2.center);

              \path
                coordinate[dot,label=right:$e_{01}$] (e01) at (spath cs:e0-e2 0.33)
                coordinate[dot,label=right:$e_{12}$] (e12) at (spath cs:e0-e2 0.66);

              \path[spath/save=e01-a01,bend right=10] (e01.center) to (a01.center);
              \path[spath/save=e12-a12,bend right=10] (e12.center) to (a12.center);

              \path[spath/save=merid-a0] (n-a0) to (s-a0);
              \path[spath/save=merid-yo] (n-yo) to (s-yo);
              \path[spath/save=merid-sum-p] (n-sum-p) to (s-sum-p);
              \path[spath/save=merid-E] (n-E) to (s-E);

              \tikzset{
                spath/split at intersections with={s-cup}{e0-e2},
                spath/split at intersections with={s-cup}{e01-a01},
                spath/split at intersections with={s-cup}{e12-a12},
                spath/split at intersections with={s-horiz}{e01-a01},
                spath/split at intersections with={s-horiz}{e12-a12},
                spath/split at intersections with={merid-yo}{n-cup},
                spath/split at intersections with={merid-sum-p}{n-cup},
                spath/split at intersections with={merid-sum-p}{n-horiz},
                spath/split at intersections with={merid-sum-p}{e01-a01},
                spath/insert gaps after components={s-cup}{1.5pt},
                spath/insert gaps after components={s-horiz}{1.5pt},
                spath/insert gaps after components={merid-sum-p}{3pt},
                spath/insert gaps after components={merid-yo}{3pt}
              }

              \draw[
                spath/use=n-horiz,
                spath/use=s-horiz,
                spath/use=n-cup,
                spath/use=s-cup,
                spath/use=e0-e2,
                spath/use=e01-a01,
                spath/use=e12-a12
              ];
              \draw[
                very densely dotted,
                spath/use=merid-a0,
                spath/use=merid-yo,
                spath/use=merid-sum-p,
                spath/use=merid-E,
              ];

              \path
                node[anchor=west] at (n-E.east) {\scriptsize$E$};

              \path
                node[anchor=north] at (a01.south) {\scriptsize$a_{01}$}
                node[anchor=north] at (a12.south) {\scriptsize$a_{12}$}
                node[anchor=north west] at (e2.south east) {\scriptsize$e_2$}
                node[anchor=south west] at (e0.north east) {\scriptsize$e_0$};
            \end{tikzpicture}
          \]
        \end{itemize}
        \noindent
        From the diagrammatic depictions above, the displayed associativity and unit laws are seen to follow immediately from the associativity and unit laws for composition of 3-cells in $\Gray$.
      \end{con}

      \begin{con}[Restriction]\label{con:opl-diag-restriction}
         For any fibred category $E\in\FIB{B}$ and fibration $p\colon A\to B$, let $e_1$ be a displayed object of $\OplDiag{p}\prn{E}$ over $a_1\in A$, and let $a_{01}\colon a_0\to a_1$ be an arrow in $A$. We define the \emph{restriction} of $e_1$ along $a_{01}$ to be the following displayed object $\Lift{\OplDiag{p}\prn{E}}{a_{01}}{e_1}$ over $a_0$:
         \[
          \Lift{\OplDiag{p}\prn{E}}{a_{01}}{e_1}
          \quad\coloneq\quad
          \begin{tikzpicture}[line width=.5pt,baseline=(current bounding box.center)]
            \begin{scope}[canvas is zx plane at y=0, yscale=-1, xscale=-1,transform shape]
              \CreateRect{3}{2.5}
              \path
                coordinate (a0) at (spath cs:north 0.25)
                coordinate (yo) at (spath cs:north 0.5)
                coordinate (sum-p) at (spath cs:north 0.75)
                coordinate (E) at (spath cs:south 0.5)
              ;
              \path[spath/save global=horiz] (yo) to (E);
              \path[spath/save global=cup] (sum-p) to[in=-90,out=-90,looseness=2.55] coordinate[dot] (e) (a0);
              \coordinate[dot] (a01) at (spath cs:cup 0.8);
            \end{scope}
            \draw[spath/use=horiz,spath/use=cup,];
            \path
              node[anchor=east] at (yo.west) {\scriptsize$\Yo{A}$}
              node[anchor=east] at (a0.west) {\scriptsize$a_0$}
              node[anchor=east] at (sum-p.west) {\scriptsize$\Sum{p}$}
              node[anchor=west] at (E.east) {\scriptsize$E$}
              node[anchor=north west] at (e.south east) {\scriptsize$e$}
              node[anchor=north] at (a01.south) {\scriptsize$a_{01}$};
          \end{tikzpicture}
         \]
      \end{con}

      \begin{obs}\label{obs:opl-diag-straightening}
        For any fibred category $E\in\FIB{B}$ and fibration $p\colon A\to B$, a displayed arrow $e_0\to e_2$ of $\OplDiag{p}\prn{E}$ over $a_{12}\circ a_{01}\colon a_0\to a_1\to a_2$
        \begin{equation*}
          \begin{tikzpicture}[diagram]
            \node (nw) {$e_0$};
            \node[below = 1.5cm of nw] (sw) {$a_0$};
            \node[right = of sw] (sc) {$a_1$};
            \node[right = of sc] (se) {$a_2$};
            \node[above = 1.5cm of se] (ne) {$e_2$};
            \draw[->] (sw) to node[below] {$a_{01}$} (sc);
            \draw[->] (sc) to node[below] {$a_{12}$} (se);
            \draw[|->] (nw) to (sw);
            \draw[|->] (ne) to (se);
            \draw[->] (nw) to (ne);
          \end{tikzpicture}
        \end{equation*}
        is by definition \emph{exactly} the same as a displayed arrow $e_0\to a_{12}^*e_2$ over $a_{01}\colon a_0\to a_1$
        \begin{equation*}
          \DiagramSquare{
            nw = e_0,
            ne = a_{12}^*e_2,
            sw = a_0,
            se = a_1,
            west/style = {|->},
            east/style = {|->},
            height = 1.5cm,
            south = a_{01},
          }
        \end{equation*}
        where $a_{12}^*e_2$ is the restriction of $e_2$ along $a_{12}\colon a_1\to a_2$ as in \autoref{con:opl-diag-restriction}.
      \end{obs}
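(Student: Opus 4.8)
The plan is to unfold both sides of the claimed identification down to raw data and observe that they are \emph{literally} the same, with no coherence isomorphisms intervening; the only substantive point is a strictness check. Intuitively the observation is an ``uncurrying'' inside $\OplDiag{p}\prn{E}$: a displayed arrow out of $e_0$ over a composite $a_{12}\circ a_{01}$ unbundles as one over the first factor $a_{01}$ whose displayed codomain has been restricted along the second factor $a_{12}$.

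First I would recall from \autoref{con:oplax-base-change} that a displayed arrow $e_0\to e_2$ of $\OplDiag{p}\prn{E}$ over a morphism $\phi\colon a_0\to a_2$ of $A$ is, \emph{by definition}, a $2$-cell $e_0\Rightarrow e_2\circ\OplSum{p}\prn{\Yo{A}\prn{\phi}}$ in the hom-category $\FIB{B}\prn{\OplSum{p}\prn{\Yo{A}\prn{a_0}},E}$. Instantiating $\phi=a_{12}\circ a_{01}$ gives the left-hand side of the observation. On the other side, \autoref{con:opl-diag-restriction} defines $a_{12}^*e_2$ to be the composite fibred functor $e_2\circ\OplSum{p}\prn{\Yo{A}\prn{a_{12}}}$, so a displayed arrow $e_0\to a_{12}^*e_2$ over $a_{01}$ is — again by \autoref{con:oplax-base-change} — a $2$-cell $e_0\Rightarrow\prn{e_2\circ\OplSum{p}\prn{\Yo{A}\prn{a_{12}}}}\circ\OplSum{p}\prn{\Yo{A}\prn{a_{01}}}$ in the \emph{same} hom-category $\FIB{B}\prn{\OplSum{p}\prn{\Yo{A}\prn{a_0}},E}$.

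The remaining (and crucial) step is to check that the two target $1$-cells agree on the nose, i.e.\ that
\[
  \OplSum{p}\prn{\Yo{A}\prn{a_{12}\circ a_{01}}}\;=\;\OplSum{p}\prn{\Yo{A}\prn{a_{12}}}\circ\OplSum{p}\prn{\Yo{A}\prn{a_{01}}}.
\]
This holds because the composite $\OplSum{p}\circ\Yo{A}\colon A\to\FIB{B}$ is a \emph{strict} $2$-functor: the action of $\Yo{A}$ on a morphism $a_{01}$ is post-composition $\prn{a_{01}\circ-}\colon A/a_0\to A/a_1$, which is strictly functorial by associativity and unitality of composition in $A$; and $\OplSum{p}$ acts on fibred functors merely by reinterpreting them as morphisms over $B$, which preserves composites and identities verbatim. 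Granting this equality, the left- and right-hand descriptions are the very same piece of data, so the observation holds as an honest equality and not merely a natural bijection.

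I do not expect any genuine obstacle; the one thing to be careful about is exactly the strictness bookkeeping above — one must confirm that the restriction of \autoref{con:opl-diag-restriction} was set up to be strictly, rather than merely pseudo-, functorial in $a_{12}$, which it is. In the surface-diagram calculus of \autoref{con:oplax-base-change}, this identity is witnessed simply by sliding the bead labelled $a_{12}$ off the cup $\OplSum{p}\prn{\Yo{A}\prn{a_0}}$ and merging it into the bead $e_2$.
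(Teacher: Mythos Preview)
Your proposal is correct and matches the paper's approach: the paper states the observation without proof, relying on the phrase ``by definition \emph{exactly} the same,'' and your unfolding of both sides together with the strictness check for $\OplSum{p}\circ\Yo{A}$ is precisely the justification that makes this phrase honest.
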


      \begin{lem}[A split cleaving]\label{lem:opl-diag-splitting}
        Let $E\in\FIB{B}$ be a fibred category over $B$, and let $p\colon A\rightarrowtriangle B$ be a fibration. We may equip $\OplDiag{p}\prn{E}$ with a \emph{split cleaving} with cartesian lifts chosen as follows:
         \begin{equation*}
          \DiagramSquare{
            nw/style = pullback,
            nw = \Lift{\OplDiag{p}\prn{E}}{a_{01}}{e_1},
            ne = e_1,
            north = \LiftArr{\OplDiag{p}\prn{E}}{a_{01}}{e_1},
            sw = a_0,
            se = a_1,
            south = a_{01},
            west/style = {|->},
            east/style = {|->},
            height = 1.5cm,
          }
          \quad\coloneq\quad
          \begin{tikzpicture}[line width=.5pt,baseline=(current bounding box.center)]
            \begin{scope}[canvas is zx plane at y=1,yscale=-1,xscale=-1,transform shape]
              \CreateRect{3}{2.5}
              \path
                coordinate (n-a0) at (spath cs:north 0.25)
                coordinate (n-yo) at (spath cs:north 0.5)
                coordinate (n-sum-p) at (spath cs:north 0.75)
                coordinate (n-E) at (spath cs:south 0.5)
              ;
              \path[spath/save global=n-horiz] (n-yo) to (n-E);
              \path[spath/save global=n-cup] (n-sum-p) to[in=-90,out=-90,looseness=3] coordinate[dot] (n-e) (n-a0);
              \coordinate[dot] (n-a01) at (spath cs:n-cup 0.8);
            \end{scope}

            \begin{scope}[canvas is zx plane at y=0, yscale=-1, xscale=-1,transform shape]
              \CreateRect{3}{2.5}
              \path
                coordinate (s-a0) at (spath cs:north 0.25)
                coordinate (s-yo) at (spath cs:north 0.5)
                coordinate (s-sum-p) at (spath cs:north 0.75)
                coordinate (s-E) at (spath cs:south 0.5)
              ;
              \path[spath/save global=s-horiz] (s-yo) to (s-E);
              \path[spath/save global=s-cup] (s-sum-p) to[in=-90,out=-90,looseness=3] coordinate[dot] (s-e) (s-a0);
              \coordinate[dot] (s-a01) at (spath cs:s-cup 0.8);
            \end{scope}

            \path[spath/save=idn-e] (n-e) to (s-e);
            \path[spath/save=idn-a01] (n-a01) to (s-a01);

            \path[spath/save=merid-a0] (n-a0) to (s-a0);
            \path[spath/save=merid-yo] (n-yo) to (s-yo);
            \path[spath/save=merid-sum-p] (n-sum-p) to (s-sum-p);
            \path[spath/save=merid-E] (n-E) to (s-E);

            \tikzset{
              spath/split at intersections with={s-cup}{idn-e},
              spath/split at intersections with={s-cup}{idn-a01},
              spath/split at intersections with={s-horiz}{idn-a01},
              spath/split at intersections with={merid-yo}{n-cup},
              spath/split at intersections with={merid-sum-p}{n-cup},
              spath/split at intersections with={merid-sum-p}{n-horiz},
              spath/insert gaps after components={s-cup}{1.5pt},
              spath/insert gaps after components={s-horiz}{1.5pt},
              spath/insert gaps after components={merid-sum-p}{3pt},
              spath/insert gaps after components={merid-yo}{3pt}
            }

            \draw[
              spath/use=idn-e,
              spath/use=idn-a01,
              spath/use=n-horiz,
              spath/use=s-horiz,
              spath/use=n-cup,
              spath/use=s-cup,
            ];
            \draw[
              very densely dotted,
              spath/use=merid-a0,
              spath/use=merid-yo,
              spath/use=merid-sum-p,
              spath/use=merid-E,
            ];

            \path
              node[anchor=east] at (n-yo.west) {\scriptsize$\Yo{A}$}
              node[anchor=east] at (n-a0.west) {\scriptsize$a_0$}
              node[anchor=east] at (n-sum-p.west) {\scriptsize$\Sum{p}$}
              node[anchor=west] at (n-E.east) {\scriptsize$E$};

            \path
              node[anchor=north west] at (n-a01.south east) {\scriptsize$a_{01}$}
              node[anchor=south west] at (n-e.north east) {\scriptsize$e_1$};
          \end{tikzpicture}
        \end{equation*}
      \end{lem}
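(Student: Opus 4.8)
The plan is to read off the cartesian universal property of $\LiftArr{\OplDiag{p}\prn{E}}{a_{01}}{e_1}$ directly from \autoref{obs:opl-diag-straightening}, and then obtain splitness almost for free from the strictness of the composite $\OplSum{p}\circ\Yo{A}$.

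First I would check that the displayed arrow $\LiftArr{\OplDiag{p}\prn{E}}{a_{01}}{e_1}\colon \Lift{\OplDiag{p}\prn{E}}{a_{01}}{e_1}\to e_1$ over $a_{01}$ drawn above is well-posed: by \autoref{con:opl-diag-restriction} the displayed object $\Lift{\OplDiag{p}\prn{E}}{a_{01}}{e_1}$ is \emph{literally} the composite fibred functor $e_1\circ\OplSum{p}\prn{\Yo{A}\prn{a_{01}}}$, so by \autoref{con:oplax-base-change} a displayed arrow from it to $e_1$ over $a_{01}$ is precisely a 2-cell $e_1\circ\OplSum{p}\prn{\Yo{A}\prn{a_{01}}}\Rightarrow e_1\circ\OplSum{p}\prn{\Yo{A}\prn{a_{01}}}$ in $\FIB{B}\prn{\OplSum{p}\prn{\Yo{A}\prn{a_0}},E}$, and we take the identity --- exactly the 3-cell pictured (with no $e_{01}$-dot).

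Next, for cartesianness, recall that a displayed arrow $\chi\colon e'\to e_1$ over $a_{01}$ is cartesian iff for every displayed arrow $g\colon e_0\to e_1$ over $h\colon a'\to a_1$ and every factorisation $h = a_{01}\circ k$ in $A$ there is a unique displayed arrow $\hat g\colon e_0\to e'$ over $k$ with $\chi\circ\hat g = g$. I would fix such a $g$ and factorisation and apply \autoref{obs:opl-diag-straightening} to it: this identifies the set of displayed arrows $e_0\to e_1$ over $h=a_{01}\circ k$ with the set of displayed arrows $e_0\to\Lift{\OplDiag{p}\prn{E}}{a_{01}}{e_1}$ over $k$ --- not merely a bijection, but an \emph{equality} of hom-sets. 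Write $\bar g$ for the displayed arrow over $k$ corresponding to $g$. Unwinding the composition-of-3-cells formula of \autoref{con:oplax-base-change}, post-composing $\bar g$ with the identity 3-cell $\LiftArr{\OplDiag{p}\prn{E}}{a_{01}}{e_1}$ reduces to whiskering an identity and re-associating, and hence returns $g$; since the identification of \autoref{obs:opl-diag-straightening} is an equality, $\bar g$ is the \emph{unique} such arrow over $k$. This is precisely the cartesian universal property, so $\LiftArr{\OplDiag{p}\prn{E}}{a_{01}}{e_1}$ is cartesian and $\OplDiag{p}\prn{E}\to A$ is in particular a fibration.

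Finally I would verify that this cleaving is split. As $\OplSum{p}$ is a 2-functor (a restriction of postcomposition with $p$) and $\Yo{A}\colon A\hookrightarrow\FIB{A}$ is strictly functorial (postcomposition in $A$ being strictly associative and unital), the composite $\OplSum{p}\circ\Yo{A}\colon A\to\FIB{B}$ is a strict functor. Hence $\Lift{\OplDiag{p}\prn{E}}{1_a}{e} = e\circ\OplSum{p}\prn{\Yo{A}\prn{1_a}} = e$ with $\LiftArr{\OplDiag{p}\prn{E}}{1_a}{e} = 1_e$, and $\Lift{\OplDiag{p}\prn{E}}{a_{12}\circ a_{01}}{e_2} = e_2\circ\OplSum{p}\prn{\Yo{A}\prn{a_{12}}}\circ\OplSum{p}\prn{\Yo{A}\prn{a_{01}}} = \Lift{\OplDiag{p}\prn{E}}{a_{01}}{\prn{\Lift{\OplDiag{p}\prn{E}}{a_{12}}{e_2}}}$, while the cartesian arrow over $a_{12}\circ a_{01}$ (an identity 3-cell) is the composite of those over $a_{12}$ and over $a_{01}$ (a composite of identity 3-cells). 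The one step that is not immediate is the one flagged above --- matching the $\Gray$-composition formula of \autoref{con:oplax-base-change} against the reindexing equality of \autoref{obs:opl-diag-straightening} so as to confirm that ``compose with $\LiftArr{\OplDiag{p}\prn{E}}{a_{01}}{e_1}$'' inverts the reindexing --- and once that bookkeeping is pinned down, both cartesianness and the splitting follow.
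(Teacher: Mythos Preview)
Your proposal is correct and follows essentially the same approach as the paper: both reduce cartesianness of the chosen lift to \autoref{obs:opl-diag-straightening}, noting that a displayed arrow $e_{-1}\to e_1$ over $a_{01}\circ a_{-10}$ is \emph{literally the same datum} as a displayed arrow $e_{-1}\to a_{01}^*e_1$ over $a_{-10}$, and that composing with the identity 3-cell $\LiftArr{\OplDiag{p}\prn{E}}{a_{01}}{e_1}$ is the identity on this hom-set. Your write-up is more careful than the paper's on two counts --- you spell out the well-posedness of the identity-3-cell lift and you explicitly verify the split equations from the strict functoriality of $\OplSum{p}\circ\Yo{A}$ --- whereas the paper's proof treats splitness as evident and dispatches cartesianness in a single sentence.
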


      \noindent In other words, the cartesian lift is given by an identity 2-cell in $\FIB{B}\prn{\OplSum{p}\prn{\Yo{A}\prn{a_0}},E}$ of the restriction $\Lift{\OplDiag{p}{E}}{a_{01}}{e_1}$.

      \begin{proof}
        To check that $\LiftArr{\OplDiag{p}\prn{E}}{a_{01}}{e}\colon \Lift{\OplDiag{p}\prn{E}}{a_{01}}{e} \to e$ as-defined is cartesian, we fix an arrow $a_{-10}\colon a_{-1}\to a_0$ and a displayed arrow $e_{-11}\colon e_{-1}\to e$ over $a_{01}\circ a_{-10}$, and we need to find a unique factorisation as depicted below:
        \begin{equation*}
          \begin{tikzpicture}[diagram]
            \SpliceDiagramSquare<l/>{
              nw = e_{-1},
              ne = \Lift{\OplDiag{p}\prn{E}}{a_{01}}{e},
              north = \exists!h,
              sw = a_{-1},
              se = a_0,
              south = a_{-10},
              north/style = {->,exists},
              north/node/style = upright desc,
              ne/style = pullback,
              height = 1.5cm,
              width = 2.5cm,
              west/style = {|->},
              east/style = {|->},
            }
            \SpliceDiagramSquare<r/>{
              glue = west,
              glue target = l/,
              width = 2.5cm,
              height = 1.5cm,
              north = \LiftArr{\OplDiag{p}\prn{E}}{a_{01}}{e},
              north/node/style = upright desc,
              ne = e,
              se = a_1,
              south = a_{01},
              east/style = {|->},
            }
            \draw[->,bend left=35] (l/nw) to node[above] {$e_{-11}$} (r/ne);
          \end{tikzpicture}
        \end{equation*}

        In fact, the displayed arrow $h\colon e_{-1} \to a_{01}^*e$ over $a_{-10}$ can be defined to have the same underlying 3-cell as $e_{-11}$ following \autoref{obs:opl-diag-straightening}, so we are done.
      \end{proof}

      \begin{cor}
        Let $E\in\FIB{B}$ be a fibred category over $B$, and let $p\colon A\rightarrowtriangle B$ be a fibration. In $\OplDiag{p}\prn{E}$, a displayed arrow $e_{01}\colon e_0\to e_1$ over $a_{01}\colon a_0\to a_1$ is \emph{cartesian} if and if the corresponding cell $e_{01}\colon e_0\to e_1\circ \OplSum{p}\prn{\Yo{A}\prn{a_{01}}}$ is invertible in $\FIB{B}\prn{\OplSum{p}\prn{\Yo{A}\prn{a_0}},E}$.
      \end{cor}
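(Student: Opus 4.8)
The plan is to read this characterisation off the explicit split cleaving of \autoref{lem:opl-diag-splitting}, using the standard fact that in a cloven fibration a displayed arrow is cartesian exactly when the canonical vertical comparison into the chosen cartesian lift over the same base arrow is invertible. Concretely, given a displayed arrow $e_{01}\colon e_0\to e_1$ over $a_{01}\colon a_0\to a_1$, factoring it through the chosen cartesian lift $\LiftArr{\OplDiag{p}\prn{E}}{a_{01}}{e_1}$ produces a unique vertical $h\colon e_0\to \Lift{\OplDiag{p}\prn{E}}{a_{01}}{e_1}$ over $1_{a_0}$, and $e_{01}$ is cartesian if and only if $h$ is an isomorphism: forwards because any two cartesian arrows over $a_{01}$ with codomain $e_1$ differ by a vertical isomorphism, and backwards because a cartesian arrow precomposed with a vertical isomorphism remains cartesian. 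It therefore suffices to identify $h$ and to determine when it is invertible.

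First I would identify $h$ with $e_{01}$. By \autoref{lem:opl-diag-splitting} the chosen cartesian lift $\LiftArr{\OplDiag{p}\prn{E}}{a_{01}}{e_1}$ is carried by an identity $2$-cell in $\FIB{B}\prn{\OplSum{p}\prn{\Yo{A}\prn{a_0}},E}$; hence by \autoref{obs:opl-diag-straightening} --- specialised to $a_{-10}=1_{a_0}$, under which a displayed arrow $e_0\to e_1$ over $a_{01}\circ 1_{a_0}$ is literally the same datum as a displayed arrow $e_0\to \Lift{\OplDiag{p}\prn{E}}{a_{01}}{e_1}$ over $1_{a_0}$ --- the comparison $h$ is carried by the very $2$-cell $e_0\to e_1\circ \OplSum{p}\prn{\Yo{A}\prn{a_{01}}}$ underlying $e_{01}$, recalling from \autoref{con:opl-diag-restriction} that $\Lift{\OplDiag{p}\prn{E}}{a_{01}}{e_1}=e_1\circ \OplSum{p}\prn{\Yo{A}\prn{a_{01}}}$. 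Next I would observe that by \autoref{con:oplax-base-change} the fibre of $\OplDiag{p}\prn{E}$ over $a_0$ is precisely the hom-category $\FIB{B}\prn{\OplSum{p}\prn{\Yo{A}\prn{a_0}},E}$: a vertical displayed arrow $e_0\to e_0'$ is a $2$-cell $e_0\to e_0'\circ \OplSum{p}\prn{\Yo{A}\prn{1_{a_0}}}=e_0'$, and vertical composition agrees with $2$-cell composition since whiskering by an identity is trivial. Consequently $h$ is a vertical isomorphism exactly when $e_{01}$ is invertible as a $2$-cell in $\FIB{B}\prn{\OplSum{p}\prn{\Yo{A}\prn{a_0}},E}$, which is the claim.

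I do not expect a genuine obstacle: the substance has already been discharged in \autoref{lem:opl-diag-splitting} and \autoref{obs:opl-diag-straightening}, and what remains is bookkeeping. The only points deserving a moment's care are that the general ``cartesian iff comparison invertible'' principle applies verbatim whether or not the cleaving is split (splitness plays no role in its proof), and that the identifications of underlying $2$-cells across \autoref{con:oplax-base-change}, \autoref{con:opl-diag-restriction}, and \autoref{obs:opl-diag-straightening} are mutually compatible --- both of which are immediate from the definitions.
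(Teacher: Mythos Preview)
Your proposal is correct and is precisely the argument the paper has in mind: the corollary is stated without proof immediately after \autoref{lem:opl-diag-splitting} and \autoref{obs:opl-diag-straightening}, and your write-up simply spells out how these two results combine via the standard ``cartesian iff vertical comparison is invertible'' principle. There is nothing to add or correct.
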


      The 2-functor $\OplDiag{p}\prn{E}_\bullet\colon A\Op\to\CAT$ corresponding to the split cleaving specified in \autoref{lem:opl-diag-splitting} is precisely the one foreshadowed in \S~\ref{sec:intro:oplax-sum-and-base-change}:
      \[
        \OplDiag{p}\prn{E}_\bullet = \FIB{B}\prn{\OplSum{p}\prn{\Yo{A}\prn{-}},E}
      \]
    \end{xsect}

    \begin{xsect}{Action of oplax base change on 1-~and~2-cells}
      Let $E$ and $F$ be two fibred categories over $B$. In this section, we construct the local hom functors $\prn{\OplDiag{p}}_{E,F}\colon \FIB{B}\prn{E,F}\to \FIB{A}\prn{\OplDiag{p}\prn{E},\OplDiag{p}\prn{F}}$.

      \begin{con}[Action of local hom functor on 0-cells]\label{con:oplax-base-change:local-hom-functor}
        Let $f\colon E\to F$ be a fibred functor over $B$; we define a displayed functor $\prn{\OplDiag{p}}_{E,F}\prn{f}\colon \OplDiag{p}\prn{E}\to \OplDiag{p}\prn{F}$ as follows:
        \begin{itemize}[label=$\triangleright$]
          \item A displayed object $e\in\OplDiag{p}\prn{E}$ over $a\in A$ is sent to the following composite fibred functor:
          \[
            \prn{\OplDiag{p}}_{E,F}\prn{f}\prn{e}
            \quad\coloneq\quad
            \begin{tikzpicture}[line width=.5pt,baseline=(current bounding box.center)]
              \begin{scope}[canvas is zx plane at y=0, yscale=-1, xscale=-1,transform shape]
                \CreateRect{3}{2.5}
                \path
                  coordinate (a) at (spath cs:north 0.25)
                  coordinate (yo) at (spath cs:north 0.5)
                  coordinate (sum-p) at (spath cs:north 0.75)
                  coordinate (F) at (spath cs:south 0.5)
                ;
                \path[spath/save global=horiz] (yo) to (F);
                \path[spath/save global=cup] (sum-p) to[in=-90,out=-90,looseness=2.55] coordinate[dot] (e) (a);
                \coordinate[dot] (f) at (spath cs:horiz 0.8);
              \end{scope}
              \draw[spath/use=horiz,spath/use=cup,];
              \path
                node[anchor=east] at (yo.west) {\scriptsize$\Yo{A}$}
                node[anchor=east] at (a.west) {\scriptsize$a$}
                node[anchor=east] at (sum-p.west) {\scriptsize$\Sum{p}$}
                node[anchor=west] at (F.east) {\scriptsize$F$}
                node[anchor=north west] at (e.south east) {\scriptsize$e$}
                node[anchor=north] at (f.south) {\scriptsize$f$};
            \end{tikzpicture}
          \]
          \item A displayed arrow $e_{01}\colon e_0\to e_1$ in $\OplDiag{p}\prn{E}$ over $a_{01}\colon a_0\to a_1$ is sent to its whiskering with $f$ as shown below:
          \[
            \prn{\OplDiag{p}}_{E,F}\prn{f}\prn{e_{01}}
            \quad\coloneq\quad
            \begin{tikzpicture}[line width=.5pt, baseline=(current bounding box.center)]
              \begin{scope}[canvas is zx plane at y=2,yscale=-1,xscale=-1,transform shape]
                \CreateRect{3}{2.5}
                \path
                  coordinate (n-a0) at (spath cs:north 0.25)
                  coordinate (n-yo) at (spath cs:north 0.5)
                  coordinate (n-sum-p) at (spath cs:north 0.75)
                  coordinate (n-F) at (spath cs:south 0.5)
                ;
                \path[spath/save global=n-horiz] (n-yo) to (n-F);
                \path[spath/save global=n-cup] (n-sum-p) to[in=-90,out=-90,looseness=2] coordinate[dot] (e0) (n-a0);
                \coordinate[dot] (n-f) at (spath cs:n-horiz 0.8);
              \end{scope}

              \begin{scope}[canvas is zx plane at y=0, yscale=-1, xscale=-1,transform shape]
                \CreateRect{3}{2.5}
                \path
                  coordinate (s-a0) at (spath cs:north 0.25)
                  coordinate (s-yo) at (spath cs:north 0.5)
                  coordinate (s-sum-p) at (spath cs:north 0.75)
                  coordinate (s-F) at (spath cs:south 0.5)
                ;
                \path[spath/save global=s-horiz] (s-yo) to (s-F);
                \path[spath/save global=s-cup] (s-sum-p) to[in=-90,out=-90,looseness=2] coordinate[dot] (e1) (s-a0);
                \coordinate[dot] (a01) at (spath cs:s-cup 0.8);
                \coordinate[dot] (s-f) at (spath cs:s-horiz 0.8);
              \end{scope}

              \path[spath/save=e0-e1] (e0.center) to coordinate[dot,label=right:$e_{01}$] (e01) (e1.center);
              \path[spath/save=e01-a01,bend right=10] (e01.center) to (a01.center);

              \path[spath/save=merid-a0] (n-a0) to (s-a0);
              \path[spath/save=merid-yo] (n-yo) to (s-yo);
              \path[spath/save=merid-sum-p] (n-sum-p) to (s-sum-p);
              \path[spath/save=merid-F] (n-F) to (s-F);
              \path[spath/save=merid-f] (n-f) to (s-f);

              \tikzset{
                spath/split at intersections with={s-cup}{e0-e1},
                spath/split at intersections with={s-cup}{e01-a01},
                spath/split at intersections with={s-horiz}{e01-a01},
                spath/split at intersections with={merid-yo}{n-cup},
                spath/split at intersections with={merid-sum-p}{n-cup},
                spath/split at intersections with={merid-sum-p}{n-horiz},
                spath/insert gaps after components={s-cup}{1.5pt},
                spath/insert gaps after components={s-horiz}{1.5pt},
                spath/insert gaps after components={merid-sum-p}{3pt},
                spath/insert gaps after components={merid-yo}{3pt}
              }

              \draw[
                spath/use=n-horiz,
                spath/use=s-horiz,
                spath/use=n-cup,
                spath/use=s-cup,
                spath/use=e0-e1,
                spath/use=e01-a01,
                spath/use=merid-f,
              ];
              \draw[
                very densely dotted,
                spath/use=merid-a0,
                spath/use=merid-yo,
                spath/use=merid-sum-p,
                spath/use=merid-F,
              ];

              \path
                node[anchor=east] at (n-yo.west) {\scriptsize$\Yo{A}$}
                node[anchor=east] at (n-a0.west) {\scriptsize$a_0$}
                node[anchor=east] at (n-sum-p.west) {\scriptsize$\Sum{p}$}
                node[anchor=west] at (n-F.east) {\scriptsize$F$}
                node[anchor=south] at (n-f.north) {\scriptsize$f$};

              \path
                node[anchor=north] at (a01.south) {\scriptsize$a_{01}$}
                node[anchor=north west] at (e1.south east) {\scriptsize$e_1$}
                node[anchor=south west] at (e0.north east) {\scriptsize$e_0$};
            \end{tikzpicture}
          \]
        \end{itemize}
      \end{con}

      \begin{obs}
        The displayed functor $\prn{\OplDiag{p}}_{E,F}\prn{f}\colon \OplDiag{p}\prn{E}\to \OplDiag{p}\prn{F}$ is split fibred in the sense that it preserves the cartesian lifts specified in \autoref{lem:opl-diag-splitting} on the nose.
      \end{obs}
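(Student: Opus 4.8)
The plan is to reduce the claim to two bookkeeping facts about how the local hom functor $\prn{\OplDiag{p}}_{E,F}\prn{f}$ of \autoref{con:oplax-base-change:local-hom-functor} interacts with the restriction construction of \autoref{con:opl-diag-restriction} and the split cleaving of \autoref{lem:opl-diag-splitting}. Since $\prn{\OplDiag{p}}_{E,F}\prn{f}\prn{e_1} = f\circ e_1$ by construction, it suffices to show that $\prn{\OplDiag{p}}_{E,F}\prn{f}$ sends each chosen cartesian lift $\LiftArr{\OplDiag{p}\prn{E}}{a_{01}}{e_1}$ to the chosen cartesian lift $\LiftArr{\OplDiag{p}\prn{F}}{a_{01}}{f\circ e_1}$ in $\OplDiag{p}\prn{F}$.

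First I would check that the functor matches chosen domains on the nose. The displayed object $\Lift{\OplDiag{p}\prn{E}}{a_{01}}{e_1}$ is, by \autoref{con:opl-diag-restriction}, the composite fibred functor obtained by precomposing $e_1$ with $\OplSum{p}\prn{\Yo{A}\prn{a_{01}}}$, and $\prn{\OplDiag{p}}_{E,F}\prn{f}$ acts on displayed objects by postcomposition with $f$, so it sends this object to $f\circ\prn{e_1\circ\OplSum{p}\prn{\Yo{A}\prn{a_{01}}}}$. Because composition of $1$-cells in $\Gray$ (equivalently, composition of fibred functors over $B$) is strictly associative, this equals $\prn{f\circ e_1}\circ\OplSum{p}\prn{\Yo{A}\prn{a_{01}}}$, which is precisely the chosen restriction $\Lift{\OplDiag{p}\prn{F}}{a_{01}}{f\circ e_1}$. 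Diagrammatically this is just the equality of the two ways of reading the relevant surface diagram in $\Gray$.

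Then I would treat the arrow component. By \autoref{lem:opl-diag-splitting} the cartesian lift $\LiftArr{\OplDiag{p}\prn{E}}{a_{01}}{e_1}$ is the identity $2$-cell on $\Lift{\OplDiag{p}\prn{E}}{a_{01}}{e_1}$ in $\FIB{B}\prn{\OplSum{p}\prn{\Yo{A}\prn{a_0}},E}$, and $\prn{\OplDiag{p}}_{E,F}\prn{f}$ acts on displayed arrows by whiskering on the left with $f$ (\autoref{con:oplax-base-change:local-hom-functor}); whiskering an identity $2$-cell by a $1$-cell again yields an identity $2$-cell. Combined with the identification of underlying objects above, the image $\prn{\OplDiag{p}}_{E,F}\prn{f}\prn{\LiftArr{\OplDiag{p}\prn{E}}{a_{01}}{e_1}}$ is exactly the identity $2$-cell on $\Lift{\OplDiag{p}\prn{F}}{a_{01}}{f\circ e_1}$, that is, the chosen cartesian lift $\LiftArr{\OplDiag{p}\prn{F}}{a_{01}}{f\circ e_1}$. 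Hence the chosen cartesian lifts are preserved strictly, so $\prn{\OplDiag{p}}_{E,F}\prn{f}$ is split fibred.

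I do not expect a real obstacle: the whole argument is the observation that precomposition with $\OplSum{p}\prn{\Yo{A}\prn{a_{01}}}$ and postcomposition with $f$ commute \emph{strictly} (associativity of $1$-cell composition in $\Gray$) and that whiskering preserves identity $2$-cells, both of which are immediate from the surface-diagram presentation, with no coherence data to track. The only mild care needed is to confirm that these are genuine equalities rather than canonical isomorphisms, which is precisely where the strictness of composition in $\Gray$ (as opposed to a general bicategory) is used.
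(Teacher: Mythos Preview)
Your proposal is correct and is exactly the argument the paper has in mind: the observation is stated without proof in the paper, and your unpacking—associativity of $1$-cell composition to match domains, plus whiskering-preserves-identities to match the chosen lift—is precisely the immediate verification being left to the reader.
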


      \begin{con}[Action of local hom functor on 1-cells]
        Let $f_{01}\colon f_0\to f_1$ be a vertical natural transformation of fibred functors $f_0,f_1\colon E\to F$ over $B$. The component of the functorial action
        \[\prn{\OplDiag{p}}_{E,F}^{f_0,f_1}\prn{f_{01}}\colon \prn{\OplDiag{p}}_{E,F}\prn{f_0}\to \prn{\OplDiag{p}}_{E,F}\prn{f_1}\] at $e\in \OplDiag{p}\prn{E}$ over $a\in A$ is the following whiskering:
        \[
          \prn{\OplDiag{p}}_{E,F}^{f_0,f_1}\prn{f_{01}}_e
          \quad\coloneq\quad
          \begin{tikzpicture}[line width=.5pt, baseline=(current bounding box.center)]
            \begin{scope}[canvas is zx plane at y=1,yscale=-1,xscale=-1,transform shape]
              \CreateRect{3}{2.5}
              \path
                coordinate (n-a) at (spath cs:north 0.25)
                coordinate (n-yo) at (spath cs:north 0.5)
                coordinate (n-sum-p) at (spath cs:north 0.75)
                coordinate (n-F) at (spath cs:south 0.5)
              ;
              \path[spath/save global=n-horiz] (n-yo) to (n-F);
              \path[spath/save global=n-cup] (n-sum-p) to[in=-90,out=-90,looseness=2.5] coordinate[dot] (n-e) (n-a);
              \coordinate[dot] (n-f) at (spath cs:n-horiz 0.7);
            \end{scope}
            \begin{scope}[canvas is zx plane at y=0,yscale=-1,xscale=-1,transform shape]
              \CreateRect{3}{2.5}
              \path
                coordinate (s-a) at (spath cs:north 0.25)
                coordinate (s-yo) at (spath cs:north 0.5)
                coordinate (s-sum-p) at (spath cs:north 0.75)
                coordinate (s-F) at (spath cs:south 0.5)
              ;
              \path[spath/save global=s-horiz] (s-yo) to (s-F);
              \path[spath/save global=s-cup] (s-sum-p) to[in=-90,out=-90,looseness=2.5] coordinate[dot] (s-e) (s-a);
              \coordinate[dot] (s-f) at (spath cs:s-horiz 0.7);
            \end{scope}

            \path[spath/save=merid-e] (n-e) to (s-e);
            \path[spath/save=merid-a] (n-a) to (s-a);
            \path[spath/save=merid-yo] (n-yo) to (s-yo);
            \path[spath/save=merid-sum-p] (n-sum-p) to (s-sum-p);
            \path[spath/save=merid-F] (n-F) to (s-F);
            \path[spath/save=merid-f] (n-f) to coordinate[dot] (f01) (s-f);

            \tikzset{
              spath/split at intersections with={s-cup}{merid-e},
              spath/split at intersections with={merid-yo}{n-cup},
              spath/split at intersections with={merid-sum-p}{n-cup},
              spath/split at intersections with={merid-sum-p}{n-horiz},
              spath/insert gaps after components={s-cup}{1.5pt},
              spath/insert gaps after components={s-horiz}{1.5pt},
              spath/insert gaps after components={merid-sum-p}{3pt},
              spath/insert gaps after components={merid-yo}{3pt}
            }

            \draw[
              very densely dotted,
              spath/use=merid-a,
              spath/use=merid-yo,
              spath/use=merid-sum-p,
              spath/use=merid-F,
            ];

            \draw[
              spath/use=n-horiz,
              spath/use=s-horiz,
              spath/use=n-cup,
              spath/use=s-cup,
              spath/use=merid-e,
              spath/use=merid-f
            ];
            \path
              node[anchor=east] at (n-yo.west) {\scriptsize$\Yo{A}$}
              node[anchor=east] at (n-a.west) {\scriptsize$a$}
              node[anchor=east] at (n-sum-p.west) {\scriptsize$\Sum{p}$}
              node[anchor=west] at (n-F.east) {\scriptsize$F$}
              node[anchor=south west] at (n-e.north east) {\scriptsize$e$}
              node[anchor=south] at (n-f.north) {\scriptsize$f_0$}
              node[anchor=north] at (s-f.south) {\scriptsize$f_1$}
              node[anchor=west] at (f01.east) {$f_{01}$}
              ;
          \end{tikzpicture}
        \]

        We further verify the following naturality square for each $e_{01}\colon e_0\to e_1$ in $\OplDiag{p}\prn{E}$ over $a_{01}\colon a_0\to a_1$:
        \[
          \DiagramSquare{
            nw = \prn{\OplDiag{p}}_{E,F}\prn{f_0}\prn{e_0},
            ne = \prn{\OplDiag{p}}_{E,F}\prn{f_1}\prn{e_0},
            sw = \prn{\OplDiag{p}}_{E,F}\prn{f_0}\prn{e_1},
            se = \prn{\OplDiag{p}}_{E,F}\prn{f_1}\prn{e_1},
            north = \prn{\OplDiag{p}}_{E,F}^{f_0,f_1}\prn{f_{01}}_{e_0},
            south = \prn{\OplDiag{p}}_{E,F}^{f_0,f_1}\prn{f_{01}}_{e_1},
            west = \prn{\OplDiag{p}}_{E,F}\prn{f_0}\prn{e_{01}},
            east = \prn{\OplDiag{p}}_{E,F}\prn{f_1}\prn{e_{01}},
            width = 7cm,
          }
        \]

        That this square commutes follows from the interchange of the 3-cells $f_{01}$ and $e_{01}$ in the surface diagrams representing the two composites:
        \[
          \begin{tikzpicture}[line width=.5pt, baseline=(current bounding box.center)]
            \begin{scope}[canvas is zx plane at y=2,yscale=-1,xscale=-1,transform shape]
              \CreateRect{3}{2.5}
              \path
                coordinate (n-a0) at (spath cs:north 0.25)
                coordinate (n-yo) at (spath cs:north 0.5)
                coordinate (n-sum-p) at (spath cs:north 0.75)
                coordinate (n-F) at (spath cs:south 0.5)
              ;
              \path[spath/save global=n-horiz] (n-yo) to (n-F);
              \path[spath/save global=n-cup] (n-sum-p) to[in=-90,out=-90,looseness=2] coordinate[dot] (e0) (n-a0);
              \coordinate[dot] (n-f) at (spath cs:n-horiz 0.8);
            \end{scope}

            \begin{scope}[canvas is zx plane at y=0, yscale=-1, xscale=-1,transform shape]
              \CreateRect{3}{2.5}
              \path
                coordinate (s-a0) at (spath cs:north 0.25)
                coordinate (s-yo) at (spath cs:north 0.5)
                coordinate (s-sum-p) at (spath cs:north 0.75)
                coordinate (s-F) at (spath cs:south 0.5)
              ;
              \path[spath/save global=s-horiz] (s-yo) to (s-F);
              \path[spath/save global=s-cup] (s-sum-p) to[in=-90,out=-90,looseness=2] coordinate[dot] (e1) (s-a0);
              \coordinate[dot] (a01) at (spath cs:s-cup 0.8);
              \coordinate[dot] (s-f) at (spath cs:s-horiz 0.8);
            \end{scope}

            \path[spath/save=e0-e1] (e0.center) to coordinate[dot,near end,label=right:$e_{01}$] (e01) (e1.center);
            \path (n-f.center) to coordinate[dot,near start,label=left:$f_{01}$] (f01) (s-f.center);
            \path[spath/save=e01-a01,bend right=10] (e01.center) to (a01.center);

            \path[spath/save=merid-a0] (n-a0) to (s-a0);
            \path[spath/save=merid-yo] (n-yo) to (s-yo);
            \path[spath/save=merid-sum-p] (n-sum-p) to (s-sum-p);
            \path[spath/save=merid-F] (n-F) to (s-F);
            \path[spath/save=merid-f] (n-f) to (s-f);

            \tikzset{
              spath/split at intersections with={s-cup}{e0-e1},
              spath/split at intersections with={s-cup}{e01-a01},
              spath/split at intersections with={s-horiz}{e01-a01},
              spath/split at intersections with={merid-yo}{n-cup},
              spath/split at intersections with={merid-sum-p}{n-cup},
              spath/split at intersections with={merid-sum-p}{n-horiz},
              spath/insert gaps after components={s-cup}{1.5pt},
              spath/insert gaps after components={s-horiz}{1.5pt},
              spath/insert gaps after components={merid-sum-p}{3pt},
              spath/insert gaps after components={merid-yo}{3pt}
            }

            \draw[
              spath/use=n-horiz,
              spath/use=s-horiz,
              spath/use=n-cup,
              spath/use=s-cup,
              spath/use=e0-e1,
              spath/use=e01-a01,
              spath/use=merid-f,
            ];
            \draw[
              very densely dotted,
              spath/use=merid-a0,
              spath/use=merid-yo,
              spath/use=merid-sum-p,
              spath/use=merid-F,
            ];

            \path
              node[anchor=south] at (n-f.north) {\scriptsize$f_0$}
              node[anchor=north] at (s-f.south) {\scriptsize$f_1$}
            ;

            \path
              node[anchor=north] at (a01.south) {\scriptsize$a_{01}$}
              node[anchor=north west] at (e1.south east) {\scriptsize$e_1$}
              node[anchor=south west] at (e0.north east) {\scriptsize$e_0$};
          \end{tikzpicture}
          \quad=\quad
          \begin{tikzpicture}[line width=.5pt, baseline=(current bounding box.center)]
            \begin{scope}[canvas is zx plane at y=2,yscale=-1,xscale=-1,transform shape]
              \CreateRect{3}{2.5}
              \path
                coordinate (n-a0) at (spath cs:north 0.25)
                coordinate (n-yo) at (spath cs:north 0.5)
                coordinate (n-sum-p) at (spath cs:north 0.75)
                coordinate (n-F) at (spath cs:south 0.5)
              ;
              \path[spath/save global=n-horiz] (n-yo) to (n-F);
              \path[spath/save global=n-cup] (n-sum-p) to[in=-90,out=-90,looseness=2] coordinate[dot] (e0) (n-a0);
              \coordinate[dot] (n-f) at (spath cs:n-horiz 0.8);
            \end{scope}

            \begin{scope}[canvas is zx plane at y=0, yscale=-1, xscale=-1,transform shape]
              \CreateRect{3}{2.5}
              \path
                coordinate (s-a0) at (spath cs:north 0.25)
                coordinate (s-yo) at (spath cs:north 0.5)
                coordinate (s-sum-p) at (spath cs:north 0.75)
                coordinate (s-F) at (spath cs:south 0.5)
              ;
              \path[spath/save global=s-horiz] (s-yo) to (s-F);
              \path[spath/save global=s-cup] (s-sum-p) to[in=-90,out=-90,looseness=2] coordinate[dot] (e1) (s-a0);
              \coordinate[dot] (a01) at (spath cs:s-cup 0.8);
              \coordinate[dot] (s-f) at (spath cs:s-horiz 0.8);
            \end{scope}

            \path[spath/save=e0-e1] (e0.center) to coordinate[dot,near start,label=right:$e_{01}$] (e01) (e1.center);
            \path (n-f.center) to coordinate[dot,near end,label=left:$f_{01}$] (f01) (s-f.center);
            \path[spath/save=e01-a01,bend right=10] (e01.center) to (a01.center);

            \path[spath/save=merid-a0] (n-a0) to (s-a0);
            \path[spath/save=merid-yo] (n-yo) to (s-yo);
            \path[spath/save=merid-sum-p] (n-sum-p) to (s-sum-p);
            \path[spath/save=merid-F] (n-F) to (s-F);
            \path[spath/save=merid-f] (n-f) to (s-f);

            \tikzset{
              spath/split at intersections with={s-cup}{e0-e1},
              spath/split at intersections with={s-cup}{e01-a01},
              spath/split at intersections with={s-horiz}{e01-a01},
              spath/split at intersections with={merid-yo}{n-cup},
              spath/split at intersections with={merid-sum-p}{n-cup},
              spath/split at intersections with={merid-sum-p}{n-horiz},
              spath/insert gaps after components={s-cup}{1.5pt},
              spath/insert gaps after components={s-horiz}{1.5pt},
              spath/insert gaps after components={merid-sum-p}{3pt},
              spath/insert gaps after components={merid-yo}{3pt}
            }

            \draw[
              spath/use=n-horiz,
              spath/use=s-horiz,
              spath/use=n-cup,
              spath/use=s-cup,
              spath/use=e0-e1,
              spath/use=e01-a01,
              spath/use=merid-f,
            ];
            \draw[
              very densely dotted,
              spath/use=merid-a0,
              spath/use=merid-yo,
              spath/use=merid-sum-p,
              spath/use=merid-F,
            ];

            \path
              node[anchor=south] at (n-f.north) {\scriptsize$f_0$}
              node[anchor=north] at (s-f.south) {\scriptsize$f_1$}
            ;

            \path
              node[anchor=north] at (a01.south) {\scriptsize$a_{01}$}
              node[anchor=north west] at (e1.south east) {\scriptsize$e_1$}
              node[anchor=south west] at (e0.north east) {\scriptsize$e_0$};
          \end{tikzpicture}
        \]
      \end{con}

      We have now constructed each of the local hom functors $\prn{\OplDiag{p}}_{E,F}\colon \FIB{B}\prn{E,F}\to \FIB{A}\prn{\OplDiag{p}\prn{E},\OplDiag{p}\prn{F}}$.
    \end{xsect}

    \begin{xsect}{Respect of local hom functors for composition and identity}
      In order to conclude that the actions of oplax base change on 0-, 1-, and 2-cells that we have defined above assemble into a 2-functor, it remains to check that the assignment of local hom functors respects horizontal composition and identity data.
      In what follows, we shall write $\Gamma_{\mathfrak{C}}$ and $\mathrm{J}_{\mathfrak{C}}$ for the respective horizontal composition and identity data of a 2-category $\mathfrak{C}$.

      \begin{lem}\label{lem:local-hom-functors-preserve-composition}
        For any three fibred categories $E,F,G\in\FIB{B}$ over $B$, the square below commutes strictly.
        \[
          \DiagramSquare{
            nw = \FIB{B}\prn{E,F}\times\FIB{B}\prn{F,G},
            ne = \FIB{B}\prn{E,G},
            sw = {
             \begin{array}{l}
              \FIB{A}\prn{\OplDiag{p}\prn{E},\OplDiag{p}\prn{F}}\\
              {}\times
              \FIB{A}\prn{\OplDiag{p}\prn{F},\OplDiag{p}\prn{G}}
             \end{array}
            },
            se = \FIB{A}\prn{\OplDiag{p}\prn{E},\OplDiag{p}\prn{G}},
            north = \Gamma_{\FIB{B}}^{E,F,G},
            south = \Gamma_{\FIB{A}}^{\OplDiag{p}\prn{E},\OplDiag{p}\prn{F},\OplDiag{p}\prn{G}},
            west = \prn{\OplDiag{p}}_{E,F}\times\prn{\OplDiag{p}}_{F,G},
            east = \prn{\OplDiag{p}}_{E,G},
            width = 9cm,
          }
        \]
      \end{lem}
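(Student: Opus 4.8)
The plan is to verify strict commutativity of the square on objects and on morphisms of the hom-categories separately, in each case reducing to the associativity and interchange laws already available in $\Gray$ via the surface calculus of \autoref{con:oplax-base-change:local-hom-functor}.

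First, on $0$-cells: an object of $\FIB{B}\prn{E,F}\times\FIB{B}\prn{F,G}$ is a pair of fibred functors $\prn{f\colon E\to F,\; g\colon F\to G}$ over $B$. Around the top edge, $\Gamma_{\FIB{B}}^{E,F,G}$ produces $g\circ f$, and $\prn{\OplDiag{p}}_{E,G}\prn{g\circ f}$ sends a displayed object $e\colon \OplSum{p}\prn{\Yo{A}\prn{a}}\to E$ to the composite fibred functor $\prn{g\circ f}\circ e$. Around the bottom edge, we first obtain the pair $\prn{\prn{\OplDiag{p}}_{E,F}\prn{f},\, \prn{\OplDiag{p}}_{F,G}\prn{g}}$, whose composite in $\FIB{A}$ sends $e$ to $g\circ\prn{f\circ e}$. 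These agree on the nose by associativity of horizontal composition of $1$-cells in $\FIB{B}$, and the same observation one dimension up shows that the two displayed functors produced by the two routes agree on displayed arrows, since each route whiskers the $3$-cell $e_{01}$ successively with $f$ and with $g$ — which is the whiskering of $e_{01}$ with $g\circ f$, by the associativity law for $\Gray$. One checks similarly that both routes preserve the split cleaving of \autoref{lem:opl-diag-splitting} on the nose, so the two fibred functors literally coincide.

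Second, on $1$-cells of the hom-categories: a morphism of $\FIB{B}\prn{E,F}\times\FIB{B}\prn{F,G}$ is a pair of vertical natural transformations $\prn{f_{01}\colon f_0\to f_1,\; g_{01}\colon g_0\to g_1}$. Both routes yield a $2$-cell of $\FIB{A}$ whose component at $e\in\OplDiag{p}\prn{E}$ over $a$ is a pasting of $3$-cells in $\Gray$: along the top it is the horizontal composite $g_{01}\ast f_{01}$ whiskered with $e$, while along the bottom it is the $\FIB{A}$-horizontal composite of $\prn{\OplDiag{p}}_{E,F}\prn{f_{01}}$ and $\prn{\OplDiag{p}}_{F,G}\prn{g_{01}}$, whose $e$-component unfolds to $g_{01}$ and $f_{01}$ each whiskered with $e$ and then pasted. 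Equality is then an instance of the interchange law, read off the surface diagrams exactly as the naturality square was in \autoref{con:oplax-base-change:local-hom-functor}.

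I do not expect a genuine obstacle: the statement is essentially bookkeeping, and the surface calculus turns each equality into a manipulation of spatial diagrams that holds by the coherence of $\Gray$. The one point requiring care is unwinding $\Gamma_{\FIB{A}}$ on the displayed functors $\prn{\OplDiag{p}}_{E,F}\prn{f}$ and $\prn{\OplDiag{p}}_{F,G}\prn{g}$ — that is, confirming that their horizontal composite, and the horizontal composite of the $2$-cells between them, is computed precisely by the fibrewise postcomposition and whiskering we expect — after which the identities follow mechanically. An entirely analogous and easier argument, which I would record as a companion lemma phrased with the identity data $\mathrm{J}_{\FIB{B}}$ and $\mathrm{J}_{\FIB{A}}$, shows that $\prn{\OplDiag{p}}_{E,E}$ carries $\mathrm{id}_E$ to the identity displayed functor on $\OplDiag{p}\prn{E}$, since postcomposition with $\mathrm{id}_E$ is the identity operation.
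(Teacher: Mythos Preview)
Your approach is correct. The paper states this lemma without proof, leaving it as a routine verification; your proposal supplies exactly the natural argument one would give to fill in the details, namely checking strict commutativity on 0-cells via associativity of composition in $\FIB{B}$ and on 1-cells via the interchange law for whiskering, both read off the surface calculus of \autoref{con:oplax-base-change:local-hom-functor}. Your companion remark about identities is likewise on target: the paper records that as the next lemma (\autoref{lem:local-hom-functors-preserve-identities}), again without proof.
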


      \begin{lem}\label{lem:local-hom-functors-preserve-identities}
        Let $p\colon A\rightarrowtriangle B$ be a fibration. For any fibred category $E\in \FIB{B}$, the following triangle commutes strictly:
        \begin{equation*}
          \begin{tikzpicture}[diagram]
            \node (nw) {$\mathbf{1}_{\CAT}$};
            \node[below right = 4cm of nw] (s) {$\FIB{A}\prn{\OplDiag{p}\prn{E},\OplDiag{p}\prn{E}}$};
            \node[above right = 4cm of s] (ne) {$\FIB{B}\prn{E,E}$};
            \draw[->] (nw) to node[above] {$\mathrm{J}_{\FIB{B}}^{E}$} (ne);
            \draw[->] (nw) to node[below left] {$\mathrm{J}_{\FIB{A}}^{\OplDiag{p}\prn{E}}$} (s);
            \draw[->] (ne) to node[below right] {$\prn{\OplDiag{p}}_{E,E}$} (s);
          \end{tikzpicture}
        \end{equation*}
      \end{lem}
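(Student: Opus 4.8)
The plan is to collapse the triangle to a single strict equality of displayed functors. Since $\mathbf{1}_{\CAT}$ is the terminal category, any functor out of it is completely determined by the image of its unique object (it agrees with every other such functor on the unique identity morphism automatically), so the triangle commutes strictly if and only if
\[
  \prn{\OplDiag{p}}_{E,E}\prn{1_E} = 1_{\OplDiag{p}\prn{E}}
\]
as objects of $\FIB{A}\prn{\OplDiag{p}\prn{E},\OplDiag{p}\prn{E}}$, \ie as displayed functors $\OplDiag{p}\prn{E}\to\OplDiag{p}\prn{E}$ over $A$. (Functoriality of $\prn{\OplDiag{p}}_{E,E}$ then takes care of the identity $2$-cell $1_{1_E}$ for free.)

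To verify this equality it suffices to compare the two displayed functors on displayed objects and displayed arrows, since a displayed functor over $A$ is determined by that data. By \autoref{con:oplax-base-change:local-hom-functor}, $\prn{\OplDiag{p}}_{E,E}\prn{1_E}$ sends a displayed object $e\colon\OplSum{p}\prn{\Yo{A}\prn{a}}\to E$ over $a$ to the composite fibred functor $1_E\circ e$, and it sends a displayed arrow $e_{01}\colon e_0\to e_1$ over $a_{01}$ to the whiskering of $e_{01}$ with $1_E$. Because $\FIB{B}$ is a strict $2$-category — horizontal composition of fibred functors is ordinary functor composition, and whiskering a fibred natural transformation by an identity fibred functor is the identity operation — we have $1_E\circ e = e$ and the whiskering of $e_{01}$ with $1_E$ equal to $e_{01}$, both on the nose. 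Hence $\prn{\OplDiag{p}}_{E,E}\prn{1_E}$ acts as the identity on all displayed data and therefore coincides with $1_{\OplDiag{p}\prn{E}}$.

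The only point that needs any care is that we are asserting a genuine \emph{equality} rather than a mere equivalence, and this rests entirely on the strict left and right unit laws for horizontal composition in $\FIB{B}$. In the surface-diagram calculus used in \autoref{con:oplax-base-change:local-hom-functor} the identity fibred functor $1_E$ corresponds to a trivial (invisible) wire, so whiskering against it leaves each surface diagram literally unchanged, and there is nothing further to compute. This makes the present lemma considerably more immediate than its companion \autoref{lem:local-hom-functors-preserve-composition}, where one must genuinely inspect the pasting of two surface diagrams.
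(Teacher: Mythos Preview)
Your argument is correct and is precisely the routine verification the paper has in mind; in fact the paper states this lemma without proof, leaving it as an evident consequence of the strict unit laws in $\FIB{B}$, which is exactly what you invoke.
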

    \end{xsect}

    \begin{cor}
      Given any fibration $p\colon A\rightarrowtriangle B$, Constructions~\ref{con:oplax-base-change} and \ref{con:oplax-base-change:local-hom-functor} assemble into a 2-functor $\OplDiag{p}\colon \FIB{B}\to\FIB{A}$.
    \end{cor}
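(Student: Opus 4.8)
The plan is simply to recognise that the statement asks us to package, as a strict 2-functor $\FIB{B}\to\FIB{A}$, the object assignment of \autoref{con:oplax-base-change}, the local hom functors of \autoref{con:oplax-base-change:local-hom-functor}, and the compatibility data already proved. Since both $\FIB{B}$ and $\FIB{A}$ are strict 2-categories, the only data required of a 2-functor $\OplDiag{p}$ are: (i) a 0-cell $\OplDiag{p}\prn{E}\in\FIB{A}$ for each $E\in\FIB{B}$; (ii) a functor $\prn{\OplDiag{p}}_{E,F}\colon\FIB{B}\prn{E,F}\to\FIB{A}\prn{\OplDiag{p}\prn{E},\OplDiag{p}\prn{F}}$ for each pair $E,F$; and (iii) strict compatibility of these local hom functors with horizontal composition and with identity 1-cells. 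There are no coherence 2-cells to supply or check.

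For (i), I would invoke \autoref{lem:opl-diag-splitting}: the displayed category $\OplDiag{p}\prn{E}$ constructed over $A$ carries a split cleaving, hence is genuinely a fibred category, so the assignment lands in $\FIB{A}$ rather than merely $\CAT/A$. For (ii), three things must be noted. First, the action of $\prn{\OplDiag{p}}_{E,F}$ on a fibred functor produces a \emph{split} fibred functor, by the observation following \autoref{con:oplax-base-change:local-hom-functor}, so it is a 1-cell of $\FIB{A}$. Second, its action on a vertical natural transformation $f_{01}$ produces a vertical natural transformation whose naturality square is exactly the 3-cell interchange displayed in \autoref{con:oplax-base-change:local-hom-functor}. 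Third, $\prn{\OplDiag{p}}_{E,F}$ is functorial — it preserves vertical composites and identities of fibred natural transformations — because its action on 2-cells is given by whiskering in $\Gray$, which is strictly functorial; this is the one point left implicit by the constructions, but it is immediate.

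Finally, for (iii), strict compatibility with horizontal composition is \autoref{lem:local-hom-functors-preserve-composition} and strict compatibility with identity 1-cells is \autoref{lem:local-hom-functors-preserve-identities}. Assembling (i)--(iii) yields the required 2-functor. There is no substantive obstacle here, since every nontrivial verification has been discharged by the preceding lemmas; the only point needing care is the type-checking — ensuring at each stage that a construction performed a priori in $\CAT/A$ genuinely restricts to $\FIB{A}$ — which is precisely what the split-cleaving lemma and the split-fibredness observation guarantee.
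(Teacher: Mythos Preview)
Your proposal is correct and takes essentially the same approach as the paper, which states the result as an immediate corollary of the preceding constructions and Lemmas~\ref{lem:local-hom-functors-preserve-composition} and~\ref{lem:local-hom-functors-preserve-identities} without further proof. You have simply made explicit the type-checking and functoriality verifications that the paper leaves implicit.
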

  \end{xsect}

  \begin{xsect}{Oplax base change as a right pseudo-adjoint}

    Let $p\colon A\rightarrowtriangle B$ be a fibration. We plan to exhibit the oplax base change 2-functor $\OplDiag{p}\colon\FIB{B}\to\FIB{A}$ as the right pseudo-adjoint to the oplax sum $\OplSum{p}\colon\FIB{A}\to\FIB{B}$. There are multiple ways to present pseudo-adjunctions, but we will prefer the one based on a pseudo-natural equivalence of hom-categories. In other words:

    \begin{itemize}[label=$\triangleright$]
      \item We must assign to fibred categories $E\in\FIB{A}$ and $F\in \FIB{B}$ equivalences \[{\sharp_{E,F}}\colon \FIB{A}\prn{E,\OplDiag{p}\prn{F}}\simeq \FIB{B}\prn{\OplSum{p}\prn{E},F}\text{.}\]
      \item To fibred functors $E_{10}\colon E_1\to E_0$ and $F_{01}\colon F_0\to F_1$ we must assign invertible pseudo-naturality cells as follows:
        \[
          \begin{tikzpicture}[line width=.5pt,baseline=(current bounding box.center)]
            \begin{scope}[canvas is zx plane at y=3,yscale=-1,xscale=-1,transform shape]
              \CreateRect{4}{4}
              \path
                coordinate (n-E-DpF) at (spath cs:north 0.5)
                coordinate (n-SpE-F) at (spath cs:south 0.5)
              ;
              \path[spath/save global=n-horiz] (n-E-DpF) to (n-SpE-F);
              \path
                coordinate[dot] (n-tr) at (spath cs:n-horiz 0.4)
                coordinate[dot] (n-sum) at (spath cs:n-horiz 0.85);
            \end{scope}

            \begin{scope}[canvas is zx plane at y=0, yscale=-1, xscale=-1,transform shape]
              \CreateRect{4}{4}
              \path
                coordinate (s-E-DpF) at (spath cs:north 0.5)
                coordinate (s-SpE-F) at (spath cs:south 0.5)
              ;
              \path[spath/save global=s-horiz] (s-E-DpF) to (s-SpE-F);
              \path
                coordinate[dot] (s-tr) at (spath cs:s-horiz 0.6)
                coordinate[dot] (s-diag) at (spath cs:s-horiz 0.15);
            \end{scope}

            \draw
              node[anchor=south] at (n-tr.north) {\scriptsize$\strut\sharp_{E_0,F_0}$}
              node[anchor=south] at (n-sum.north) {\scriptsize$\strut\OplSum{p}\prn{E_{10}}^*$}
              node[anchor=north] at (s-tr.south) {\scriptsize$\strut\sharp_{E_1,F_1}$}
              node[anchor=north] at (s-diag.south) {\scriptsize$\strut\OplDiag{p}\prn{F_{01}}_!$}
              ;

            \draw[spath/save=ns-tr] (n-tr) to (s-tr);
            \draw[spath/save=ns-aux] (n-sum) to[in=90,out=-90] (s-diag);

            \path[name intersections={of=ns-tr and ns-aux}]
              coordinate[dot,label={340:$\sharp_{E_{10},F_{01}}$}] (tr-nat) at (intersection-1)
            ;

            \draw[
              spath/use=n-horiz,
              spath/use=s-horiz,
            ];

            \draw[densely dotted] (n-E-DpF) to node[anchor=east] {\scriptsize$\FIB{A}\prn{E_0,\OplDiag{p}\prn{F_0}}$} (s-E-DpF);
            \draw[densely dotted] (n-SpE-F) to node[anchor=west] {\scriptsize$\FIB{A}\prn{\OplSum{p}\prn{E_1},F_1}$} (s-SpE-F);
          \end{tikzpicture}
        \]
      \item The pseudo-naturality datum $\sharp_{1_E,1_F}$ needs to be equal to the identity 2-cell on \[ \sharp_{E,F}\colon \FIB{A}\prn{E,\OplDiag{p}\prn{F}}\to \FIB{B}\prn{\OplSum{p}\prn{E},F}\text{.}\]
      \item For $E_{21}\colon E_2\to E_1$, $E_{10}\colon E_1\to E_0$, $F_{01}\colon E_0\to E_1$ and $F_{12}\colon E_1\to E_2$, the pseudonaturality datum $\sharp_{E_{10}\circ E_{21},F_{12}\circ F_{01}}$ must be equal to the following:
        \[
          \begin{tikzpicture}[line width=.5pt,baseline=(current bounding box.center)]
            \begin{scope}[canvas is zx plane at y=4,yscale=-1,xscale=-1,transform shape]
              \CreateRect{4}{6}
              \path
                coordinate (n-E-DpF) at (spath cs:north 0.5)
                coordinate (n-SpE-F) at (spath cs:south 0.5)
              ;
              \path[spath/save global=n-horiz] (n-E-DpF) to (n-SpE-F);
              \path
                coordinate[dot] (n-tr) at (spath cs:n-horiz 0.3)
                coordinate[dot] (n-sum-10) at (spath cs:n-horiz 0.6)
                coordinate[dot] (n-sum-21) at (spath cs:n-horiz 0.9);
                ;
            \end{scope}

            \begin{scope}[canvas is zx plane at y=0, yscale=-1, xscale=-1,transform shape]
              \CreateRect{4}{6}
              \path
                coordinate (s-E-DpF) at (spath cs:north 0.5)
                coordinate (s-SpE-F) at (spath cs:south 0.5)
              ;
              \path[spath/save global=s-horiz] (s-E-DpF) to (s-SpE-F);
              \path
                coordinate[dot] (s-tr) at (spath cs:s-horiz 0.7)
                coordinate[dot] (s-diag-01) at (spath cs:s-horiz 0.1)
                coordinate[dot] (s-diag-12) at (spath cs:s-horiz 0.4)
                ;
            \end{scope}

            \draw
              node[anchor=south] at (n-tr.north) {\scriptsize$\strut\sharp_{E_0,F_0}$}
              node[anchor=south] at (n-sum-10.north) {\scriptsize$\strut\OplSum{p}\prn{E_{10}}^*$}
              node[anchor=south] at (n-sum-21.north) {\scriptsize$\strut\OplSum{p}\prn{E_{21}}^*$}
              node[anchor=north] at (s-tr.south) {\scriptsize$\strut\sharp_{E_1,F_1}$}
              node[anchor=north] at (s-diag-01.south) {\scriptsize$\strut\OplDiag{p}\prn{F_{01}}_!$}
              node[anchor=north] at (s-diag-12.south) {\scriptsize$\strut\OplDiag{p}\prn{F_{12}}_!$}
              ;

            \draw[spath/save=ns-tr] (n-tr) to (s-tr);
            \draw[spath/save=ns-aux-01] (n-sum-10) to[in=90,out=-90] (s-diag-01);
            \draw[spath/save=ns-aux-12] (n-sum-21) to[in=90,out=-90] (s-diag-12);

            \path[name intersections={of=ns-tr and ns-aux-01}]
              coordinate[dot,label={170:$\sharp_{E_{10},F_{01}}$}] (tr-nat) at (intersection-1)
            ;
            \path[name intersections={of=ns-tr and ns-aux-12}]
              coordinate[dot,label={[label distance=.2cm]0:$\sharp_{E_{21},F_{12}}$}] (tr-nat) at (intersection-1)
            ;
            \draw[
              spath/use=n-horiz,
              spath/use=s-horiz,
            ];

            \draw[densely dotted] (n-E-DpF) to node[anchor=east] {\scriptsize$\FIB{A}\prn{E_0,\OplDiag{p}\prn{F_0}}$} (s-E-DpF);
            \draw[densely dotted] (n-SpE-F) to node[anchor=west] {\scriptsize$\FIB{A}\prn{\OplSum{p}\prn{E_2},F_2}$} (s-SpE-F);
          \end{tikzpicture}
        \]
    \end{itemize}

    \begin{xsect}{Transpose functors}
      Let $E\in\FIB{A}$ and $F\in\FIB{B}$ be fibred categories.
      In this section, we will define the transposition functor ${\sharp_{E,F}}\colon \FIB{A}\prn{E,\OplDiag{p}\prn{F}} \to \FIB{B}\prn{\OplSum{p}\prn{E},F}$ and show that it is an equivalence.

      \begin{con}[Action of transposition on objects]
        Let $\phi\colon E \to \OplDiag{p}\prn{F}$ be a fibred functor over $A$. We first define a displayed functor $\phi^\sharp\colon \OplSum{p}\prn{E}\to F$ over $B$ acting as follows:
        \begin{itemize}[label=$\triangleright$]
          \item \emph{On objects.} Fix a displayed object $\gl{a,e}\in\OplSum{p}\prn{E}$ over $b\in B$, so that $e$ lies over $a$ and $p\prn{a}=b$. We define $\phi^\sharp_b\gl{a,e}$ to be the component $\phi_a\prn{e}\gl{a,1_a}$.
          \item \emph{On arrows.} Fix a displayed arrow $\gl{a_{01},e_{01}}$ in $\OplSum{p}\prn{E}$ over $b_{01}\colon b_0\to b_1$ so that $p\prn{a_{01}}=b_{01}$ and $e_{01}$ lies over $a_{01}$. We define \[ \phi^\sharp_{b_{01}}\gl{a_{01},e_{01}}\colon \phi_{a_0}\prn{e_0}\gl{a_0,1_{a_0}}\to \phi_{a_1}\prn{e_1}\gl{a_1,1_{a_1}}\] as follows:
          \begin{itemize}
            \item Using the displayed functorial action of $\phi$, we have a displayed arrow $\phi_{a_{01}}\prn{e_{01}}\colon \phi_{a_0}\prn{e_0}\to \phi_{a_1}\prn{e_1}$ in $\OplDiag{p}\prn{F}$ over $a_{01}$.
            \item The underlying 2-cell of the above is thus an arrow \[\phi_{a_{01}}\prn{e_{01}}\colon \phi_{a_0}\prn{e_0}\to \phi_{a_1}\prn{e_1} \circ \OplSum{p}\prn{\Yo{A}\prn{a_{01}}}\] in $\FIB{B}\prn{\OplSum{p}\prn{\Yo{A}\prn{a_0}},F}$. Instantiating at the displayed object $\gl{a_0,1_{a_0}}$ in $\OplSum{p}\prn{\Yo{A}\prn{a_0}}$ over $b_0$, we obtain
            \[\phi_{a_{01}}\prn{e_{01}}\gl{a_0,1_{a_0}}\colon \phi_{a_0}\prn{e_0}\gl{a_0,1_{a_0}}\to \phi_{a_1}\prn{e_1}\gl{a_0,a_{01}}\text{.}\]
            \item Finally, we compose with the functorial action of $\phi_{a_1}\prn{e_1}$ on the terminal map \[!_{\gl{a_0,a_{01}}}\colon \gl{a_0,a_{01}}\to \gl{a_1,1_{a_1}}\] in $\OplSum{p}\prn{\Yo{A}\prn{a_1}}$ over $b_{01}\colon b_0\to b_1$.
          \end{itemize}
          \noindent
          All in all, we define $\phi^\sharp_{b_{01}}\gl{a_{01},e_{01}} = \phi_{a_1}\prn{e_1}\prn{!_{\gl{a_0,a_{01}}}}\circ \phi_{a_{01}}\prn{e_{01}}\gl{a_0,1_{a_0}}$.
        \end{itemize}
      \end{con}

      \begin{lem}
        The displayed functor $\phi^\sharp\colon \OplSum{p}\prn{E}\to F$ is fibred, \ie sends cartesian arrows to cartesian arrows.
      \end{lem}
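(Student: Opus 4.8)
The plan is to reduce the claim to the known behaviour of cartesian arrows in $\OplSum{p}\prn{E}$ and in $\OplDiag{p}\prn{F}$. For the former, recall from \S~\ref{sec:intro:oplax-sum-and-base-change} that, following Streicher, the displayed projection $\OplSum{p}\prn{E}\to A$ over $B$ preserves cartesian arrows; since the fibration $\OplSum{p}\prn{E}\rightarrowtriangle B$ is by construction the composite of the fibration of $E$ over $A$ with $p\colon A\rightarrowtriangle B$, and since an arrow $f$ is cartesian for a composite $p\circ q$ of fibrations exactly when $f$ is $q$-cartesian and $q\prn{f}$ is $p$-cartesian, a displayed arrow $\gl{a_{01},e_{01}}$ of $\OplSum{p}\prn{E}$ over $b_{01}=p\prn{a_{01}}$ is cartesian if and only if $a_{01}$ is $p$-cartesian and $e_{01}$ is cartesian in $E$ over $a_{01}$. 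For the latter, the corollary to \autoref{lem:opl-diag-splitting} tells us that a displayed arrow of $\OplDiag{p}\prn{F}$ over $a_{01}$ is cartesian precisely when its corresponding 2-cell in $\FIB{B}\prn{\OplSum{p}\prn{\Yo{A}\prn{a_0}},F}$ is invertible.

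Given these, I would fix a cartesian arrow $\gl{a_{01},e_{01}}\colon\gl{a_0,e_0}\to\gl{a_1,e_1}$ of $\OplSum{p}\prn{E}$ over $b_{01}$ --- so that $a_{01}$ is $p$-cartesian and $e_{01}$ is cartesian in $E$ over $a_{01}$ --- and appeal to the defining formula
\[
  \phi^\sharp_{b_{01}}\gl{a_{01},e_{01}} = \phi_{a_1}\prn{e_1}\prn{!_{\gl{a_0,a_{01}}}}\circ\phi_{a_{01}}\prn{e_{01}}\gl{a_0,1_{a_0}}
\]
to reduce the problem to showing that the right-hand factor is a vertical isomorphism of $F$ over $1_{b_0}$ and the left-hand factor is cartesian in $F$ over $b_{01}$: the composite is then cartesian over $b_{01}\circ 1_{b_0}=b_{01}$, since vertical isomorphisms are cartesian and cartesian arrows compose.

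For the right-hand factor: because $\phi$ is a fibred functor over $A$ and $e_{01}$ is cartesian in $E$, the displayed arrow $\phi_{a_{01}}\prn{e_{01}}$ is cartesian in $\OplDiag{p}\prn{F}$ over $a_{01}$, so by the characterisation above its underlying 2-cell $\phi_{a_0}\prn{e_0}\to\phi_{a_1}\prn{e_1}\circ\OplSum{p}\prn{\Yo{A}\prn{a_{01}}}$ is invertible; being the component of this vertical natural transformation at $\gl{a_0,1_{a_0}}$ (an object over $b_0$), $\phi_{a_{01}}\prn{e_{01}}\gl{a_0,1_{a_0}}$ is a vertical isomorphism of $F$. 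For the left-hand factor: the terminal map $!_{\gl{a_0,a_{01}}}$ is the arrow of $\OplSum{p}\prn{\Yo{A}\prn{a_1}}$ whose image in $A$ is $a_{01}$, and since $\Yo{A}\prn{a_1}$ is the discrete domain fibration $A/a_1\to A$, every arrow of $A/a_1$ is cartesian over $A$; as $a_{01}$ is $p$-cartesian by hypothesis, the composite-of-fibrations fact again shows that $!_{\gl{a_0,a_{01}}}$ is cartesian in $\OplSum{p}\prn{\Yo{A}\prn{a_1}}$ over $b_{01}$. Finally, $\phi_{a_1}\prn{e_1}\colon\OplSum{p}\prn{\Yo{A}\prn{a_1}}\to F$ is a fibred functor --- this is exactly what it means to be a displayed object of $\OplDiag{p}\prn{F}$ --- so it carries $!_{\gl{a_0,a_{01}}}$ to a cartesian arrow of $F$ over $b_{01}$, as needed.

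I expect the principal difficulty to be bookkeeping rather than substance: keeping track of which base arrow of $B$ each of the two factors lies over (so that their composite really does land over $b_{01}$), and noticing that the two inputs of cartesianness arrive by genuinely different routes --- one via $\phi$ being fibred together with $e_{01}$ being cartesian in $E$, the other via $a_{01}$ being $p$-cartesian together with $\Yo{A}\prn{a_1}$ being a discrete fibration. The rest is the standard calculus of cartesian morphisms.
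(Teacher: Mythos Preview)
Your proposal is correct and follows essentially the same approach as the paper's proof: both decompose $\phi^\sharp_{b_{01}}\gl{a_{01},e_{01}}$ into the two factors, argue that $\phi_{a_1}\prn{e_1}\prn{!_{\gl{a_0,a_{01}}}}$ is cartesian because $a_{01}$ is $p$-cartesian and $\phi_{a_1}\prn{e_1}$ is fibred, and that $\phi_{a_{01}}\prn{e_{01}}\gl{a_0,1_{a_0}}$ is a vertical isomorphism because $\phi$ is fibred and cartesian arrows in $\OplDiag{p}\prn{F}$ correspond to invertible 2-cells. The paper spells out the vertical-isomorphism step slightly more concretely via the explicit split cleaving of \autoref{lem:opl-diag-splitting}, but the substance is the same.
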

      \begin{proof}
        We first fix the data of a cartesian arrow in $\OplSum{p}\prn{E}$.
        \[
          \begin{tikzpicture}[diagram]
            \SpliceDiagramSquare<top/>{
              height = 1.5cm,
              west/style = {|->},
              east/style = {|->},
              nw/style = pullback,
              sw/style = pullback,
              nw = e_0,
              ne = e_1,
              north = e_{01},
              sw = a_0,
              se = a_1,
              south = a_{01},
              south/node/style = upright desc,
            }
            \node[below = 1.5cm of top/sw] (bot/sw) {$b_0$};
            \node[below = 1.5cm of top/se] (bot/se) {$b_1$};
            \draw[->] (bot/sw) to node[below] {$b_{01}$} (bot/se);
            \draw[{|->}] (top/sw) to (bot/sw);
            \draw[{|->}] (top/se) to (bot/se);
          \end{tikzpicture}
        \]

        We wish to check that the composite $\phi^\sharp_{b_{01}}\gl{a_{01},e_{01}} = \phi_{a_1}\prn{e_1}\prn{!_{\gl{a_0,a_{01}}}}\circ \phi_{a_{01}}\prn{e_{01}}\gl{a_0,1_{a_0}}$ is cartesian over $b_{01}$.
        \[
          \begin{tikzpicture}[diagram]
            \SpliceDiagramSquare<sq/>{
              width = 7cm,
              nw = \phi_{a_1}\prn{e_1}\gl{a_{0},a_{01}},
              ne = \phi_{a_1}\prn{e_1}\gl{a_1,1_{a_1}},
              north = \phi_{a_1}\prn{e_1}\prn{!_{\gl{a_0,a_{01}}}},
              north/node/style = upright desc,
              sw = b_0,
              se = b_1,
              south = b_{01},
              west/style = {|->},
              east/style = {|->},
              height = 1.5cm,
            }
            \node[above = of sq/nw] (nw) {$\phi_{a_0}\prn{e_0}\gl{a_0,1_{a_0}}$};
            \draw[->] (nw) to node[left] {$\phi_{a_{01}}\prn{e_{01}}\gl{a_0,1_{a_0}}$} (sq/nw);
            \draw[->] (nw) to node[sloped,above] {$\phi^\sharp_{b_{01}}\gl{a_{01},e_{01}}$} (sq/ne);
          \end{tikzpicture}
        \]

        We first observe that the universal map $!_{\gl{a_0,a_{01}}}\colon \gl{a_0,a_{01}}\to \gl{a_1,1_{a_1}}$ is cartesian in $\OplSum{p}\prn{\Yo{A}\prn{a_1}}$ over $b_{01}\colon b_0\to b_1$ because $a_{01}$ is cartesian over $b_{01}$:
        \[
          \DiagramSquare{
            nw = \gl{a_0,a_{01}},
            ne = \gl{a_1,1_{a_1}},
            sw = b_0,
            se = b_1,
            south = b_{01},
            north = !_{\gl{a_0,a_{01}}},
            nw/style = pullback,
            west/style = {|->},
            east/style = {|->},
            height = 1.5cm,
            width = 3cm,
          }
        \]

        Because $\phi_{a_1}\prn{e_1}\colon \OplSum{p}\prn{\Yo{A}\prn{a_1}}\to F$ is a fibred functor, the induced functorial arrow below is likewise cartesian in $F$ over $b_{01}\colon b_0\to b_1$:
        \[
          \DiagramSquare{
            width = 6cm,
            nw = \phi_{a_1}\prn{e_1}\gl{a_{0},a_{01}},
            ne = \phi_{a_1}\prn{e_1}\gl{a_1,1_{a_1}},
            north = \phi_{a_1}\prn{e_1}\prn{!_{\gl{a_0,a_{01}}}},
            sw = b_0,
            se = b_1,
            south = b_{01},
            west/style = {|->},
            east/style = {|->},
            height = 1.5cm,
            nw/style = pullback,
          }
        \]

        Because $\phi\colon E\to \OplDiag{p}\prn{F}$ is itself a fibred functor over $A$, the following must be cartesian in $\OplDiag{p}\prn{F}$ over $a_{01}$:
        \[
          \DiagramSquare{
            west/style = {|->},
            east/style = {|->},
            height = 1.5cm,
            width = 4cm,
            nw/style = pullback,
            nw = \phi_{a_0}\prn{e_0},
            ne = \phi_{a_1}\prn{e_1},
            north = \phi_{a_{01}}\prn{e_{01}},
            sw = a_0,
            se = a_1,
            south = a_{01},
          }
        \]

        Recall that we have split $\OplDiag{p}\prn{F}$ explicitly in \autoref{lem:opl-diag-splitting} by setting $\Lift{\OplDiag{p}\prn{F}}{a_{01}}{\phi_{a_1}\prn{e_1}} = \phi_{a_1}\prn{e_1}\circ\OplSum{p}\prn{\Yo{A}\prn{a_{01}}}$. Therefore, we have the following vertical cartesian isomorphism in $\OplDiag{p}\prn{F}$ where the composite cartesian arrow is, at the level of fibred natural transformations, the identity cell:
        \[
          \begin{tikzpicture}[diagram]
            \SpliceDiagramSquare<sq/>{
              nw = \phi_{a_0}\prn{e_0},
              ne = \phi_{a_1}\prn{e_1},
              north = \phi_{a_{01}}\prn{e_{01}},
              nw/style = pullback,
              sw = a_0,
              se = a_1,
              south = a_{01},
              north/node/style = upright desc,
              width = 5cm,
              height = 1.5cm,
              west/style = {|->},
              east/style = {|->},
            }
            \node[above = of sq/nw] (nw) {$\phi_{a_1}\prn{e_1}\circ\OplSum{p}\prn{\Yo{A}\prn{a_{01}}}$};
            \draw[->,exists] (nw) to node[left] {$\cong$} (sq/nw);
            \draw[->] (nw) to node[sloped,above] {$\LiftArr{\OplDiag{p}\prn{F}}{a_{01}}{\phi_{a_1}\prn{e_1}}$} (sq/ne);
          \end{tikzpicture}
        \]

        Instantiating the triangle above at $\gl{a_0,1_{a_0}}$ we therefore have the following in $F$:
        \[
          \begin{tikzpicture}[diagram]
            \node (nw) {$\phi_{a_1}\prn{e_1}\gl{a_0,a_{01}}$};
            \node[below = of nw] (sw) {$\phi_{a_0,a_{1_{a_0}}}$};
            \node[right = 5cm of sw] (se) {$\phi_{a_1}\prn{e_1}\gl{a_0,a_{01}}$};
            \draw[->] (nw) to node[left] {$\cong$} (sw);
            \draw[double] (nw) to (se);
            \draw[->] (sw) to node[below] {$\phi_{a_{01}}\prn{e_{01}}\gl{a_0,1_{a_0}}$} (se);
          \end{tikzpicture}
        \]

        To see that $\phi^\sharp_{b_{01}}\gl{a_{01},e_{01}}$ is cartesian, it is enough to see that its precomposition by the vertical cartesian isomorphism $\phi_{a_1}\prn{e_1}\gl{a_0,a_{01}}\cong \phi_{a_0}\prn{e_0}\gl{a_0,1_{a_0}}$ is cartesian. But this composite is equal to the cartesian arrow $\phi_{a_1}\prn{e_1}\prn{!_{\gl{a_0,a_{01}}}}$ so we are done.
      \end{proof}

      \begin{con}[Action of transposition on arrows]
        Let $\phi_{01}\colon \phi_0\to\phi_1$ be a natural transformation of fibred functors $\phi_0,\phi_1\colon E\to \OplDiag{p}\prn{F}$ over $A$. We shall define a natural transformation $\phi_{01}^\sharp\colon \phi_0^\sharp \to \phi_1^\sharp$ in $\FIB{B}\prn{\OplSum{p}\prn{E},F}$.
        Given a displayed object $\gl{a,e}\in\OplSum{p}\prn{E}$ over $b\in B$, so that $e$ lies over $a$ and $p\prn{a}=b$, we define the component $\prn{\phi^\sharp_{01}}_b^{\gl{a,e}}\colon \prn{\phi_0^\sharp}_b\gl{a,e}\to \prn{\phi_1^\sharp}_b\gl{a,e}$ as follows:
        \begin{align*}
          \prn{\phi^\sharp_{01}}_b^{\gl{a,e}}
          &\colon
          \prn{\phi_0}_a\prn{e}\gl{a,1_a} \to \prn{\phi_1}_a\prn{e}\gl{a,1_a}
          \\
          \prn{\phi^\sharp_{01}}_b^{\gl{a,e}} &\coloneq
          \prn{\phi_{01}}_a\prn{e}\gl{a,1_a}
        \end{align*}

        Naturality of this assignment follows from that of $\phi_{01}$.
      \end{con}

      \bigskip\noindent
      We have defined a functor \[{\sharp_{E,F}}\colon \FIB{A}\prn{E,\OplDiag{p}\prn{F}} \to \FIB{B}\prn{\OplSum{p}\prn{E},F}\text{,}\] setting $\sharp_{E,F}\prn{\phi} :\equiv \phi^\sharp$.
    \end{xsect}

    \begin{xsect}{Pseudo-naturality of transposition}
      The assignment $\prn{E,F}\mapsto \sharp_{E,F}$ can be extended to a pseudo-natural transformation in $\FIB{A}\Op\times \FIB{B}$. Luckily, each of the following pseudo-naturality squares commutes, in fact, on the nose:
      \[
        \DiagramSquare{
          nw = \FIB{A}\prn{E_0,\OplDiag{p}\prn{F_0}},
          ne = \FIB{B}\prn{\OplSum{p}\prn{E_0},F_0},
          sw = \FIB{A}\prn{E_1,\OplDiag{p}\prn{F_1}},
          se = \FIB{B}\prn{\OplSum{p}\prn{E_1},F_1},
          north = \sharp_{E_0,F_0},
          south = \sharp_{E_1,F_1},
          west = \FIB{A}\prn{E_{10},\OplDiag{p}\prn{F_{01}}},
          east = \FIB{B}\prn{\OplSum{p}\prn{E_{10}},F_{01}},
          width = 5cm,
        }
      \]

      This is immediate by unraveling definitions. Thus, the coherence laws for pseudo-naturality data hold as well.
    \end{xsect}

    \begin{xsect}{Pseudo-invertibility of transposition}
      Let $E\in \FIB{A}$ and $F\in\FIB{B}$ be fibred categories.
      In this section, we will show that the transpose functor ${\sharp_{E,F}}\colon \FIB{A}\prn{E,\OplDiag{p}\prn{F}} \to \FIB{B}\prn{\OplSum{p}\prn{E},F}$ is an equivalence of categories. It is worth noting that the pseudo-inverse is \emph{not} a strict inverse; this is the reason why we ultimately get a pseudo-adjunction rather than a 2-adjunction.

      \begin{con}[Action of reverse transposition on objects]
        Given a fibred functor $\phi\colon \OplSum{p}\prn{E}\to F$ over $B$, we define a displayed functor $\phi^\flat\colon E\to\OplDiag{p}\prn{F}$ over $A$ acting as follows:
        \begin{itemize}[label=$\triangleright$]
          \item \emph{On objects.} Given $e\in E$ over $a\in A$, we define a fibred functor $\phi^\flat\prn{e}\colon \OplSum{p}\prn{\Yo{A}\prn{a}}\to F$ as follows:
          \begin{itemize}
            \item \emph{On objects.} Given $\gl{a',f}\in\OplSum{p}\prn{\Yo{A}\prn{a}}$, we define $\phi^\flat\prn{e}\gl{a',f} = \phi\gl{a',\Lift{E}{f}{e}}$.
            \item \emph{On arrows.} Given $\gl{a'_{01},f_{01}}\colon \gl{a'_0,f_0}\to \gl{a'_1,f_1}$ in $\OplSum{p}\prn{\Yo{A}\prn{a}}$ over $b'_{01}\colon b'_0\to b'_1$,\footnote{Here, $f_{01}$ is the ``name'' of the triangle witnessing that $f_1\circ a'_{01}=f_0$.} we define the functorial action $\phi^\flat\prn{e}\gl{a'_{01},f_{01}}\colon \phi\gl{a'_0,\Lift{E}{f_0}{e}}\to \phi\gl{a'_1,\Lift{E}{f_1}{e}}$ to be the functorial action of $\phi$ on the arrow $\gl{a'_{01},\Lift{E}{f_{01}}{e}}\colon \gl{a'_{0},\Lift{E}{f_{0}}{e}}\to \gl{a'_{1},\Lift{E}{f_{1}}{e}}$ where $\Lift{E}{f_{01}}{e}$ is the following canonical comparison map:
            \[
              \begin{tikzpicture}[diagram]
                \SpliceDiagramSquare<l/>{
                  nw = \Lift{E}{f_0}{e},
                  ne = \Lift{E}{f_1}{e},
                  sw = a'_0,
                  se = a'_1,
                  south = a'_{01},
                  north = \Lift{E}{f_{01}}{e},
                  north/style = {->,exists},
                  nw/style = pullback,
                  ne/style = pullback,
                  width = 3cm,
                  height = 1.5cm,
                  north/node/style = upright desc,
                  west/style= {|->},
                  east/style = {|->},
                }
                \SpliceDiagramSquare<r/>{
                  glue = west,
                  glue target = l/,
                  ne = e,
                  se = a,
                  south = f_1,
                  north = \LiftArr{f_1}{e},
                  east/style = {|->},
                  height = 1.5cm,
                  north/node/style = upright desc,
                }
                \draw[->,bend left=30] (l/nw) to node[above] {$\LiftArr{E}{f_0}{e}$}  (r/ne);
              \end{tikzpicture}
            \]

            In other words, we have defined $\phi^\flat\prn{e}\gl{a'_{01},f_{01}} = \phi\prn{\Lift{E}{f_{01}}{e}}$. This definition makes $\phi^\flat\prn{e}\colon \OplSum{p}\prn{\Yo{A}\prn{a}}\to F$ into a functor due to the functoriality of $\phi$ and of the lifting $\Lift{E}{-}{e}$.
            \item \emph{On cartesian arrows.}
              Let $\gl{a'_{01},f_{01}}\colon \gl{a'_0,f_0}\to \gl{a'_1,f_1}$ be cartesian in $\OplSum{p}\prn{\Yo{A}\prn{a}}$, which amounts to the following data:
              \[
                \begin{tikzpicture}[diagram]
                  \SpliceDiagramSquare<n/>{
                    nw = f_0,
                    ne = f_1,
                    north = f_{01},
                    sw = a'_0,
                    se = a'_1,
                    south = a'_{01},
                    nw/style = pullback,
                    sw/style = pullback,
                    south/node/style = upright desc,
                    height = 1.5cm,
                    width = 2.5cm,
                    west/style = {|->},
                    east/style = {|->}
                  }
                  \node (s/sw) [below = 1.5cm of n/sw] {$b'_0$};
                  \node (s/se) [below = 1.5cm of n/se] {$b'_1$};
                  \draw[->] (s/sw) to node[below] {$b'_{01}$} (s/se);
                  \draw[{|->}] (n/sw) to (s/sw);
                  \draw[{|->}] (n/se) to (s/se);
                \end{tikzpicture}
              \]

              To see that $\phi^\flat\prn{e}\gl{a'_{01},f_{01}} = \phi\prn{\Lift{E}{f_{01}}{e}}$ is a cartesian arrow, we recall that $\phi\colon \OplSum{p}\prn{E}\to F$ is fibred, so it suffices to observe that the comparison map $\Lift{E}{f_{01}}{e}\colon \Lift{E}{f_0}{e}\to \Lift{E}{f_1}{e}$ is cartesian by the horizontal pasting lemma for cartesian arrows.
          \end{itemize}

          \item \emph{On arrows.} Given $e_{01}\colon e_0\to e_1$ over $a_{01}\colon a_0\to a_1$, we define $\phi^\flat\prn{e_{01}}\colon \phi^\flat\prn{e_0}\to\phi^\flat\prn{e_1}$ in $\OplDiag{p}\prn{F}$ over $a_{01}$ as follows. Recalling the definition of displayed arrows in $\OplDiag{p}\prn{F}$ we must construct a displayed natural transformation $\phi^\flat\prn{e_{01}}\colon \phi^\flat\prn{e_0}\to \phi^\flat\prn{e_1}\circ \OplSum{p}\prn{\Yo{A}\prn{a_{01}}}$ in $\FIB{B}\prn{\OplSum{p}\prn{\Yo{A}\prn{a_0}},F}$.
          Given $\gl{a',f}\in\OplSum{p}\prn{\Yo{A}\prn{a_0}}$ so that we have $f\colon a'\to a_0$, we consider the following vertical comparison map between cartesian lifts:
          \[
            \begin{tikzpicture}[diagram]
              \node (f*e0) {$\Lift{E}{f}{e_0}$};
              \node[right = 2.5cm of f*e0] (e0) {$e_0$};
              \draw[->] (f*e0) to node[above] {$\LiftArr{E}{f}{e_0}$} (e0);

              \node[pullback, below = of f*e0] (a01f*e1) {$\Lift{E}{\prn{a_{01}\circ f}}{e_1}$};
              \node[right = 5cm of a01f*e1] (e1) {$e_1$};
              \draw[->] (e0) to node[sloped,above] {$e_{01}$} (e1);
              \draw[->] (a01f*e1) to node[upright desc] {$\LiftArr{E}{\prn{a_{01}\circ f}}{e_1}$} (e1);
              \draw[->,exists] (f*e0) to node[left] {$\Lift{E}{f}{e_{01}}$} (a01f*e1);

              \node[below = 1.5cm of a01f*e1] (a') {$a'$};
              \node[right = 2.5cm of a'] (a0) {$a_0$};
              \node[right = 5cm of a'] (a1) {$a_1$};
              \draw[->] (a') to node[below] {$f$} (a0);
              \draw[->] (a0) to node[below] {$a_{01}$} (a1);
              \draw[{|->}] (a01f*e1) to (a');
              \draw[{|->}] (e1) to (a1);
            \end{tikzpicture}
          \]

          Using the above, we define the desired functorial arrow as follows:
          \begin{align*}
            \phi^\flat\prn{e_{01}}\Sub{\gl{a',f}}
            &\colon
            \phi\gl{a',\Lift{E}{f}{e_0}}
            \to
            \phi\gl{a',\Lift{E}{a_{01}\circ f}{e_1}}
            \\
            \phi^\flat\prn{e_{01}}\Sub{\gl{a',f}}
            &=
            \phi\gl{1_{a'}, \Lift{E}{f}{e_{01}}}
          \end{align*}
          This definition makes $\phi^\flat\colon E\to\OplDiag{p}\prn{F}$ into a functor due to the functoriality of the lifting $\Lift{E}{f}{-}$. Moreover, the assignment above is natural.
          \begin{proof}
            To check naturality, fix $\gl{a'_{01},f_{01}}\colon \gl{a'_0,f_0}\to \gl{a'_1,f_1}$ in $\OplSum{p}\prn{\Yo{A}\prn{a}}$ to check that the following square commutes in $F$:
            \[
              \DiagramSquare{
                width = 5cm,
                north = \phi\gl{1_{a'_0}, \Lift{E}{f_0}{e_{01}}},
                nw = \phi\gl{a'_0,\Lift{E}{f_0}{e_0}},
                ne = \phi\gl{a'_0,\Lift{E}{\prn{a_{01}\circ f_0}}{e_1}},
                sw = \phi\gl{a'_1,\Lift{E}{f_1}{e_0}},
                se = \phi\gl{a'_1,\Lift{E}{\prn{a_{01}\circ f_1}}{e_1}},
                west = \phi\gl{a'_{01},\Lift{E}{f_{01}}{e_0}},
                east = \phi\gl{a'_{01},\Lift{E}{f_{01}}{e_1}},
                south = \phi\gl{1_{a'_1}, \Lift{E}{f_1}{e_{01}}},
              }
            \]

            It suffices to check that the following commutes in $E$:
            \[
              \DiagramSquare{
                width = 4cm,
                nw = \Lift{E}{f_0}{e_0},
                ne = \Lift{E}{\prn{a_{01}\circ f_0}}{e_1},
                sw = \Lift{E}{f_1}{e_0},
                se = \Lift{E}{\prn{a_{01}\circ f_1}}{e_1},
                west = \Lift{E}{f_{01}}{e_0},
                north = \Lift{E}{f_0}{e_{01}},
                south = \Lift{E}{f_1}{e_{01}},
                east = \Lift{E}{\prn{a_{01}\cdot f_{01}}}{e_1},
              }
            \]

            By the universal property of $\Lift{E}{a_{01}\circ f}{e_1}$ as a cartesian lift, it suffices to check that the whiskering of the diagram above with $\LiftArr{E}{a_{01}\circ f_1}{e_1}\colon \Lift{E}{a_{01}\circ f_1}{e_1}\to e_1$ commutes. By unraveling definitions and calculating, the upper and lower composites can both be simplified to $e_{01}\circ \LiftArr{E}{f_0}{e_0}\colon \Lift{E}{f_0}{e_0}\to e_1$.
          \end{proof}

          \item \emph{On cartesian arrows.} Now fix a \emph{cartesian} arrow $e_{01}\colon e_0\to e_1$ in $E$  over $a_{01}\colon a_0\to a_1$ in $A$:
          \[
            \DiagramSquare{
              nw/style = pullback,
              nw = e_0,
              ne = e_1,
              sw = a_0,
              se = a_1,
              north = e_{01},
              south = a_{01},
              west/style = {|->},
              east/style = {|->},
              height = 1.5cm,
            }
          \]

          We must check that $\phi^\flat\prn{e_{01}}\colon \phi^\flat\prn{e_0}\to \phi^\flat\prn{e_1}$ is cartesian in $\OplDiag{p}\prn{F}$. This is the same as the underlying natural transformation $\phi^\flat\prn{e_{01}}\colon \phi^\flat\prn{e_0}\to \phi^\flat\prn{e_1}\circ\OplSum{p}\prn{\Yo{A}\prn{a_{01}}}$ being invertible. Fixing $f\colon a'\to a_0$ so that we have $\gl{a',f}\in\OplSum{p}\prn{\Yo{A}\prn{a_0}}$, we must check that the following component in $F$ is invertible:
          \begin{align*}
            \phi^\flat\prn{e_{01}}\Sub{\gl{a',f}}
            &\colon
            \phi\gl{a',\Lift{E}{f}{e_0}}
            \to
            \phi\gl{a',\Lift{E}{a_{01}\circ f}{e_1}}
            \\
            \phi^\flat\prn{e_{01}}\Sub{\gl{a',f}}
            &=
            \phi\gl{1_{a'}, \Lift{E}{f}{e_{01}}}
          \end{align*}

          It is enough to observe that universal gap maps, such as the map $\Lift{E}{f}{e_{01}}\colon \Lift{E}{f}{e_0}\to \Lift{E}{\prn{a_{01}\circ f}}{e_1}$ as defined above, are invertible by the uniqueness of cartesian lifts up to vertical isomorphism.
          \[
            \begin{tikzpicture}[diagram]
              \SpliceDiagramSquare<l/>{
                nw/style = pullback,
                ne/style = pullback,
                nw = \Lift{E}{f}{e_0},
                ne = e_0,
                sw = a',
                se = a_0,
                north = \LiftArr{E}{f}{e_0},
                south = f,
                west/style = {|->},
                east/style = {|->},
                height = 1.5cm,
              }
              \SpliceDiagramSquare<r/>{
                glue = west, glue target = l/,
                ne = e_1,
                se = a_1,
                north = e_{01},
                south = a_{01},
                east/style = {|->},
                height = 1.5cm,
              }
            \end{tikzpicture}
          \]
        \end{itemize}
      \end{con}

      \bigskip

      \begin{xsect}{Transpose is an equivalence}
        We now show that each transpose functor \[ \sharp_{E,F}\colon\FIB{A}\prn{E,\OplDiag{p}\prn{F}}\to \FIB{B}\prn{\OplSum{p}\prn{E},F}\]  is an equivalence.

        \begin{lem}
          The transpose functor $\sharp_{E,F}$ is full.
        \end{lem}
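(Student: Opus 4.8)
The plan is to unfold fullness by hand. We fix fibred functors $\phi_0,\phi_1\colon E\to\OplDiag{p}\prn{F}$ over $A$ and a vertical natural transformation $\psi\colon \phi_0^\sharp\to\phi_1^\sharp$ in $\FIB{B}\prn{\OplSum{p}\prn{E},F}$, and we must produce a natural transformation $\bar\psi\colon\phi_0\to\phi_1$ in $\FIB{A}\prn{E,\OplDiag{p}\prn{F}}$ with $\bar\psi^\sharp=\psi$. The one fact we need beyond the definitions is the following: for $e\in E$ over $a\in A$ and any $f\colon a'\to a$ in $A$, fibredness of $\phi_i$ carries the cartesian arrow $\LiftArr{E}{f}{e}\colon \Lift{E}{f}{e}\to e$ over $f$ to a cartesian displayed arrow of $\OplDiag{p}\prn{F}$ over $f$, and by the corollary following \autoref{lem:opl-diag-splitting} the latter is an \emph{invertible} 2-cell $\phi_i\prn{\Lift{E}{f}{e}}\Rightarrow\phi_i\prn{e}\circ\OplSum{p}\prn{\Yo{A}\prn{f}}$ in $\FIB{B}\prn{\OplSum{p}\prn{\Yo{A}\prn{a'}},F}$. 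Evaluating this 2-cell at $\gl{a',1_{a'}}$, and using both $\OplSum{p}\prn{\Yo{A}\prn{f}}\gl{a',1_{a'}}=\gl{a',f}$ and the defining equation $\phi_i^\sharp\gl{a',e''}=\phi_i\prn{e''}\gl{a',1_{a'}}$ for $e''\in E$ over $a'$, we obtain a canonical vertical isomorphism $\theta^{i}_{e,f}\colon \phi_i^\sharp\gl{a',\Lift{E}{f}{e}}\to\phi_i\prn{e}\gl{a',f}$ in $F$.

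We then define $\bar\psi_e\colon\phi_0\prn{e}\to\phi_1\prn{e}$ — a natural transformation of fibred functors $\OplSum{p}\prn{\Yo{A}\prn{a}}\to F$ — by taking its component at $\gl{a',f}$ to be the composite $\theta^{1}_{e,f}\circ\psi_{\gl{a',\Lift{E}{f}{e}}}\circ\prn{\theta^{0}_{e,f}}^{-1}$. Three checks remain. First, $\bar\psi_e$ must be natural in $\gl{a',f}$: an arrow $\gl{a'_{01},f_{01}}$ of $\OplSum{p}\prn{\Yo{A}\prn{a}}$ lifts to the arrow $\gl{a'_{01},\Lift{E}{f_{01}}{e}}$ of $\OplSum{p}\prn{E}$ (the very arrow appearing in the reverse transpose), so naturality of $\bar\psi_e$ follows from naturality of $\psi$ along that arrow, once we know that the family $\theta^{i}$ is compatible with the functorial actions of $\phi_i\prn{e}$ and of $\phi_i$ on cartesian lifts. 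Second, $\bar\psi$ must be natural in $e$: for a displayed arrow $e_{01}\colon e_0\to e_1$ over $a_{01}$ we need the displayed naturality square in $\OplDiag{p}\prn{F}$, and since a morphism of $\OplDiag{p}\prn{F}$ is a natural transformation tested componentwise at the $\gl{a',f}$, this reduces to naturality of $\psi$ along the arrows $\gl{1_{a'},\Lift{E}{f}{e_{01}}}$ of $\OplSum{p}\prn{E}$, again modulo the compatibility of the $\theta^{i}$. Third, $\bar\psi^\sharp=\psi$: by the definition of $\sharp$ on 2-cells we have $\prn{\bar\psi^\sharp}_b^{\gl{a,e}}=\prn{\bar\psi_e}_{\gl{a,1_a}}=\theta^{1}_{e,1_a}\circ\psi_{\gl{a,\Lift{E}{1_a}{e}}}\circ\prn{\theta^{0}_{e,1_a}}^{-1}$, and this equals $\psi_{\gl{a,e}}$ — taking the cleaving of $E$ normalised makes each $\theta^{i}_{e,1_a}$ an identity outright, and in general one invokes naturality of $\psi$ along the cartesian vertical isomorphism $\gl{a,\Lift{E}{1_a}{e}}\cong\gl{a,e}$ in $\OplSum{p}\prn{E}$.

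The substantive part — and what I expect to be the main obstacle — is establishing the compatibility of the comparison isomorphisms $\theta^{i}$ that the first two checks rely on: one must show once and for all that $\theta^{i}$ is natural with respect to arrows of the slices, so that conjugating $\psi$ by $\theta^{0}$ and $\theta^{1}$ really does yield a natural transformation. This is a diagram chase using only the uniqueness of cartesian factorisations in $E$ and the horizontal pasting lemma for cartesian arrows in $\OplSum{p}\prn{E}$ — exactly the tools already used to build the reverse transpose — so it is conceptually routine but is where the bookkeeping lives; everything else is unravelling the definitions of $\sharp$, of the split cleaving of \autoref{lem:opl-diag-splitting}, and of the fibredness of $\phi_0$ and $\phi_1$.
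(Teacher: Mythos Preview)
Your argument is correct, but the paper takes a more direct route that avoids the $\theta^i$ conjugation machinery entirely. Rather than using the fibredness of $\phi_i\colon E\to\OplDiag{p}\prn{F}$ applied to cartesian lifts $\Lift{E}{f}{e}\to e$ in $E$, the paper instead uses the fibredness of each $\phi_i\prn{e}\colon \OplSum{p}\prn{\Yo{A}\prn{a}}\to F$ (which is part of what it \emph{means} for $\phi_i\prn{e}$ to be an object of $\OplDiag{p}\prn{F}$). Since $\gl{a',f}$ is the cartesian lift of $\gl{a,1_a}$ in $\OplSum{p}\prn{\Yo{A}\prn{a}}$, the image arrows $\phi_i\prn{e}\gl{a',f}\to\phi_i\prn{e}\gl{a,1_a}$ are cartesian in $F$; the component $\prn{\phi_{01}^e}_{\gl{a',f}}$ is then obtained directly from $\hat\phi_{01}^{\gl{a,e}}$ by the universal property of that cartesian arrow in $F$, with no conjugation needed.

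The payoff is that the paper's construction sidesteps exactly the bookkeeping you flag as the main obstacle: there are no comparison isomorphisms $\theta^i$ to prove coherent, because the components are defined uniquely by a universal property rather than by transport along isomorphisms. Your route essentially rediscovers (and then has to verify the coherence of) part of the reverse-transpose construction $\flat$, whereas the paper's route needs only that objects of $\OplDiag{p}\prn{F}$ are fibred functors. Both work, but the paper's argument is shorter and puts the weight on a cleaner universal property.
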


        \begin{proof}
          Fix fibred functors $\phi_0,\phi_1\colon E\to \OplDiag{p}\prn{F}$ over $A$, and a fibred natural transformation $\hat{\phi}_{01} \colon \phi_0^\sharp\to \phi_1^\sharp$. We must find $\phi_{01}\colon \phi_0\to \phi_1$ such that $\phi_{01}^\sharp = \hat{\phi}_{01}$.

          Fixing $e\in E$ over $a$ and $f\colon a'\to a$, we must define $\prn{\phi_{01}^e}_{\gl{a',f}}\colon \phi_0\prn{e}\gl{a',f}\to \phi_1\prn{e}\gl{a',f}$.
          Because each $\phi_i\prn{e}$ is a fibred functor, the following is cartesian as $\gl{a',f}$ is the cartesian lift $\Lift{\OplSum{p}\prn{\Yo{A}\prn{a}}}{f}{\gl{a,1_a}}$.
          \[
            \DiagramSquare{
              nw/style = pullback,
              west/style = {|->},
              east/style = {|->},
              height = 1.5cm,
              width = 4.5cm,
              nw = \phi_i\prn{e}\gl{a',f},
              ne = \phi_i\prn{e}\gl{a,1_a},
              sw = a',
              se = a,
              south = f,
              north = \phi_i\prn{e}\prn{1_{\gl{a',f}}},
            }
          \]

          We already have $\hat{\phi}_{01}^{\gl{a,e}}\colon \phi_0\prn{e}\gl{a,1_a}\to \phi_1\prn{e}\gl{a,1_a}$ by assumption. We can use this together with the universal property of the cartesian arrow depicted above to define $\prn{\phi_{01}^e}_{\gl{a',f}}\colon \phi_0\prn{e}\gl{a',f}\to \phi_1\prn{e}\gl{a',f}$ as follows:
          \[
            \DiagramSquare{
              nw = \phi_0\prn{e}\gl{a',f},
              ne = \phi_0\prn{e}\gl{a,1_a},
              sw = \phi_1\prn{e}\gl{a',f},
              se = \phi_1\prn{e}\gl{a,1_a},
              west/style = {exists, ->},
              width = 5cm,
              north = \phi_0\prn{e}\prn{1_{\gl{a',f}}},
              south = \phi_1\prn{e}\prn{1_{\gl{a',f}}},
              west = \prn{\phi_{01}^e}_{\gl{a',f}},
              east = \hat{\phi}_{01}^{\gl{a,e}},
            }
          \]

          We must check that $\phi_{01}^\sharp = \hat{\phi}_{01}$. Fixing $\gl{a,e}\in \OplSum{p}\prn{E}$ over $b$ so that $e$ lies over $a$ and $p\prn{a}=b$, we must check that $\prn{\phi_{01}^e}_{\gl{a,1_a}} = \hat{\phi}_{01}^{\gl{a,e}}$. By the universal property of the cartesian lift, it suffices to check that the following square commutes:
          \[
            \DiagramSquare{
              width = 5cm,
              nw = \phi_0\prn{e}\gl{a,1_a},
              ne = \phi_0\prn{e}\gl{a,1_a},
              sw = \phi_1\prn{e}\gl{a,1_a},
              se = \phi_1\prn{e}\gl{a,1_a},
              north = \phi_0\prn{e}\prn{1_{\gl{a,1_a}}},
              south = \phi_1\prn{e}\prn{1_{\gl{a,1_a}}},
              west = \hat{\phi}_{01}^{\gl{a,e}},
              east = \hat{\phi}_{01}^{\gl{a,e}}
            }
          \]
          But this is a naturality square for $\hat{\phi}_{01}\colon \phi_0^\sharp\to \phi_1^\sharp$.
        \end{proof}

        \begin{lem}
          The transpose functor ${\sharp_{E,F}}$ is faithful.
        \end{lem}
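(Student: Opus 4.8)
The plan is to show $\sharp_{E,F}$ is injective on each hom-set: given parallel fibred natural transformations $\phi_{01},\psi_{01}\colon\phi_0\to\phi_1$ over $A$ with $\phi_{01}^\sharp=\psi_{01}^\sharp$, I would prove $\phi_{01}=\psi_{01}$ by showing that a fibred natural transformation is \emph{completely determined} by its transpose, so that the construction carried out in the proof of the preceding (fullness) lemma is already forced. Unwinding the action of $\sharp_{E,F}$ on arrows, the datum of $\phi_{01}^\sharp$ is precisely the family of arrows $\phi_{01}\prn{e}\gl{a,1_a}\colon\phi_0\prn{e}\gl{a,1_a}\to\phi_1\prn{e}\gl{a,1_a}$ in $F$, indexed by $a\in A$ and $e\in E$ over $a$; here $\phi_{01}\prn{e}$ is the component of $\phi_{01}$ at $e$, which by \autoref{con:oplax-base-change} is a fibred natural transformation $\phi_0\prn{e}\Rightarrow\phi_1\prn{e}$ in $\FIB{B}\prn{\OplSum{p}\prn{\Yo{A}\prn{a}},F}$. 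So it suffices to recover the value $\phi_{01}\prn{e}\gl{a',f}$ of this component at an arbitrary displayed object $\gl{a',f}\in\OplSum{p}\prn{\Yo{A}\prn{a}}$ (with $f\colon a'\to a$) from that restricted data.

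For this I would reuse the key observation of the fullness proof: the canonical arrow $u\colon\gl{a',f}\to\gl{a,1_a}$ of $\OplSum{p}\prn{\Yo{A}\prn{a}}$ is cartesian --- being the cartesian lift of the terminal object $\gl{a,1_a}$ along the image of $f$ --- so since $\phi_1\prn{e}$ is a fibred functor, $\phi_1\prn{e}\prn{u}$ is cartesian in $F$; naturality of $\phi_{01}\prn{e}$ at $u$ then exhibits $\phi_{01}\prn{e}\gl{a',f}$ as the \emph{unique} vertical factorisation of $\phi_{01}\prn{e}\gl{a,1_a}\circ\phi_0\prn{e}\prn{u}$ through $\phi_1\prn{e}\prn{u}$. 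This factorisation problem depends only on $\phi_0$, $\phi_1$ and on $\phi_{01}\prn{e}\gl{a,1_a}$, and the last of these is exactly the value of $\phi_{01}^\sharp$ at $\gl{a,e}$; hence $\phi_{01}^\sharp=\psi_{01}^\sharp$ forces $\phi_{01}\prn{e}\gl{a',f}=\psi_{01}\prn{e}\gl{a',f}$ for all $a$, $e$, $f$, so $\phi_{01}=\psi_{01}$, since parallel 2-cells in $\FIB{A}$ are determined componentwise.

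The main obstacle --- latent already in the fullness proof --- is securing the cartesianness of $u\colon\gl{a',f}\to\gl{a,1_a}$ for \emph{every} $f\colon a'\to a$, not only for $p$-cartesian $f$. To treat the general case I would instead invoke naturality of $\phi_{01}$ \emph{in $E$} at the chosen cartesian lift $\LiftArr{E}{f}{e}\colon\Lift{E}{f}{e}\to e$ of $e$ along $f$, and combine it with the preservation of cartesian arrows by $\phi_0$ and $\phi_1$ and with the explicit split cleaving of $\OplDiag{p}\prn{F}$ from \autoref{lem:opl-diag-splitting}: this identifies the $\OplSum{p}\prn{\Yo{A}\prn{f}}$-whiskering of $\phi_{01}\prn{e}$ with $\phi_{01}\prn{\Lift{E}{f}{e}}$ up to vertical isomorphisms depending only on $\phi_0$, $\phi_1$ and the chosen cleavings, never on $\phi_{01}$. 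Evaluating that identification at $\gl{a',1_{a'}}$ --- using that $\OplSum{p}\prn{\Yo{A}\prn{f}}$ carries $\gl{a',1_{a'}}$ to $\gl{a',f}$ --- once more expresses $\phi_{01}\prn{e}\gl{a',f}$ through the component $\phi_{01}\prn{\Lift{E}{f}{e}}\gl{a',1_{a'}}$ of $\phi_{01}^\sharp$, and the remaining checks are routine bookkeeping, most transparent in the surface calculus.
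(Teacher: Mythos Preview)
Your first two paragraphs reproduce the paper's argument almost verbatim: use naturality of the component $\phi_{01}\prn{e}$ at the terminal arrow $u\colon\gl{a',f}\to\gl{a,1_a}$ together with cartesianness of $\phi_1\prn{e}\prn{u}$ in $F$ to pin down $\phi_{01}\prn{e}\gl{a',f}$ as the unique vertical factorisation. That is exactly the paper's strategy.

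Your third paragraph goes beyond the paper. You correctly observe that $u$ is cartesian in $\OplSum{p}\prn{\Yo{A}\prn{a}}$ \emph{over $B$} only when $f\colon a'\to a$ is itself $p$-cartesian---indeed the paper says precisely this earlier, in the proof that $\phi^\sharp$ is fibred---whereas the faithfulness proof (and the fullness proof) silently asserts it for arbitrary $f$. Your repair, replacing naturality of $\phi_{01}\prn{e}$ in the slice by naturality of $\phi_{01}$ in $E$ at the $A$-cartesian lift $\LiftArr{E}{f}{e}\colon\Lift{E}{f}{e}\to e$, is sound: since $\phi_0,\phi_1$ are fibred over $A$, the displayed arrows $\phi_i\prn{\LiftArr{E}{f}{e}}$ are cartesian in $\OplDiag{p}\prn{F}$, hence their underlying 2-cells are invertible by the split cleaving of \autoref{lem:opl-diag-splitting}; evaluating the resulting naturality identity at $\gl{a',1_{a'}}$ expresses $\phi_{01}\prn{e}\gl{a',f}$ in terms of the $\phi_{01}^\sharp$-component at $\gl{a',\Lift{E}{f}{e}}$ and isomorphisms depending only on $\phi_0,\phi_1$. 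So your argument follows the paper's line but is more careful at this one point, and the extra care is warranted.
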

        \begin{proof}
          We fix a pair of displayed natural transformations $\phi_{01},\psi_{01}\colon\phi_0\to\phi_1$ between fibred functors $\phi_0,\phi_1\colon E\to \OplDiag{p}\prn{F}$ over $A$  such that $\phi_{01}^\sharp = \psi_{01}^\sharp$. We need to check that $\phi_{01} = \psi_{01}$, \ie that for any $e\in E$ over $a\in A$ we have $\phi_{01}\prn{e} = \psi_{01}\prn{e}$ in $\OplDiag{p}\prn{F}$. Fixing $\gl{a',f}\in \OplSum{p}\prn{\Yo{A}\prn{a}}$ over $p\prn{a'}$, we must check that $\phi_{01}\prn{e}\gl{a',f} = \psi_{01}\prn{e}\gl{a',f}$.

          But $\gl{a',f}$ is the cartesian lift $\Lift{\OplSum{p}\prn{\Yo{A}\prn{a}}}{f}{\gl{a,1_a}}$. Because $\phi_0\prn{e}$ and $\phi_1\prn{e}$ are fibred functors, the following are cartesian:
          \[
            \DiagramSquare{
              nw/style = pullback,
              nw = \phi_i\prn{e}\gl{a',f},
              ne = \phi_i\prn{e}\gl{a,1_a},
              sw = a',
              se = a,
              north = \phi_i\prn{e}\prn{1_{\gl{a',f}}},
              south = f,
              west/style = {|->},
              east/style = {|->},
              height = 1.5cm,
              width = 4.5cm,
            }
          \]
          and thus in the following naturality square, $\phi_{01}\prn{e}\gl{a',f}$ is the \emph{unique} vertical map that can be configured like so:
          \[
            \DiagramSquare{
              nw = \phi_0\prn{e}\gl{a',f},
              ne = \phi_0\prn{e}\gl{a,1_a},
              sw = \phi_1\prn{e}\gl{a',f},
              se = \phi_1\prn{e}\gl{a,1_a},
              north = \phi_0\prn{e}\prn{1_{\gl{a',f}}},
              west = \phi_{01}\prn{e}\gl{a',f},
              east = \phi_{01}\prn{e}\gl{a,1_a},
              south = \phi_1\prn{e}\prn{1_{\gl{a',f}}},
              width = 4.5cm,
            }
          \]

          Thus, it suffices to check that the following square commutes:
          \[
            \DiagramSquare{
              nw = \phi_0\prn{e}\gl{a',f},
              ne = \phi_0\prn{e}\gl{a,1_a},
              sw = \phi_1\prn{e}\gl{a',f},
              se = \phi_1\prn{e}\gl{a,1_a},
              north = \phi_0\prn{e}\prn{1_{\gl{a',f}}},
              west = \psi_{01}\prn{e}\gl{a',f},
              east = \phi_{01}\prn{e}\gl{a,1_a},
              south = \phi_1\prn{e}\prn{1_{\gl{a',f}}},
              width = 4.5cm,
            }
          \]

          Because $\phi_{01}^\sharp = \psi_{01}^\sharp$, we have $\phi_{01}\prn{e}\gl{a,1_a} = \psi_{01}\prn{e}\gl{a,1_a}$; thus it suffices to check that the following square commutes:
          \[
            \DiagramSquare{
              nw = \phi_0\prn{e}\gl{a',f},
              ne = \phi_0\prn{e}\gl{a,1_a},
              sw = \phi_1\prn{e}\gl{a',f},
              se = \phi_1\prn{e}\gl{a,1_a},
              north = \phi_0\prn{e}\prn{1_{\gl{a',f}}},
              west = \psi_{01}\prn{e}\gl{a',f},
              east = \psi_{01}\prn{e}\gl{a,1_a},
              south = \phi_1\prn{e}\prn{1_{\gl{a',f}}},
              width = 4.5cm,
            }
          \]
          This is a naturality square for $\psi_{01}\prn{e}$, so we are done.
        \end{proof}

        \begin{lem}
          The reverse transpose operation exhibits the transpose functor $\sharp_{E,F}$ as \emph{(split) essentially surjective}.
        \end{lem}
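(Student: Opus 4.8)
The plan is to show that for every fibred functor $\phi\colon \OplSum{p}\prn{E}\to F$ over $B$, the reverse transpose $\phi^\flat\colon E\to\OplDiag{p}\prn{F}$ constructed above satisfies $\sharp_{E,F}\prn{\phi^\flat} = \prn{\phi^\flat}^\sharp\cong\phi$, and to exhibit a \emph{chosen} isomorphism $\theta_\phi$ witnessing this. Since $\flat$ is a genuine operation on objects rather than a mere existence statement, the pair $\phi\mapsto\prn{\phi^\flat,\theta_\phi}$ is precisely what it means for $\sharp_{E,F}$ to be split essentially surjective. It is worth noting at the outset that $\theta_\phi$ is not in general an identity: unwinding the definitions gives $\prn{\phi^\flat}^\sharp\gl{a,e} = \phi^\flat\prn{e}\gl{a,1_a} = \phi\gl{a,\Lift{E}{1_a}{e}}$, and $\Lift{E}{1_a}{e}$ need not be equal to $e$ unless the cleaving of $E$ is split. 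This is exactly the reason that $\flat$ is only a pseudo-inverse to $\sharp$, and hence that we ultimately obtain a pseudo-adjunction rather than a 2-adjunction.

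The components of $\theta_\phi$ are built from the identity-reindexing isomorphisms of the cleaving. Since $\LiftArr{E}{1_a}{e}\colon \Lift{E}{1_a}{e}\to e$ is cartesian over the identity $1_a$, it is a vertical isomorphism, so $\gl{1_a,\LiftArr{E}{1_a}{e}}$ is a vertical isomorphism $\gl{a,\Lift{E}{1_a}{e}}\to\gl{a,e}$ in $\OplSum{p}\prn{E}$; we define the component of $\theta_\phi\colon\prn{\phi^\flat}^\sharp\to\phi$ at $\gl{a,e}$ to be $\phi\gl{1_a,\LiftArr{E}{1_a}{e}}$, which is an isomorphism in $F$ because $\phi$ is a functor.

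The one nontrivial check is naturality of $\theta_\phi$. For a displayed arrow $\gl{a_{01},e_{01}}\colon\gl{a_0,e_0}\to\gl{a_1,e_1}$ I would expand $\prn{\phi^\flat}^\sharp\gl{a_{01},e_{01}}$ by threading through the formula for $\sharp$ on arrows together with the object- and arrow-level formulas for $\phi^\flat$. The transposition-on-arrows formula presents it as the composite of $\phi^\flat\prn{e_1}$ applied to the terminal map $!_{\gl{a_0,a_{01}}}$ of $\OplSum{p}\prn{\Yo{A}\prn{a_1}}$ with $\phi^\flat\prn{e_{01}}$ evaluated at $\gl{a_0,1_{a_0}}$, and each of these is $\phi$ applied to an arrow of $\OplSum{p}\prn{E}$ built from a cartesian-lift comparison map; by functoriality of $\phi$ this yields $\prn{\phi^\flat}^\sharp\gl{a_{01},e_{01}} = \phi\gl{a_{01},c_2\circ c_1}$, where $c_1\colon \Lift{E}{1_{a_0}}{e_0}\to\Lift{E}{a_{01}}{e_1}$ and $c_2\colon\Lift{E}{a_{01}}{e_1}\to\Lift{E}{1_{a_1}}{e_1}$ are the respective comparison maps, characterised by $\LiftArr{E}{a_{01}}{e_1}\circ c_1 = e_{01}\circ\LiftArr{E}{1_{a_0}}{e_0}$ and $\LiftArr{E}{1_{a_1}}{e_1}\circ c_2 = \LiftArr{E}{a_{01}}{e_1}$. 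Because arrows of $\OplSum{p}\prn{E}$ are determined componentwise and $\phi$ is a functor, the naturality square reduces to the single equation $e_{01}\circ\LiftArr{E}{1_{a_0}}{e_0} = \LiftArr{E}{1_{a_1}}{e_1}\circ c_2\circ c_1$ in $E$, which follows at once by composing the two defining triangles above. Thus $\theta_\phi$ is a fibred natural transformation, and being pointwise invertible it is an isomorphism in $\FIB{B}\prn{\OplSum{p}\prn{E},F}$. Together with the preceding fullness and faithfulness lemmas, this shows that each $\sharp_{E,F}$ is an equivalence of categories, with pseudo-inverse given by $\flat$.

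I expect the only real obstacle to be the bookkeeping in the naturality step: one must carefully unfold $\prn{\phi^\flat}^\sharp$ through three nested constructions — the transposition-on-arrows formula (which uses the terminal object of $\OplSum{p}\prn{\Yo{A}\prn{a_1}}$) and the object- and arrow-level formulas for $\phi^\flat$ (each of which reindexes along a map into the relevant object of $A$ and introduces a universal comparison map) — and then recognise that all of the resulting comparison maps collapse, under the universal property of the cartesian lifts, so that $\phi^\flat$ undoes $\phi^\sharp$ exactly up to the canonical identity-reindexing isomorphisms. Everything else is formal.
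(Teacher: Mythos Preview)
Your proposal is correct and follows essentially the same approach as the paper: both define the component of the isomorphism $\theta_\phi$ at $\gl{a,e}$ to be $\phi\gl{1_a,\LiftArr{E}{1_a}{e}}$, and both reduce naturality to the identity $\LiftArr{E}{1_{a_1}}{e_1}\circ c_2\circ c_1 = e_{01}\circ\LiftArr{E}{1_{a_0}}{e_0}$ in $E$, obtained by pasting the defining triangles for the two comparison maps. The only cosmetic difference is that the paper carries the argument out at the level of $F$ by first pushing the square and triangle through $\phi$, whereas you more economically verify the single equation in $E$ and then appeal once to functoriality of $\phi$.
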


        \begin{proof}
          Fixing a fibred functor $\phi\colon\OplSum{p}\prn{E}\to F$ over $B$, we will check that $\phi^{\flat\sharp}$ is naturally isomorphic to $\phi$. By definition, we have the following computation of the component of $\phi^{\flat\sharp}$ at $\gl{a,e}\in \OplSum{p}\prn{E}$ over $b\in B$:
          \[
            \phi^{\flat\sharp}_b\gl{a,e} = \phi^\flat_a\prn{e}\gl{a,1_a}
            = \phi_b\gl{a,\Lift{E}{1_a}{e}}
          \]

          The cartesian arrow $\LiftArr{E}{1_a}{e}\colon\Lift{E}{1_a}{e}\to e$ is an isomorphism because $1_a$ is an identity arrow, so we have an isomorphism $\phi_{1_b}\gl{1_a,\LiftArr{E}{1_a}{e}}\colon \phi^{\flat\sharp}_b\gl{a,e}\to \phi_b\gl{a,e}$ by functoriality. For naturality, we fix $e_{01}\colon e_0\to e_1$ over $a_{01}\colon a_0\to a_1$ over $b_{01}\colon b_0\to b_1$ to check that the following commutes in $F$:
          \[
            \DiagramSquare{
              width = 4.5cm,
              nw = \phi^{\flat\sharp}_{b_0}\gl{a_0,e_0},
              ne = \phi^{\flat\sharp}_{b_1}\gl{a_1,e_1},
              sw = \phi_{b_0}\gl{a_0,e_0},
              se = \phi_{b_1}\gl{a_1,e_1},
              north = \phi^{\flat\sharp}_{b_{01}}\gl{a_{01},e_{01}},
              west = \phi_{1_{b_0}}\gl{1_{a_0},\LiftArr{E}{1_{a_0}}{e_0}},
              east = \phi_{1_{b_1}}\gl{1_{a_1},\LiftArr{E}{1_{a_1}}{e_1}},
              south = \phi_{b_{01}}\gl{a_{01},e_{01}},
            }
          \]

          We begin with the following commuting square in $E$:
          \[
            \DiagramSquare{
              nw = \Lift{E}{1_{a_0}}{e_0},
              ne = \Lift{E}{a_{01}}{e_1},
              sw = e_0,
              se = e_1,
              north = \Lift{E}{1_{a_0}}{e_{01}},
              west = \LiftArr{E}{1_{a_0}}{e_0},
              east = \LiftArr{E}{a_{01}}{e_1},
              south = e_{01},
              width = 2.5cm,
            }
          \]

          Under $\phi\colon\OplSum{p}\prn{E}\to F$, we obtain the following:
          \[
            \DiagramSquare{
              nw = \phi_{b_0}\gl{a_0,\Lift{E}{1_{a_0}}{e_0}},
              ne = \phi_{b_0}\gl{a_0,\Lift{E}{a_{01}}{e_1}},
              sw = \phi_{b_0}\gl{a_0,e_0},
              se = \phi_{b_1}\gl{a_1,e_1},
              north = \phi_{1_{b_0}}\gl{1_{a_0},\Lift{E}{1_{a_0}}{e_{01}}},
              west = \phi_{1_{b_0}}\gl{1_{a_0},\LiftArr{E}{1_{a_0}}{e_0}},
              east = \phi_{b_{01}}\gl{a_{01},\LiftArr{E}{a_{01}}{e_1}},
              south = \phi_{b_{01}}\gl{a_{01},e_{01}},
              width = 5cm,
            }
          \]

          By definition of the comparison map $\Lift{E}{!\Sub{\gl{a_0,a_{01}}}}{e_0}$, we have the following triangle that factors the eastern map in the square above:
          \[
            \begin{tikzpicture}[diagram]
              \node (nw) {$\phi_{b_0}\gl{a_0,\Lift{E}{a_{01}}{e_1}}$};
              \node[below right = 4.5cm of nw] (s) {$\phi_{b_1}\gl{a_1,e_1}$};
              \node[above right = 4.5cm of s] (ne) {$\phi_{b_1}\gl{a_1,\Lift{E}{1_{a_1}}{e_1}}$};
              \draw[->] (nw) to node[above] {$\phi_{b_{01}}\gl{a_{01},\Lift{E}{!\Sub{\gl{a_0,a_{01}}}}{e_0}}$} (ne);
              \draw[->] (nw) to node[sloped,below] {$\phi_{b_{01}}\gl{a_{01},\LiftArr{E}{a_{01}}{e_1}}$} (s);
              \draw[->] (ne) to node[sloped,below] {$\phi_{1_{b_1}}\gl{1_{a_1},\LiftArr{E}{1_{a_1}}{e_1}}$} (s);
            \end{tikzpicture}
          \]

          Returning to the original naturality square that we wished to check, we unfold definitions to see that $\phi^{\flat\sharp}_{b_{01}}\gl{a_{01},e_{01}}$ is the composite $\phi_{b_{01}}\gl{a_{01},\Lift{E}{!\Sub{\gl{a_0,a_{01}}}}{e_0}}\circ \phi_{1_{b_0}}\gl{1_{a_0},\Lift{E}{1_{a_0}}{e_{01}}}$. Therefore, we have:
          \begin{align*}
            &\phi_{1_{b_1}}\gl{1_{a_1},\LiftArr{E}{1_{a_1}}{e_1}}\circ
            \phi^{\flat\sharp}_{b_{01}}\gl{a_{01},e_{01}}
            \\
            &\quad=
            \phi_{1_{b_1}}\gl{1_{a_1},\LiftArr{E}{1_{a_1}}{e_1}}\circ
            \phi_{b_{01}}\gl{a_{01},\Lift{E}{!\Sub{\gl{a_0,a_{01}}}}{e_0}}
            \circ\phi_{1_{b_0}}\gl{1_{a_0},\Lift{E}{1_{a_0}}{e_{01}}}
            \\
            &\quad=
            \phi_{b_{01}}\gl{a_{01},\LiftArr{E}{a_{01}}{e_1}}
            \circ \phi_{1_{b_0}}\gl{1_{a_0},\Lift{E}{1_{a_0}}{e_{01}}}
            \\
            &\quad=
            \phi_{b_{01}}\gl{a_{01},e_{01}}\circ \phi_{1_{b_0}}\gl{1_{a_0},\LiftArr{E}{1_{a_0}}{e_0}}
            \qedhere
          \end{align*}
        \end{proof}

        \begin{cor}
          The transpose functor $\sharp_{E,F}$ is a (strong) equivalence of categories.
        \end{cor}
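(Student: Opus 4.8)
The plan is to invoke the standard fact that a functor which is full, faithful, and \emph{split} essentially surjective is a strong equivalence of categories equipped with a canonically specified quasi-inverse; because essential surjectivity is here split, no appeal to the axiom of choice is needed, which is precisely why the resulting equivalence deserves to be called ``strong''. The three preceding lemmas supply exactly these hypotheses for $\sharp_{E,F}$: fullness, faithfulness, and --- from the reverse-transpose construction --- an assignment $\phi\mapsto\phi^\flat$ on objects together with, for each $\phi\in\FIB{B}\prn{\OplSum{p}\prn{E},F}$, a chosen natural isomorphism $\epsilon_\phi\colon\phi^{\flat\sharp}\cong\phi$.

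First I would promote $\flat$ to a functor $\flat_{E,F}\colon\FIB{B}\prn{\OplSum{p}\prn{E},F}\to\FIB{A}\prn{E,\OplDiag{p}\prn{F}}$: given $\psi\colon\phi_0\to\phi_1$ in the source, let $\flat_{E,F}\prn{\psi}\colon\phi_0^\flat\to\phi_1^\flat$ be the morphism --- unique by faithfulness of $\sharp_{E,F}$, and existent by fullness --- whose image under $\sharp_{E,F}$ is the composite $\epsilon_{\phi_1}^{-1}\circ\psi\circ\epsilon_{\phi_0}$. Uniqueness forces $\flat_{E,F}$ to preserve identities and composites, and by construction $\epsilon$ is then a natural isomorphism $\sharp_{E,F}\circ\flat_{E,F}\cong\mathrm{id}$. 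For the other composite, given $\phi\in\FIB{A}\prn{E,\OplDiag{p}\prn{F}}$ the isomorphism $\epsilon_{\phi^\sharp}\colon\phi^{\sharp\flat\sharp}\cong\phi^\sharp$ lies in the image of the fully faithful functor $\sharp_{E,F}$, hence equals $\sharp_{E,F}$ applied to a unique isomorphism $\eta_\phi\colon\phi^{\sharp\flat}\cong\phi$; naturality of $\eta$ follows from naturality of $\epsilon$ together with faithfulness of $\sharp_{E,F}$. Thus $\prn{\sharp_{E,F},\flat_{E,F},\eta,\epsilon}$ is a (strong, in fact adjoint) equivalence of categories.

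I do not anticipate a genuine obstacle: all of the mathematical content has been discharged in the three preceding lemmas, and the remainder is the routine packaging of a fully faithful, split-essentially-surjective functor into a quasi-inverse pair. The only point worth a moment's care is to track which isomorphisms are \emph{chosen} rather than merely asserted to exist, so as to be sure that no choice principle is smuggled in; this is what separates the ``strong'' equivalence obtained here from the weaker statement available for a functor that is merely fully faithful and essentially surjective.
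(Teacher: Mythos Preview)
Your proposal is correct and matches the paper's approach: the paper states the corollary without proof, treating it as an immediate consequence of the three preceding lemmas (fullness, faithfulness, split essential surjectivity), and you have simply spelled out the standard packaging of these into an adjoint equivalence. There is nothing to correct.
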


      \end{xsect}

      \begin{cor}
        For a fibration $p\colon A\rightarrowtriangle B$, the oplax sum pseudofunctor $\OplSum{p}\colon\FIB{A}\to\FIB{B}$ has a right pseudo-adjoint given by $\OplDiag{p}\colon \FIB{B}\to\FIB{A}$.
      \end{cor}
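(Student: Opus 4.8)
The plan is to assemble the corollary from the pieces established in the preceding subsections, using the presentation of a pseudo-adjunction as a pseudo-natural equivalence of hom-categories laid out at the start of this section. I would take the right pseudo-adjoint to be the 2-functor $\OplDiag{p}\colon\FIB{B}\to\FIB{A}$ whose 2-functoriality was established above, and the remaining data of the pseudo-adjunction to be packaged in the family of transposition functors $\sharp_{E,F}\colon\FIB{A}\prn{E,\OplDiag{p}\prn{F}}\to\FIB{B}\prn{\OplSum{p}\prn{E},F}$ together with the invertible cells witnessing their compatibility with pre- and postcomposition.

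First I would recall that each $\sharp_{E,F}$ is a (strong) equivalence of categories — the content of the corollary concluding the ``Transpose is an equivalence'' subsection, which combines fullness, faithfulness, and split essential surjectivity, the last of these exhibited explicitly via the reverse transpose $\flat$. Next, I would invoke the ``Pseudo-naturality of transposition'' subsection: the pseudo-naturality squares for the assignment $\prn{E,F}\mapsto\sharp_{E,F}$, regarded as a transformation on $\FIB{A}\Op\times\FIB{B}$, commute \emph{strictly}; hence the invertible structure cells $\sharp_{E_{10},F_{01}}$ may all be taken to be identities.

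With these two facts in hand, the remaining coherence conditions of the chosen presentation — that $\sharp_{1_E,1_F}$ be the identity 2-cell on $\sharp_{E,F}$, and that $\sharp_{E_{10}\circ E_{21},F_{12}\circ F_{01}}$ coincide with the evident pasting of $\sharp_{E_{10},F_{01}}$ and $\sharp_{E_{21},F_{12}}$ — are discharged automatically, since every cell involved is an identity and identities compose to identities. Thus $\OplDiag{p}$, together with the transposition equivalences $\brc{\sharp_{E,F}}$ and identity pseudo-naturality cells, constitutes a pseudo-adjunction $\OplSum{p}\dashv\OplDiag{p}$; by the existence result of \S\ref{sec:abstract-proof} it is, up to equivalence, \emph{the} right pseudo-adjoint whose abstract existence was already guaranteed.

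I expect no genuine obstacle at this stage: all the substantive work — constructing $\OplDiag{p}$ and its split cleaving, verifying 2-functoriality, building $\sharp$ and $\flat$, and proving the equivalence — has already been carried out in the preceding subsections. The only point deserving a moment's care is the bookkeeping check that strict commutativity of the squares over the \emph{full} product 2-category $\FIB{A}\Op\times\FIB{B}$ really does discharge all the coherence axioms simultaneously, rather than merely the separate naturality-in-$E$ and naturality-in-$F$ fragments; since the verification was performed for the product, this is immediate.
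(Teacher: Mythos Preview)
Your proposal is correct and matches the paper's approach exactly: the corollary in the paper carries no explicit proof, being the immediate summary of the preceding subsections, and your write-up simply makes that assembly explicit—transpose functors are equivalences, the pseudo-naturality squares commute strictly so the structure cells are identities, and the coherence axioms therefore hold trivially. The closing remark tying back to \S\ref{sec:abstract-proof} is a pleasant extra but not required for the corollary itself.
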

    \end{xsect}

  \end{xsect}
\end{xsect}

\begin{xsect}[sec:relative-hs-lifting]{Relative Hofmann--Streicher lifting}
  The (relative, generalised) Hofmann--Streicher lifting of a fibred category $E\in \FIB{B}$ along a fibration $p\colon A\rightarrowtriangle B$ is finally obtained by applying the \emph{lax} base change functor $\LaxDiag{p}\colon\FIB{B}\to\FIB{A}$ to $E$ as described in \S~\ref{sec:intro:lax-base-change}:
  \[
    \LaxDiag{p}\prn{E} =
    \OplDiag{p}\prn{E\Op}\Op
  \]

  As we have explained, the generalisation from ordinary categories $A$ to fibrations $p\colon A\rightarrowtriangle B$ allows us to iterate the Hofmann--Streicher lifting construction along a sequence of fibrations $A_0\rightarrowtriangle \cdots A_n\rightarrowtriangle B$; post-composing finally with $B\rightarrowtriangle\mathbf{1}$ we see that our construction agrees with the usual ``absolute'' Hofmann--Streicher lifting of a category.
\end{xsect}

\begin{xsect}{A new 2-dimensional bifibration of fibrations}
    The relationship between fibred categories and their base categories is well-expressed by the 2-fibration $\FIB{}\to\CAT$ in which cartesian lifts of one-cells are given by pullbacks of fibrations. Hermida~\cite{hermida:2004} explains that for a functor $f\colon A\to B$, each of the pullback 2-functors $f^*\colon \FIB{B}\to \FIB{A}$ has a left pseudo-adjoint $f_!\colon \FIB{A}\to \FIB{B}$; the full computation of the left pseudo-adjoint is not easy, but its fibre categories are described by the following pseudo-colimits:
    \[
      \prn{f_!E}_b =
      \mathrm{pscolim}\prn{
        \DelimMin{1}
        \prn{b/f}\Op
        \xrightarrow{\partial_1\Op}
        A\Op
        \xrightarrow{E_\bullet}
        \CAT
      }
    \]

    In summary, these left adjoints endow the 2-fibration $\FIB{}\rightarrowtriangle\CAT$ with \emph{direct images} in the sense of Hermida~\cite{hermida:2004}, making $\FIB{}\rightarrowtriangle\CAT$ into some kind of 2-dimensional bifibration---although there seems to be some uncertainty in the literature as to the proper definition of bifibrations of 2-categories.\footnote{Even the definition of ``2-fibration'' is contested in the literature: see Buckley~\cite[Remark~2.1.9]{buckley:2014} for discussion. Buckley's definition is strictly stronger than that of Hermida~\cite{hermida:1993:fib}, and this strength is necessary in order to obtain the straightening/unstraightening theory of 2-fibrations as Buckley points out, apparently correcting a claim of Bakovi\'c~\cite{bakovic:2011}. Therefore, any work that explicitly depends on the elegant results of Hermida~\cite{hermida:1993:fib,hermida:2004} on 2-fibrations would need to check explicitly whether these results are in fact compatible with Buckley's corrected definition of 2-fibrations.}

    We have shown that for each fibration $p\colon A\rightarrowtriangle B$ we have a pseudo-adjunction $\OplSum{p}\dashv\OplDiag{p}\colon\FIB{A}\to\FIB{B}$, and it is natural to ask whether these play a part in a \emph{different} 2-dimensional bifibration. Naturally, the base 2-category cannot be $\CAT$, but must instead be a 2-category whose 1-cells are fibrations.

    \begin{defi}
      We define $\CATFib$ to be the 2-category of categories $C$, fibrations $p\colon A\rightarrowtriangle B$, and natural transformations $\alpha\colon p'\Rightarrow p$. In other words, this is the (non-full) sub-2-category of $\CAT\Co$ with 1-cells restricted to fibrations.
    \end{defi}

    \begin{defi}
      Now define $\FIBFib$ to be the 2-category specified as follows:
      \begin{itemize}[label=$\triangleright$]
        \item A 0-cell of $\FIBFib$ is a fibration $p\colon A\rightarrowtriangle B$.
        \item A 1-cell of $\FIBFib$ from $p\colon A\rightarrowtriangle B$ to $q\colon C\rightarrowtriangle D$ is given by a square
        \[
          \begin{tikzpicture}[scale=0.5,baseline=($(nw)!0.5!(se)$)]
            \CreateRect{6}{4}
              \path
                coordinate[label=above:$\strut p$] (nw) at (spath cs:north 0.25)
                coordinate[label=above:$\strut g$] (ne) at (spath cs:north 0.75)
                coordinate[label=below:$\strut f$] (sw) at (spath cs:south 0.25)
                coordinate[label=below:$\strut q$] (se) at (spath cs:south 0.75)
              ;
              \draw[spath/save = pq] (nw) to[out=-90,in=90] (se);
              \draw[spath/save = gf, exists] (ne) to[out=-90,in=90] (sw);
              \path[name intersections={of=pq and gf}]
                coordinate[dot,label=90:$\alpha$] (cell) at (intersection-1)
              ;
          \end{tikzpicture}
        \]
        in which $g\colon B\rightarrowtriangle D$ is a fibration.
        \item Given a pair of 1-cells as above,
        \[
          \begin{tikzpicture}[scale=0.5,baseline=($(nw)!0.5!(se)$)]
            \CreateRect{6}{4}
              \path
                coordinate[label=above:$\strut p$] (nw) at (spath cs:north 0.25)
                coordinate[label=above:$\strut g$] (ne) at (spath cs:north 0.75)
                coordinate[label=below:$\strut f$] (sw) at (spath cs:south 0.25)
                coordinate[label=below:$\strut q$] (se) at (spath cs:south 0.75)
              ;
              \draw[spath/save = pq] (nw) to[out=-90,in=90] (se);
              \draw[spath/save = gf, exists] (ne) to[out=-90,in=90] (sw);
              \path[name intersections={of=pq and gf}]
                coordinate[dot,label=90:$\alpha$] (cell) at (intersection-1)
              ;
          \end{tikzpicture}
          \qquad
          \begin{tikzpicture}[scale=0.5,baseline=($(nw)!0.5!(se)$)]
            \CreateRect{6}{4}
              \path
                coordinate[label=above:$\strut p$] (nw) at (spath cs:north 0.25)
                coordinate[label=above:$\strut k$] (ne) at (spath cs:north 0.75)
                coordinate[label=below:$\strut h$] (sw) at (spath cs:south 0.25)
                coordinate[label=below:$\strut q$] (se) at (spath cs:south 0.75)
              ;
              \draw[spath/save = pq] (nw) to[out=-90,in=90] (se);
              \draw[spath/save = kh, exists] (ne) to[out=-90,in=90] (sw);
              \path[name intersections={of=pq and kh}]
                coordinate[dot,label=90:$\beta$] (cell) at (intersection-1)
              ;
          \end{tikzpicture}
        \]
        a 2-cell from $\alpha$ to $\beta$ is a pair of natural transformations $\phi\colon f \Rightarrow h$ and $\psi\colon k\Rightarrow g$ making the following equation hold:
        \[
         \beta =
          \begin{tikzpicture}[scale=0.5,baseline=($(nw)!0.5!(se)$)]
            \CreateRect{6}{4}
              \path
                coordinate[label=above:$\strut p$] (nw) at (spath cs:north 0.25)
                coordinate[label=above:$\strut k$] (ne) at (spath cs:north 0.75)
                coordinate[label=below:$\strut h$] (sw) at (spath cs:south 0.25)
                coordinate[label=below:$\strut q$] (se) at (spath cs:south 0.75)
              ;
              \draw[spath/save = pq] (nw) to[out=-90,in=90] (se);
              \draw[spath/save = kh] (ne) to[out=-90,in=90] (sw);
              \path
                coordinate[dot,label=left:$\phi$] (phi) at (spath cs:kh 0.75)
                coordinate[dot,label=right:$\psi$] (psi) at (spath cs:kh 0.25)
              ;
              \path[name intersections={of=pq and kh}]
                coordinate[dot,label=90:$\alpha$] (cell) at (intersection-1)
              ;
          \end{tikzpicture}
        \]

        \item The definitions of identities and compositions of 1- and 2-cells and their strict functoriality, associativity and unit laws are evident.
      \end{itemize}
    \end{defi}

    \begin{con}
      We may now consider the codomain 2-functor $\partial_1 \colon \FIBFib\to \CATFib$.
      \begin{itemize}[label=$\triangleright$]
        \item Given a fibration $p\colon A\rightarrowtriangle B$, we define $\partial_1 p :\equiv B$.
        \item Given fibrations $p\colon A\rightarrowtriangle B$ and $q\colon C\rightarrowtriangle D$, we send a square
          \[
            \begin{tikzpicture}[scale=0.5,baseline=($(nw)!0.5!(se)$)]
              \CreateRect{6}{4}
                \path
                  coordinate[label=above:$\strut p$] (nw) at (spath cs:north 0.25)
                  coordinate[label=above:$\strut g$] (ne) at (spath cs:north 0.75)
                  coordinate[label=below:$\strut f$] (sw) at (spath cs:south 0.25)
                  coordinate[label=below:$\strut q$] (se) at (spath cs:south 0.75)
                ;
                \draw[spath/save = pq] (nw) to[out=-90,in=90] (se);
                \draw[spath/save = gf, exists] (ne) to[out=-90,in=90] (sw);
                \path[name intersections={of=pq and gf}]
                  coordinate[dot,label=90:$\alpha$] (cell) at (intersection-1)
                ;
            \end{tikzpicture}
          \]
          to the fibration $\partial_q\alpha :\equiv g\colon B\rightarrowtriangle D$.
        \item Given a pair of squares
        \[
          \begin{tikzpicture}[scale=0.5,baseline=($(nw)!0.5!(se)$)]
            \CreateRect{6}{4}
              \path
                coordinate[label=above:$\strut p$] (nw) at (spath cs:north 0.25)
                coordinate[label=above:$\strut g$] (ne) at (spath cs:north 0.75)
                coordinate[label=below:$\strut f$] (sw) at (spath cs:south 0.25)
                coordinate[label=below:$\strut q$] (se) at (spath cs:south 0.75)
              ;
              \draw[spath/save = pq] (nw) to[out=-90,in=90] (se);
              \draw[spath/save = gf, exists] (ne) to[out=-90,in=90] (sw);
              \path[name intersections={of=pq and gf}]
                coordinate[dot,label=90:$\alpha$] (cell) at (intersection-1)
              ;
          \end{tikzpicture}
          \qquad
          \begin{tikzpicture}[scale=0.5,baseline=($(nw)!0.5!(se)$)]
            \CreateRect{6}{4}
              \path
                coordinate[label=above:$\strut p$] (nw) at (spath cs:north 0.25)
                coordinate[label=above:$\strut k$] (ne) at (spath cs:north 0.75)
                coordinate[label=below:$\strut h$] (sw) at (spath cs:south 0.25)
                coordinate[label=below:$\strut q$] (se) at (spath cs:south 0.75)
              ;
              \draw[spath/save = pq] (nw) to[out=-90,in=90] (se);
              \draw[spath/save = kh, exists] (ne) to[out=-90,in=90] (sw);
              \path[name intersections={of=pq and kh}]
                coordinate[dot,label=90:$\beta$] (cell) at (intersection-1)
              ;
          \end{tikzpicture}
        \]  with $g,k\colon B\rightarrowtriangle D$ fibrations
        and 2-cell $\gl{\phi,\psi} \colon \alpha\Rightarrow \beta$ as shown below
        \[
         \beta =
          \begin{tikzpicture}[scale=0.5,baseline=($(nw)!0.5!(se)$)]
            \CreateRect{6}{4}
              \path
                coordinate[label=above:$\strut p$] (nw) at (spath cs:north 0.25)
                coordinate[label=above:$\strut k$] (ne) at (spath cs:north 0.75)
                coordinate[label=below:$\strut h$] (sw) at (spath cs:south 0.25)
                coordinate[label=below:$\strut q$] (se) at (spath cs:south 0.75)
              ;
              \draw[spath/save = pq] (nw) to[out=-90,in=90] (se);
              \draw[spath/save = kh] (ne) to[out=-90,in=90] (sw);
              \path
                coordinate[dot,label=left:$\phi$] (phi) at (spath cs:kh 0.75)
                coordinate[dot,label=right:$\psi$] (psi) at (spath cs:kh 0.25)
              ;
              \path[name intersections={of=pq and kh}]
                coordinate[dot,label=90:$\alpha$] (cell) at (intersection-1)
              ;
          \end{tikzpicture}
        \]
        we define $\partial_1\gl{\phi,\psi}\colon \partial_1\alpha \Rightarrow \partial_1\beta$ to be the natural transformation $\psi\colon k\Rightarrow g$, which is, taking into account the local duality involution, a 2-cell $g\to k$ in $\CATFib$.
        \item These assignments are seen to be strictly 2-functorial.
      \end{itemize}
    \end{con}

    \begin{lem}
      The codomain 2-functor $\partial_1\colon \FIBFib\to \CATFib$ is closed under 1-dimensional opcartesian lifts in the sense of Buckley~\cite{buckley:2014}, which are supplied fibrewise by the postcomposition 2-functors $\OplSum{f}\colon \FIB{B}\to \FIB{C}$ for fibrations $f\colon B\rightarrowtriangle C$.
    \end{lem}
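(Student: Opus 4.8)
The plan is to construct the opcartesian lifts explicitly and then verify Buckley's universal property by directly comparing the relevant hom-categories. Fix a $0$-cell $p\colon A\rightarrowtriangle B$ of $\FIBFib$ together with a $1$-cell $f\colon B\rightarrowtriangle C$ of $\CATFib$ out of $\partial_1 p = B$; recall that by definition a $1$-cell of $\CATFib$ is a fibration. Regarding $p$ as a fibred category over $B$, the oplax sum $\OplSum{f}\prn{p}$ is computed, as in \S~\ref{sec:intro:oplax-sum-and-base-change} following Streicher, by postcomposition: it has total category $A$ and display $f\circ p\colon A\rightarrowtriangle C$. Crucially $f\circ p$ really is a fibration, because $f$ and $p$ both are, so $\OplSum{f}\prn{p}$ is again a $0$-cell of $\FIBFib$ --- and this is precisely the point of passing to the base $2$-category $\CATFib$, whose $1$-cells are fibrations rather than arbitrary functors. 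The candidate opcartesian lift of $p$ along $f$ is then the $1$-cell $\bar f\colon p\to\OplSum{f}\prn{p}$ whose component on total categories is $\mathrm{id}_A$, whose base component is $f$ itself, and whose structure $2$-cell is the identity; evidently $\partial_1\bar f = f$.

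I would then check that $\bar f$ is opcartesian in Buckley's sense: for every $0$-cell $q\colon C'\rightarrowtriangle D$ of $\FIBFib$ the comparison functor induced by precomposition with $\bar f$ and by $\partial_1$,
\[
  \FIBFib\prn{\OplSum{f}\prn{p},q}\longrightarrow\FIBFib\prn{p,q}\times_{\CATFib\prn{B,D}}\CATFib\prn{C,D},
\]
is an isomorphism of categories. This follows by unwinding both sides. A $1$-cell $\OplSum{f}\prn{p}\to q$ amounts to a functor $k\colon A\to C'$, a fibration $g'\colon C\rightarrowtriangle D$, and a structure $2$-cell relating $q\circ k$ with $g'\circ\prn{f\circ p}$; precomposition with $\bar f$ leaves $k$ and the $2$-cell untouched and replaces $g'$ by $g'\circ f$. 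An object of the pullback, on the other hand, is a $1$-cell $p\to q$ --- a functor $k\colon A\to C'$, a fibration $h\colon B\rightarrowtriangle D$, and a $2$-cell relating $q\circ k$ with $h\circ p$ --- together with a fibration $g'\colon C\rightarrowtriangle D$ subject to $h=g'\circ f$. The two packages carry the same information: $h$ is forced to equal $g'\circ f$, and since $f$ is a fibration this composite is automatically a fibration, so that the fibredness required of $g'$ is precisely what the $\FIBFib\prn{p,q}$-component requires of $h$. The identical matching holds for $2$-cells, once one keeps track of the $2$-cell duality on base components (exactly as in the construction of $\partial_1$ on $2$-cells). Hence the comparison is an isomorphism and $\bar f$ is opcartesian.

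Finally I would record that the reindexing of fibres determined by these lifts is $\OplSum{f}$ acting fibrewise: a displayed functor $k$ over $B$ is carried, by the opcartesian property, to the same underlying functor $k$ now regarded as a displayed functor over $C$, which is exactly the action of $\OplSum{f}\colon\FIB{B}\to\FIB{C}$. Closure of the opcartesian $1$-cells under composition is then immediate from the pseudofunctoriality isomorphisms $\OplSum{g}\circ\OplSum{f}\cong\OplSum{g\circ f}$, which identify the composite of $\bar g$ with $\bar f$ (up to coherence) with the lift along the composite fibration.

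I expect the main obstacle to be bookkeeping rather than anything conceptual: one must transport the structure $2$-cells and, above all, the $2$-cell duality on base components faithfully through the comparison functor, and one must confirm that the fibredness condition imposed on $g'$ in $\CATFib\prn{C,D}$ is neither stronger nor weaker than the one forced on the $\FIBFib$-side. That last point is exactly where the choice of $\CATFib$ --- with fibrations for $1$-cells --- is doing the work: for an arbitrary functor $f$ the composite $f\circ p$ need not be a fibration, so the putative lift would fall outside $\FIBFib$ altogether.
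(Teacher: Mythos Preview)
Your proposal is correct and follows essentially the same approach as the paper: you construct the lift $\bar f\colon p\to\OplSum{f}\prn{p}$ via the strictly commuting square with $\mathrm{id}_A$ on total categories, and then verify Buckley's opcartesian condition by showing that the comparison functor into the strict pullback $\FIBFib\prn{p,q}\times_{\CATFib\prn{B,D}}\CATFib\prn{C,D}$ is an isomorphism, unwinding both sides exactly as the paper does. Your additional remarks on fibrewise reindexing and closure under composition go slightly beyond what the paper records in this lemma, but they are consistent with it.
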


    \begin{proof}
      Let $p\colon A\rightarrowtriangle B$ and $f\colon B\rightarrowtriangle C$ be fibrations, as depicted in the following internal diagram of $\partial_1\colon \FIBFib\to \CATFib$:
      \[
        \begin{tikzpicture}[diagram]
          \node (p) {$p$};
          \node[below = 1.5cm of p] (B) {$B$};
          \node[right = of B] (C) {$C$};
          \draw[|->] (p) to (B);
          \draw[fibration] (B) to node[below] {$f$} (C);
          \node[right=of C] (CATFib) {$\CATFib$};
          \node[above=1.5cm of CATFib] (FIBFib) {$\FIBFib$};
          \draw[->] (FIBFib) to node[right] {$\partial_1$} (CATFib);
        \end{tikzpicture}
      \]

      We have the following strictly commuting square in $\CAT$
      \[
        \DiagramSquare{
          nw = A,
          sw = B,
          ne = A,
          se = C,
          north/style = double,
          west = p,
          east = f\circ p,
          south = f,
          west/style = fibration,
          south/style = fibration,
          east/style = fibration,
        }
      \]
      which we shall draw as the following internal diagram of $\partial_1\colon\FIBFib\to\CATFib$
      \[
        \DiagramSquare{
          nw = p,
          sw = B,
          north = \bar{f},
          ne = \OplSum{f}\prn{p},
          se = C,
          height = 1.5cm,
          west/style = {|->},
          east/style = {|->},
          south/style = fibration,
          south = f,
        }
      \]
      which we claim to be opcartesian in the sense of Buckley~\cite{buckley:2014}, which is to say that the following diagram is a (strict) pullback in $\CAT$ for any fibration $q\colon D\rightarrowtriangle E$:
      \[
        \DiagramSquare{
          width = 4cm,
          nw/style = dotted pullback,
          nw = \FIBFib\prn{\OplSum{f}\prn{p},q},
          ne = \FIBFib\prn{p,q},
          sw = \CATFib\prn{C,E},
          se = \CATFib\prn{B,E}
        }
      \]

      We shall check that $\FIBFib\prn{\OplSum{f}\prn{p},q}$ is \emph{isomorphic} to the indicated strict fibre product of categories. First of all, a 1-cell $\OplSum{f}(p)\to q$ in $\FIBFib$ is given by a cell of the following kind in which $k\colon C\rightarrowtriangle E$ is a fibration and $h\colon A\to D$ is a functor:
      \[
        \begin{tikzpicture}[scale=0.5,baseline=($(nw)!0.5!(se)$)]
          \CreateRect{6}{4}
            \path
              coordinate[label=above:$\strut p$] (nw) at (spath cs:north 0.25)
              coordinate[label=above:$\strut k$] (ne) at (spath cs:north 0.75)
              coordinate[label=above:$\strut f$] (n) at (spath cs:north 0.5)
              coordinate[label=below:$\strut h$] (sw) at (spath cs:south 0.25)
              coordinate[label=below:$\strut q$] (se) at (spath cs:south 0.75)
            ;
            \draw[spath/save = dr] (nw) to[out=-90,in=90] (se);
            \draw[spath/save = dl, exists] (ne) to[out=-90,in=90] (sw);
            \path[name intersections={of=dr and dl}]
              coordinate[dot,label=below:$\alpha$] (cell) at (intersection-1)
            ;
            \draw (n) to (cell.center);
        \end{tikzpicture}
      \]
      Conversely, we may take a fibration $k\colon C\rightarrowtriangle E$ and \emph{precompose} with $f\colon B\rightarrowtriangle C$ to obtain $f;k\colon B\rightarrowtriangle E$, and if we consider the class of 1-cells $p\to q$ in $\FIBFib$ lying over it, we obtain precisely the collection of 1-cells $\OplSum{f}\prn{p}\to q$ in $\FIBFib$ lying over $k\colon C\rightarrowtriangle E$.
      It is a similar story to establish the isomorphism on 2-cells.
    \end{proof}

    \begin{lem}
      The codomain 2-functor $\partial_1\colon \FIBFib\to \CATFib$ is \emph{locally} an opfibration, in the sense that for any fibrations $p\colon A\rightarrowtriangle B$ and $q\colon C\rightarrowtriangle D$, the induced functor $\FIBFib\prn{p,q}\to \CATFib\prn{B,C}$ is a opfibration.
    \end{lem}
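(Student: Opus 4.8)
The plan is to construct opcartesian lifts explicitly and check their universal property by hand; this is feasible because a $2$-cell of $\FIBFib$ is a pair of natural transformations whose two components compose independently. Recall that an object of $\FIBFib\prn{p,q}$ is a triple $\gl{f,g,\alpha}$ consisting of a functor $f\colon A\to C$, a fibration $g\colon B\rightarrowtriangle D$, and the $2$-cell $\alpha$ filling the defining square, while a morphism $\gl{\phi,\psi}\colon\gl{f,g,\alpha}\to\gl{h,k,\beta}$ is a pair of natural transformations $\phi\colon f\Rightarrow h$ and $\psi\colon k\Rightarrow g$ satisfying the single pasting equation that expresses $\beta$ as the composite of $\alpha$ with the whiskering of $\psi$ along $p$ and the whiskering of $\phi$ along $q$. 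The functor under scrutiny, $\partial_1\colon\FIBFib\prn{p,q}\to\CATFib\prn{\partial_1 p,\partial_1 q}=\CATFib\prn{B,D}$, sends $\gl{f,g,\alpha}$ to $g$ and $\gl{\phi,\psi}$ to $\psi$, regarded---via the local duality involution of $\CATFib$---as a morphism $g\to k$.

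Given an object $\gl{f,g,\alpha}$ and a morphism $g\to k$ of $\CATFib\prn{B,D}$, that is, a fibration $k\colon B\rightarrowtriangle D$ together with a natural transformation $\psi\colon k\Rightarrow g$, I would form the $1$-cell $\gl{f,k,\alpha'}$ of $\FIBFib$ whose square $\alpha'$ is obtained by pasting the whiskering of $\psi$ along $p$ onto $\alpha$; this really is a $1$-cell of $\FIBFib$ precisely because $k$ was required to be a fibration. The candidate opcartesian lift is then $\gl{1_f,\psi}\colon\gl{f,g,\alpha}\to\gl{f,k,\alpha'}$, which by construction satisfies the pasting equation and lies over the chosen morphism of $\CATFib\prn{B,D}$.

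To see this lift is opcartesian, suppose given a morphism $\gl{\phi'',\psi''}\colon\gl{f,g,\alpha}\to\gl{h',k',\gamma}$ together with a morphism $k\to k'$ of $\CATFib\prn{B,D}$, say a natural transformation $\psi'\colon k'\Rightarrow k$, such that $\gl{\phi'',\psi''}$ lies over the composite; unwinding the duality involution, this says precisely that $\psi''$ is the vertical composite of $\psi$ after $\psi'$. The unique factorisation is $\gl{\phi'',\psi'}\colon\gl{f,k,\alpha'}\to\gl{h',k',\gamma}$: its pasting equation follows from that of $\gl{\phi'',\psi''}$ because whiskering along $p$ is functorial and therefore carries the vertical composite $\psi''$ to the composite of the whiskerings of $\psi$ and of $\psi'$; composing $\gl{\phi'',\psi'}$ with $\gl{1_f,\psi}$ recovers $\gl{\phi'',\psi''}$ since the $\phi$-components compose as $\phi''\circ 1_f=\phi''$ and the $\psi$-components compose to $\psi''$ by hypothesis; and uniqueness holds because any competing factorisation necessarily has the prescribed $\psi$-component $\psi'$ and is thereby forced to have $\phi$-component $\phi''$. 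Since the lifts are chosen canonically, $\partial_1\colon\FIBFib\prn{p,q}\to\CATFib\prn{B,D}$ is in fact a \emph{split} opfibration.

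The only real work here is bookkeeping around the $\Co$/duality involution built into $\CATFib$: one must check that the ``lies over'' condition translates to ``$\psi''$ equals $\psi$ after $\psi'$'' in this order rather than the reverse, and then that the two pasting equations genuinely coincide once whiskering is distributed over a vertical composite. Beyond this there is no categorical obstacle---opcartesianness falls out of the observation that $2$-cell composition in $\FIBFib$ acts separately on the two natural-transformation components, with the pasting constraint propagated automatically.
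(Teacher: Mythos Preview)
Your proposal is correct and follows essentially the same approach as the paper: both construct the opcartesian lift of $\gl{f,g,\alpha}$ along a base morphism (your $\psi\colon k\Rightarrow g$, the paper's $\gamma\colon g'\Rightarrow g$) by pasting the natural transformation onto $\alpha$ while keeping $f$ fixed, and then verify the universal property by exploiting that the two components of a $2$-cell compose independently. Your additional remark that the resulting opfibration is split, and your silent correction of the paper's typo $\CATFib\prn{B,C}$ to $\CATFib\prn{B,D}$, are both welcome.
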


    \begin{proof}
      We consider an opcartesian lifting problem in $\FIBFib\prn{p,q}\to \CATFib\prn{B,C}$. This amounts to a 1-cell $\gl{f,g,\alpha}\colon p\to q$ in $\FIBFib$ with $g\colon B\rightarrowtriangle D$ a fibration and $f\colon A\to C$ a functor as depicted below
      \[
        \begin{tikzpicture}[scale=0.5,baseline=($(nw)!0.5!(se)$)]
          \CreateRect{6}{4}
            \path
              coordinate[label=above:$\strut p$] (nw) at (spath cs:north 0.25)
              coordinate[label=above:$\strut g$] (ne) at (spath cs:north 0.75)
              coordinate[label=below:$\strut f$] (sw) at (spath cs:south 0.25)
              coordinate[label=below:$\strut q$] (se) at (spath cs:south 0.75)
            ;
            \draw[spath/save = pq] (nw) to[out=-90,in=90] (se);
            \draw[spath/save = gf, exists] (ne) to[out=-90,in=90] (sw);
            \path[name intersections={of=pq and gf}]
              coordinate[dot,label=90:$\alpha$] (cell) at (intersection-1)
            ;
        \end{tikzpicture}
      \]
      together with a fibration $g'\colon B\rightarrowtriangle D$ and a natural transformation $\gamma\colon g'\Rightarrow g$. Together, all this amounts to the following internal diagram in $\FIBFib\prn{p,q}\to \CATFib\prn{B,C}$, taking note of the local duality involution:
      \[
        \begin{tikzpicture}[diagram]
          \node (alpha) {$\alpha$};
          \node[below = 1.5cm of alpha] (g) {$g$};
          \node[right = of g] (g') {$g'$};
          \draw[|->] (alpha) to (g);
          \draw[->] (g) to node[below] {$\gamma$} (g');
          \node[right=of g'] (CATFib) {$\CATFib\prn{B,C}$};
          \node[above=1.5cm of CATFib] (FIBFib) {$\FIBFib\prn{p,q}$};
          \draw[->] (FIBFib) to (CATFib);
        \end{tikzpicture}
      \]

      We may obtain the following morphism from $\gamma_!\alpha \colon p \to q$ over $g'$:
      \[
        \gamma_!\alpha :\equiv
        \begin{tikzpicture}[scale=0.5,baseline=($(nw)!0.5!(se)$)]
          \CreateRect{6}{4}
            \path
              coordinate[label=above:$\strut p$] (nw) at (spath cs:north 0.25)
              coordinate[label=above:$\strut g'$] (ne) at (spath cs:north 0.75)
              coordinate[label=below:$\strut f$] (sw) at (spath cs:south 0.25)
              coordinate[label=below:$\strut q$] (se) at (spath cs:south 0.75)
            ;
            \draw[spath/save = pq] (nw) to[out=-90,in=90] (se);
            \draw[spath/save = gf] (ne) to[out=-90,in=90] (sw);
            \path coordinate[dot,label=right:$\gamma$] (gamma) at (spath cs:gf 0.25);
            \path[name intersections={of=pq and gf}]
              coordinate[dot,label=below:$\alpha$] (cell) at (intersection-1)
            ;
        \end{tikzpicture}
      \]

      There is an evident 2-cell $\gl{1_f,\gamma}\colon \alpha\to \gamma_!\alpha$ over $\gamma$ as witnessed by the unit law:
      \[
        \gamma_!\alpha \equiv
        \begin{tikzpicture}[scale=0.5,baseline=($(nw)!0.5!(se)$)]
          \CreateRect{6}{4}
            \path
              coordinate[label=above:$\strut p$] (nw) at (spath cs:north 0.25)
              coordinate[label=above:$\strut g'$] (ne) at (spath cs:north 0.75)
              coordinate[label=below:$\strut f$] (sw) at (spath cs:south 0.25)
              coordinate[label=below:$\strut q$] (se) at (spath cs:south 0.75)
            ;
            \draw[spath/save = pq] (nw) to[out=-90,in=90] (se);
            \draw[spath/save = gf] (ne) to[out=-90,in=90] (sw);
            \path coordinate[dot,label=right:$\gamma$] (gamma) at (spath cs:gf 0.25);
            \path[name intersections={of=pq and gf}]
              coordinate[dot,label=below:$\alpha$] (cell) at (intersection-1)
            ;
        \end{tikzpicture}
        =
        \begin{tikzpicture}[scale=0.5,baseline=($(nw)!0.5!(se)$)]
          \CreateRect{6}{4}
            \path
              coordinate[label=above:$\strut p$] (nw) at (spath cs:north 0.25)
              coordinate[label=above:$\strut g'$] (ne) at (spath cs:north 0.75)
              coordinate[label=below:$\strut f$] (sw) at (spath cs:south 0.25)
              coordinate[label=below:$\strut q$] (se) at (spath cs:south 0.75)
            ;
            \draw[spath/save = pq] (nw) to[out=-90,in=90] (se);
            \draw[spath/save = kh] (ne) to[out=-90,in=90] (sw);
            \path
              coordinate[dot,label=left:$1_f$] (phi) at (spath cs:kh 0.75)
              coordinate[dot,label=right:$\gamma$] (psi) at (spath cs:kh 0.25)
            ;
            \path[name intersections={of=pq and kh}]
              coordinate[dot,label=90:$\alpha$] (cell) at (intersection-1)
            ;
        \end{tikzpicture}
      \]

      Hence we have the following internal diagram in $\FIBFib\prn{p,q}\to \CATFib\prn{B,C}$:
      \[
        \begin{tikzpicture}[diagram]
          \node (alpha) {$\alpha$};
          \node[right = of alpha] (alpha') {$\gamma_!\alpha$};
          \node[below = 1.5cm of alpha] (g) {$g$};
          \node[right = of g] (g') {$g'$};
          \draw[|->] (alpha) to (g);
          \draw[|->] (alpha') to (g');
          \draw[->] (alpha) to node[above] {$\gl{1_f,\gamma}$} (alpha');
          \draw[->] (g) to node[below] {$\gamma$} (g');
          \node[right=of g'] (CATFib) {$\CATFib\prn{B,C}$};
          \node[above=1.5cm of CATFib] (FIBFib) {$\FIBFib\prn{p,q}$};
          \draw[->] (FIBFib) to (CATFib);
        \end{tikzpicture}
      \]

      We claim that the diagram above is opcartesian. Fixing a 1-cell $\beta \colon p\to q$ in $\FIBFib$
      \[
        \begin{tikzpicture}[scale=0.5,baseline=($(nw)!0.5!(se)$)]
          \CreateRect{6}{4}
            \path
              coordinate[label=above:$\strut p$] (nw) at (spath cs:north 0.25)
              coordinate[label=above:$\strut k$] (ne) at (spath cs:north 0.75)
              coordinate[label=below:$\strut h$] (sw) at (spath cs:south 0.25)
              coordinate[label=below:$\strut q$] (se) at (spath cs:south 0.75)
            ;
            \draw[spath/save = pq] (nw) to[out=-90,in=90] (se);
            \draw[spath/save = kh, exists] (ne) to[out=-90,in=90] (sw);
            \path[name intersections={of=pq and kh}]
              coordinate[dot,label=90:$\beta$] (cell) at (intersection-1)
            ;
        \end{tikzpicture}
      \]
      we must check that the following evident square is cartesian:
      \[
        \DiagramSquare{
          width = 5cm,
          nw = \FIBFib\prn{p,q}\prn{\gamma_!\alpha,\beta},
          ne = \FIBFib\prn{p,q}\prn{\alpha,\beta},
          sw = \CATFib\prn{p,q}\prn{g',k},
          se = \CATFib\prn{p,q}\prn{g,k},
          nw/style = dotted pullback,
        }
      \]

      The actual fibre product consists of transformations $\phi\colon h\Rightarrow f$ and $\psi\colon k\Rightarrow g$ and $\chi\colon k\Rightarrow g'$ satisfying the equation
      \[
         \beta =
          \begin{tikzpicture}[scale=0.5,baseline=($(nw)!0.5!(se)$)]
            \CreateRect{6}{4}
              \path
                coordinate[label=above:$\strut p$] (nw) at (spath cs:north 0.25)
                coordinate[label=above:$\strut k$] (ne) at (spath cs:north 0.75)
                coordinate[label=below:$\strut h$] (sw) at (spath cs:south 0.25)
                coordinate[label=below:$\strut q$] (se) at (spath cs:south 0.75)
              ;
              \draw[spath/save = pq] (nw) to[out=-90,in=90] (se);
              \draw[spath/save = kh] (ne) to[out=-90,in=90] (sw);
              \path
                coordinate[dot,label=left:$\phi$] (phi) at (spath cs:kh 0.75)
                coordinate[dot,label=right:$\psi$] (psi) at (spath cs:kh 0.25)
              ;
              \path[name intersections={of=pq and kh}]
                coordinate[dot,label=90:$\alpha$] (cell) at (intersection-1)
              ;
          \end{tikzpicture}
        \]
        such that $\psi = k \xRightarrow{\chi}g' \xRightarrow{\gamma} g$. But this is precisely the data of a 2-cell $\gamma_!\alpha\to \beta$ in $\FIBFib$.
    \end{proof}

    \begin{lem}
      Local opcartesian 2-cells in $\partial_1\colon\FIBFib\to\CATFib$ are closed under horizontal composition.
    \end{lem}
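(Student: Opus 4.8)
The plan is to reduce the claim to an explicit description of the local opcartesian $2$-cells and then to a triviality about Godement products of natural isomorphisms. First I would extract from the proof of the preceding lemma the following characterisation: there, the chosen opcartesian lift of a $2$-cell $\gamma\colon g'\Rightarrow g$ at a $1$-cell $\alpha=\gl{f,g,\alpha}$ was exhibited as $\gl{1_f,\gamma}\colon\alpha\Rightarrow\gamma_!\alpha$, where $\gamma_!\alpha$ keeps the $f$-component and replaces the fibration $g$ by $g'$. Since in any opfibration a morphism is opcartesian precisely when it factors as a chosen opcartesian lift followed by a vertical isomorphism, and a vertical $2$-cell of the local opfibrations established above is one of the form $\gl{\theta,1_g}$ with $\theta$ invertible, this yields the characterisation: a $2$-cell $\gl{\phi,\psi}$ of $\FIBFib$ is local opcartesian if and only if the natural transformation $\phi$ relating the $f$-components of its source and target is a natural isomorphism.

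Next I would spell out the (strict, ``evident'') horizontal composition of $\FIBFib$. Given horizontally composable $2$-cells $\Phi=\gl{\phi_1,\psi_1}$ and $\Xi=\gl{\phi_2,\psi_2}$, pasting the defining squares side by side shows that the $f$-component of $\Xi\ast\Phi$ is the Godement product $\phi_2\ast\phi_1$ of the $f$-components of $\Phi$ and $\Xi$ (and, dually, that its base component is the corresponding Godement product of $\psi_1$ and $\psi_2$); here one also uses that a composite of fibrations is a fibration, so that the horizontal composite of the underlying $1$-cells is a legitimate $1$-cell of $\FIBFib$. Now if $\Phi$ and $\Xi$ are local opcartesian, then $\phi_1$ and $\phi_2$ are natural isomorphisms by the characterisation, hence so is $\phi_2\ast\phi_1$; applying the characterisation in the reverse direction, $\Xi\ast\Phi$ is local opcartesian, which is the claim.

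The one point deserving care is the bookkeeping of the two nested duality conventions: the $\CAT\Co$-duality baked into $\CATFib$ (which reverses the fibration components $\psi$) together with the opposed variances of $\phi$ and $\psi$ within a $2$-cell of $\FIBFib$. One must check that ``local opcartesian'' isolates invertibility of precisely the $\phi$-component, and that horizontal composition acts on that component as the ordinary Godement product. Once these orientations are pinned down, the remainder is a routine verification that the strict $2$-category structure of $\FIBFib$ makes automatic; I do not expect any genuine obstacle beyond this orientation check.
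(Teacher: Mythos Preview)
Your argument is correct, but it proceeds by a different route from the paper. The paper never isolates the characterisation ``a 2-cell $\gl{\phi,\psi}$ is local opcartesian if and only if $\phi$ is invertible''; instead it works only with the \emph{chosen} opcartesian lifts $\gl{1_f,\gamma}$ produced in the preceding lemma, pastes the defining string diagrams of two such side by side, and observes that the resulting horizontal composite $\gamma_!\alpha * \kappa_!\beta$ is literally the chosen opcartesian lift $(\gamma*\kappa)_!(\alpha*\beta)$. Your version trades this direct diagrammatic check for a cleaner abstract step: once the characterisation is in hand, closure under horizontal composition is just the stability of natural isomorphisms under the Godement product. This buys you a statement that covers \emph{all} opcartesian 2-cells at once, whereas the paper's argument treats only the chosen ones and leaves the (standard) passage to arbitrary opcartesian 2-cells implicit; the cost is having to justify the characterisation, which you correctly derive from the explicit form of the chosen lifts plus the usual opcartesian-followed-by-vertical-iso factorisation. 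One small wording slip: you describe a vertical 2-cell as ``one of the form $\gl{\theta,1_g}$ with $\theta$ invertible'', but vertical 2-cells are exactly those with $\psi=1_g$ and \emph{arbitrary} $\theta$; it is the vertical \emph{isomorphisms} that require $\theta$ invertible, which is what your argument actually uses.
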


    \begin{proof}
      Fix fibrations $p\colon A\rightarrowtriangle B$ and $q\colon C\rightarrowtriangle D$ and $r\colon E\rightarrowtriangle F$, and consider 1-cells $\alpha\colon p\to q$ and $\beta\colon q\to r$ in $\FIBFib$ as depicted below:
      \[
        \begin{tikzpicture}[scale=0.5,baseline=($(nw)!0.5!(se)$)]
          \CreateRect{6}{4}
            \path
              coordinate[label=above:$\strut p$] (nw) at (spath cs:north 0.25)
              coordinate[label=above:$\strut g$] (ne) at (spath cs:north 0.75)
              coordinate[label=below:$\strut f$] (sw) at (spath cs:south 0.25)
              coordinate[label=below:$\strut q$] (se) at (spath cs:south 0.75)
            ;
            \draw[spath/save = pq] (nw) to[out=-90,in=90] (se);
            \draw[spath/save = gf, exists] (ne) to[out=-90,in=90] (sw);
            \path[name intersections={of=pq and gf}]
              coordinate[dot,label=90:$\alpha$] (cell) at (intersection-1)
            ;
        \end{tikzpicture}
        \quad
        \begin{tikzpicture}[scale=0.5,baseline=($(nw)!0.5!(se)$)]
          \CreateRect{6}{4}
            \path
              coordinate[label=above:$\strut q$] (nw) at (spath cs:north 0.25)
              coordinate[label=above:$\strut k$] (ne) at (spath cs:north 0.75)
              coordinate[label=below:$\strut h$] (sw) at (spath cs:south 0.25)
              coordinate[label=below:$\strut r$] (se) at (spath cs:south 0.75)
            ;
            \draw[spath/save = qr] (nw) to[out=-90,in=90] (se);
            \draw[spath/save = kh, exists] (ne) to[out=-90,in=90] (sw);
            \path[name intersections={of=qr and kh}]
              coordinate[dot,label=90:$\beta$] (cell) at (intersection-1)
            ;
        \end{tikzpicture}
      \]

      We may consider the following horizontal composition $\alpha * \beta \colon p \to r$ in $\FIBFib$:
      \[
        \begin{tikzpicture}
          \CreateRect{6}{4}
          \path
            coordinate[label=above:$\strut p$] (p) at (spath cs:north 0.35)
            coordinate[label=above:$\strut g$] (g) at (spath cs:north 0.6)
            coordinate[label=above:$\strut k$] (k) at (spath cs:north 0.85)
            coordinate[label=below:$\strut f$] (f) at (spath cs:south 0.25)
            coordinate[label=below:$\strut h$] (h) at (spath cs:south 0.5)
            coordinate[label=below:$\strut q$] (q) at (spath cs:south 0.75)
            ;
          \draw[spath/save=pq] (p) to[out=-90, in=90] (q);
          \draw[spath/save=gf] (g) to[out=-90, in=90] (f);
          \draw[spath/save=kh] (k) to[out=-90, in=90] (h);
          \draw[name intersections={of=pq and gf}] coordinate[dot,label=left:$\alpha$] (alpha) at (intersection-1);
          \draw[name intersections={of=pq and kh}] coordinate[dot,label=right:$\beta$] (beta) at (intersection-1);
        \end{tikzpicture}
      \]

      We now fix natural transformations $\gamma\colon g'\Rightarrow g$ and $\kappa\colon k'\Rightarrow k$ and consider the opcartesian lifts of $\alpha$ and $\beta$ along them respectively:
      \[
        \begin{tikzpicture}[scale=0.5,baseline=($(nw)!0.5!(se)$)]
          \CreateRect{6}{4}
            \path
              coordinate[label=above:$\strut p$] (nw) at (spath cs:north 0.25)
              coordinate[label=above:$\strut g'$] (ne) at (spath cs:north 0.75)
              coordinate[label=below:$\strut f$] (sw) at (spath cs:south 0.25)
              coordinate[label=below:$\strut q$] (se) at (spath cs:south 0.75)
            ;
            \draw[spath/save = pq] (nw) to[out=-90,in=90] (se);
            \draw[spath/save = gf] (ne) to[out=-90,in=90] (sw);
            \path coordinate[dot,label=right:$\gamma$] (gamma) at (spath cs:gf 0.25);
            \path[name intersections={of=pq and gf}]
              coordinate[dot,label=below:$\alpha$] (cell) at (intersection-1)
            ;
        \end{tikzpicture}
        \quad
        \begin{tikzpicture}[scale=0.5,baseline=($(nw)!0.5!(se)$)]
          \CreateRect{6}{4}
            \path
              coordinate[label=above:$\strut q$] (nw) at (spath cs:north 0.25)
              coordinate[label=above:$\strut k'$] (ne) at (spath cs:north 0.75)
              coordinate[label=below:$\strut h$] (sw) at (spath cs:south 0.25)
              coordinate[label=below:$\strut r$] (se) at (spath cs:south 0.75)
            ;
            \draw[spath/save = qr] (nw) to[out=-90,in=90] (se);
            \draw[spath/save = kh] (ne) to[out=-90,in=90] (sw);
            \path coordinate[dot,label=right:$\kappa$] (kappa) at (spath cs:kh 0.25);
            \path[name intersections={of=qr and kh}]
              coordinate[dot,label=below:$\beta$] (cell) at (intersection-1)
            ;
        \end{tikzpicture}
      \]

      If we compute the horizontal composition $\gamma_!\alpha * \kappa_!\beta\colon p \to r$ in $\FIBFib$, we see that we get precisely the opcartesian lift of $\alpha*\beta$ along $\gamma*\kappa\colon g'*k'\to g*k$:
      \[
        \begin{tikzpicture}
          \CreateRect{6}{4}
          \path
            coordinate[label=above:$\strut p$] (p) at (spath cs:north 0.35)
            coordinate[label=above:$\strut g'$] (g) at (spath cs:north 0.6)
            coordinate[label=above:$\strut k'$] (k) at (spath cs:north 0.85)
            coordinate[label=below:$\strut f$] (f) at (spath cs:south 0.25)
            coordinate[label=below:$\strut h$] (h) at (spath cs:south 0.5)
            coordinate[label=below:$\strut q$] (q) at (spath cs:south 0.75)
            ;
          \draw[spath/save=pq] (p) to[out=-90, in=90] (q);
          \draw[spath/save=gf] (g) to[out=-90, in=90] (f);
          \draw[spath/save=kh] (k) to[out=-90, in=90] (h);
          \path coordinate[dot,label=right:$\gamma$] (gamma) at (spath cs:gf 0.25);
          \path coordinate[dot,label=right:$\kappa$] (kappa) at (spath cs:kh 0.25);
          \draw[name intersections={of=pq and gf}] coordinate[dot,label=left:$\alpha$] (alpha) at (intersection-1);
          \draw[name intersections={of=pq and kh}] coordinate[dot,label=right:$\beta$] (beta) at (intersection-1);
        \end{tikzpicture}
        \qedhere
      \]
    \end{proof}

    \begin{cor}\label{cor:2-opfibration}
      The codomain 2-functor $\partial_1\colon \FIBFib\to \CATFib$ is a 2-opfibration (coop-2-fibration) in the sense of Buckley~\cite{buckley:2014}.
    \end{cor}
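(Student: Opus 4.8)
The plan is to recognise that the three preceding lemmas are nothing but the clauses of Buckley's definition of a 2-fibration~\cite{buckley:2014}, read through the duality involutions built into $\CATFib\subseteq\CAT\Co$. Recall that Buckley characterises a 2-functor $P\colon\mathfrak{E}\to\mathfrak{B}$ as a 2-fibration by three conditions: every 1-cell of $\mathfrak{B}$ together with a chosen lift of its codomain admits a cartesian 1-cell lift; every local hom-functor $\mathfrak{E}\prn{x,y}\to\mathfrak{B}\prn{Px,Py}$ is a fibration of categories; and the local cartesian 2-cells are stable under horizontal composition with 1-cells. Since $\CATFib$ has its 2-cells pointing the opposite direction, the notion appropriate to $\partial_1\colon\FIBFib\to\CATFib$ is the doubly-dualised one --- Buckley's \emph{coop}-2-fibration --- whose three clauses read instead: 1-dimensional \emph{op}cartesian lifts, local \emph{op}fibrations, and local opcartesian 2-cells stable under horizontal composition.

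First, I would cite the lemma on closure under 1-dimensional opcartesian lifts: for a fibration $f\colon B\rightarrowtriangle C$ and $p\in\FIB{B}$, the 1-cell $\bar f\colon p\to\OplSum{f}\prn{p}$ over $f$ was shown opcartesian by exhibiting the induced square of hom-categories as a strict pullback in $\CAT$; this is the first clause. Second, the lemma that $\partial_1$ is locally an opfibration --- with opcartesian lift of $\gamma\colon g'\Rightarrow g$ at a 1-cell $\alpha$ given by the whiskered cell $\gamma_!\alpha$ and the unit 2-cell $\gl{1_f,\gamma}$ --- supplies the second clause. Third, the lemma that local opcartesian 2-cells are closed under horizontal composition, verified by computing $\gamma_!\alpha*\kappa_!\beta$ and identifying it with the opcartesian lift of $\alpha*\beta$ along $\gamma*\kappa$, is exactly the third clause. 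Putting these together and unwinding Buckley's definition finishes the proof.

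The hard part, such as it is, is purely a matter of bookkeeping the dualities: one must check that, taking $\CAT\Co$ as the ambient base and the postcomposition 1-cells $\bar f$ as the designated opcartesian lifts, Buckley's three axioms genuinely specialise to the coop-variant rather than to one of the other three dual flavours. The only genuinely substantive point is that, because the 2-cell direction is already reversed in $\CATFib$, what Buckley calls a cartesian 2-cell becomes an opcartesian 2-cell in our local hom-categories, so that his whiskering-stability axiom turns into the horizontal-composition stability established in the last lemma; once this correspondence is spelled out, no further argument is needed.
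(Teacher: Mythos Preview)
Your proposal is correct and takes essentially the same approach as the paper: the corollary is stated without proof precisely because the three preceding lemmas are the three clauses of Buckley's definition, dualised to the coop-variant, and you have simply spelled out this identification together with the duality bookkeeping that the paper leaves implicit.
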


    \begin{thm}
      The codomain 2-functor $\partial_1\colon \FIBFib\to \CATFib$ is a pseudo-fibration (\ie a fibration of bicategories), with cartesian lifts induced by the oplax base change $\OplDiag{p}\colon \FIB{A}\to \FIB{b}$ for fibrations $p\colon A\rightarrowtriangle B$.
    \end{thm}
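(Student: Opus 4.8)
The plan is to dualise \autoref{cor:2-opfibration}: wherever that argument uses the postcomposition 2-functors $\OplSum{g}$ as the action of opcartesian lifts, I would instead use their right pseudo-adjoints $\OplDiag{g}$ from \S\ref{sec:full-development} as the action of cartesian lifts. Morally this is the bicategorical form of the elementary observation that a Grothendieck opfibration becomes a bifibration as soon as its pushforward functors have right adjoints. Following Buckley~\cite{buckley:2014}, I would assemble the pseudo-fibration structure from three ingredients, exactly dual to the three lemmas preceding \autoref{cor:2-opfibration}: (i) $\partial_1$ is \emph{locally a fibration}, \ie each $\FIBFib\prn{p,q}\to\CATFib\prn{B,D}$ is a fibration of categories; (ii) $\partial_1$ is closed under 1-dimensional cartesian lifts, the lift of $q$ over a fibration $g\colon B\rightarrowtriangle D$ being $\OplDiag{g}\prn{q}$; and (iii) local cartesian 2-cells are stable under horizontal composition. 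Because the universal property of the lifts in (ii) will hold only up to equivalence --- the transposition functors $\sharp$ of \S\ref{sec:full-development} being equivalences, not isomorphisms --- the result is a \emph{pseudo}-fibration rather than a strict 2-fibration, the expected mismatch with \autoref{cor:2-opfibration}, where the opcartesian lifts gave strict pullbacks of hom-categories.

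Ingredients (i) and (iii) are the mirror images of their opcartesian counterparts and I would argue them the same way. For (i): given $\gl{f,g,\alpha}\colon p\to q$ with $g\colon B\rightarrowtriangle D$ a fibration, a further fibration $g'\colon B\rightarrowtriangle D$, and a 2-cell $g'\to g$ of $\CATFib$, one reindexes $\alpha$ along that 2-cell to produce a 1-cell over $g'$ --- the hypothesis that $g'$ is itself a fibration being exactly what keeps the square inside $\FIBFib$ --- and the pair of the identity on total parts with the given 2-cell exhibits it as cartesian by a one-line diagram chase. Performing the same pasting degreewise gives (iii).

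The substance is (ii). For a fibration $g\colon B\rightarrowtriangle D$ and $q$ in the fibre over $D$, I would note that since $\OplSum{g}$ acts on displays by postcomposition with $g$, the object $\OplSum{g}\prn{\OplDiag{g}\prn{q}}$ has the total category of $\OplDiag{g}\prn{q}$ as its total category, displayed over $D$ by composing the old display with $g$; hence the counit of $\OplSum{g}\dashv\OplDiag{g}$ is precisely a 1-cell $\varepsilon_{g,q}\colon\OplDiag{g}\prn{q}\to q$ of $\FIBFib$ over $g$. I would then show $\varepsilon_{g,q}$ is cartesian, \ie that for every $p'$ over $B'$ the square
\[
  \DiagramSquare{
    width = 5cm,
    nw = \FIBFib\prn{p',\OplDiag{g}\prn{q}},
    ne = \FIBFib\prn{p',q},
    sw = \CATFib\prn{B',B},
    se = \CATFib\prn{B',D},
    nw/style = dotted pullback,
  }
\]
(bottom edge precomposition with $g$, top edge postcomposition with $\varepsilon_{g,q}$) presents its upper-left vertex as equivalent to the bipullback of the span below. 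For this I would reuse the reorganisation of 1-cell data from the proof that $\partial_1$ is closed under opcartesian lifts: for a fibration $h\colon B'\rightarrowtriangle B$, a 1-cell $p'\to q$ over $g\circ h$ is the same datum as a 1-cell $\OplSum{h}\prn{p'}\to q$ over $g$, and a 1-cell over $g$ is in turn the same datum as a morphism $\OplSum{g}\prn{\OplSum{h}\prn{p'}}\to q$ in the fibre over $D$; transposing across $\OplSum{g}\dashv\OplDiag{g}$ --- naturally in $p'$, and up to the coherent equivalence $\sharp$ --- yields a morphism $\OplSum{h}\prn{p'}\to\OplDiag{g}\prn{q}$ in the fibre over $B$, hence a 1-cell $p'\to\OplDiag{g}\prn{q}$ of $\FIBFib$ over $h$. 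The parallel bookkeeping on 2-cells, supplied by the pseudo-naturality data of $\sharp$ recorded in \S\ref{sec:full-development}, upgrades this to an equivalence of categories over $\CATFib\prn{B',B}$; passing from $g\circ h$ to a general 1-cell $B'\rightarrowtriangle D$ up to isomorphism costs nothing since only a bipullback is demanded.

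It remains to verify the compatibility clauses of Buckley's definition --- that horizontal composites of cartesian 1-cells are again cartesian (immediate from the bipullback description together with the pseudofunctoriality of the assignment $g\mapsto\OplDiag{g}$, itself a consequence of uniqueness of right pseudo-adjoints), and that the global cartesian 1-cells and the local cartesian 2-cells cohere --- after which the dual of \autoref{cor:2-opfibration} applies. I expect the main obstacle to be precisely this interface: pinning down the universal property of $\varepsilon_{g,q}$ as a bona fide bipullback rather than a mere pointwise equivalence, and checking that it meshes with the local fibration structure, all while carefully tracking the local duality involution built into $\CATFib$ so that the 2-cell directions come out right.
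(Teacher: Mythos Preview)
Your proposal and the paper's proof agree on the essential mechanism --- right pseudo-adjoints to opcartesian pushforward supply cartesian lifts --- but the paper does not dualise the three lemmas as you do. Its proof is three sentences: \autoref{cor:2-opfibration} already gives a strict 2-opfibration with opcartesian lifts $\OplSum{p}$; since each $\OplSum{p}$ has a right pseudo-adjoint $\OplDiag{p}$, the standard ``opfibration plus right adjoints'' argument yields cartesian 1-cell lifts, only bicategorically because the adjunctions are only pseudo. No separate verification of local structure or horizontal stability is carried out; the paper simply transports the existing 2-opfibration data across the pseudo-adjunction. Your item~(ii) is an explicit unpacking of exactly this, and your acknowledgement that the hom-category comparison is only an equivalence (via~$\sharp$) matches the paper's reason for the ``pseudo'' qualifier.

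Where your plan overshoots is item~(i). The claim that $\FIBFib\prn{p,q}\to\CATFib\prn{B,D}$ is a \emph{fibration} by the ``mirror image'' of the local-opfibration argument does not go through with the recipe you give. Unwinding the 2-cell convention, a 1-cell $\gl{f,g,\alpha}$ has $\alpha\colon g\circ p\Rightarrow q\circ f$, and a 2-cell $\gl{\phi,\psi}\colon\gl{f,g,\alpha}\to\gl{h,k,\beta}$ satisfies $\beta=(q\phi)\circ\alpha\circ(\psi p)$. For the opcartesian lift along $\gamma\colon g'\Rightarrow g$ one simply sets $\gamma_!\alpha:=\alpha\circ(\gamma p)$ and takes $\phi=1_f$; this always exists. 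But a cartesian lift along a 2-cell $g'\to g$ of $\CATFib$ --- that is, a natural transformation $\delta\colon g\Rightarrow g'$ --- would require producing $\tilde\alpha\colon g'p\Rightarrow q\tilde f$ and $\phi\colon\tilde f\Rightarrow f$ with $\alpha=(q\phi)\circ\tilde\alpha\circ(\delta p)$. Your choice $\phi=1_f$ forces $\alpha$ to factor through $\delta p$, which need not happen. The asymmetry in the 2-cell equation means the two directions are not mirror images; the paper accordingly does not claim local fibrations, and leans instead on the already-verified local \emph{opfibration} structure together with the pseudo-adjunction to get the bicategorical cartesian 1-cells.
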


    \begin{proof}
      We have seen in Corollary~\ref{cor:2-opfibration} that $\partial_1\colon \FIBFib\to \CATFib$ is a 2-opfibration in the strict sense, with opcartesian lifts induced by the 2-functors $\OplSum{p}\colon \FIB{A}\to \FIB{b}$ for fibrations $p\colon A\rightarrowtriangle B$. If these 2-functors had right 2-adjoints, we would then be able to say that they exhibit $\partial_1\colon \FIBFib\to \CATFib$ as a 2-fibration; because they have only pseudo-adjoints, we have instead only a bicategorical version of the fibration condition.
    \end{proof}

\end{xsect}

\begin{xsect}{Conclusions and future work}
  \begin{xsect}{Relationship to weak bisimulation in concurrency semantics}
    There is one additional piece of related work that deserves disussion, namely the \emph{saturation monad} of Fiore, Cattani, and Winskel~\cite{fiore-cattani-winskel:1999}. In the presheaf semantics of concurrency, a closed process is interpreted as a presheaf on a ``path category'' $\mathbb{P}$---such as the category of pomsets. Joyal, Nielsen, and Winskel~\cite{joyal-nielsen-winskel:1996} showed that \emph{strong bisimulation} of processes has a very simple description in terms of the presheaf semantics:
    \begin{enumerate}
      \item An \emph{open map} $f\colon A\to B$ of presheaves is a natural transformation that has the right lifting property with respect to every $\Yo{\mathbb{P}}\prn{x}\to \Yo{\mathbb{P}}\prn{y}$.
      \item A \emph{strong bisimulation} between processes $A$ and $B$ is a span $A\gets C \to B$ of open maps.
    \end{enumerate}

    \noindent
    Open map bisimulation corresponds exactly to \emph{real} strong bisimulation within the image of various full embeddings of concrete concurrency models into $\hat{\mathbb{P}}$. The result of Fiore, Cattani, and Winskel~\cite{fiore-cattani-winskel:1999} is to reduce \emph{weak} bisimulation to (strong) open map bisimulation using a saturation monad on presheaves, by analogy with existing saturation monads on concrete concurrency models like labelled transition systems.

    \begin{xsect}{The 1-dimensional saturation adjunction}
      For a given hiding functor $h\colon \mathbb{P}\to\mathbb{Q}$, Fiore~\etal's saturation monad on $\hat{\mathbb{P}}$ is obtained from the following 1-categorical adjunction, where we have again written $\mathbf{cat}_s$ for the 1-category of small 1-categories:
      \[
        \begin{tikzcd}
          \hat{\mathbb{P}}
            \arrow[r,hookrightarrow,yshift=1.5ex]
            \arrow[r,phantom,"\bot"]
          &
          \mathbf{cat}_s/\mathbb{P}
            \arrow[r,yshift=1.5ex,"\Sum{h}"]
            \arrow[r,phantom,"\bot"]
            \arrow[l,yshift=-1.5ex,""]
          &
          \mathbf{cat}_s/\mathbb{Q}
            \arrow[l,yshift=-1.5ex,"\Diag{h}"]
        \end{tikzcd}
      \]

      The left adjoint $\hat{\mathbb{P}}\hookrightarrow\mathbf{cat}_s/\mathbb{P}$ depicted above takes a presheaf to the display of its category of elements over $\mathbb{P}$. The composite right adjoint sends a displayed category $X$ over $\mathbb{Q}$ to the presheaf
      $
        \mathbf{cat}_s/\mathbb{Q}\prn{\Sum{h}\prn{\Yo{\mathbb{P}}\prn{-}},X}
      $.
    \end{xsect}

    \begin{xsect}{2-dimensional saturation}
      In order to compare Fiore~\etal's saturation construction on an even playing field with our relative Hofmann--Streicher lifting 2-functor, we must upgrade the former to a pseudo-adjunction. First we recall the 2-adjunction $U_{\mathbb{P}}\dashv N_{\mathbb{P}}\colon\CAT/\mathbb{P}\to\FIB{\mathbb{P}}$ that gives \emph{co-free} fibred categories, having recently been studied by Emmenegger, Mesiti, Rosolini, and Streicher~\cite{emmenegger-mesiti-rosolini-streicher:2024}.

      Then the spirit of Fiore~\etal's saturation adjunction is most faithfully captured by the following pseudo-adjunction:
      \[
        \begin{tikzcd}
          \FIB{\mathbb{P}}
            \arrow[r,hookrightarrow,yshift=1.5ex,"U_{\mathbb{P}}"]
            \arrow[r,phantom,"\bot"]
          &
          \CAT/\mathbb{P}
            \arrow[r,yshift=1.5ex,"\Sum{h}"]
            \arrow[r,phantom,"\bot"]
            \arrow[l,yshift=-1.5ex,"N_{\mathbb{P}}"]
          &
          \CAT/\mathbb{Q}
            \arrow[l,yshift=-1.5ex,"\Diag{h}"]
        \end{tikzcd}
      \]

      A displayed category $X\in\CAT/\mathbb{Q}$ is sent by the right pseudo-adjoint above to the (split) fibred category corresponding under straightening to the following diagram of categories:
      \[
        N_{\mathbb{P}}\prn{\Diag{h}\prn{X}}_\bullet \simeq
        \CAT/\mathbb{Q}\prn{\Sum{h}\prn{\Yo{\mathbb{P}}\prn{-}},X}
      \]
    \end{xsect}

    \begin{xsect}{Comparing saturation with relative Hofmann--Streicher lifting}
      We are now able to compare the 2-dimensional version of the saturation adjunction with our relative Hofmann--Streicher lifting adjunction. We will see that the two constructions agree only on \emph{co-free} fibrations below.
      \begin{obs}
        When $h\colon \mathbb{P}\rightarrowtriangle\mathbb{Q}$ is a fibration, the upgraded right pseudo-adjoint $N_{\mathbb{P}}\circ \Diag{h}\colon \CAT/\mathbb{Q}\to\FIB{\mathbb{P}}$ of Fiore~\etal factors through our own $\OplDiag{h}\colon \FIB{\mathbb{Q}}\to\FIB{\mathbb{P}}$ up to equivalence as follows:
        \[
          \DiagramSquare{
            nw = \CAT/\mathbb{Q},
            ne = \FIB{\mathbb{Q}},
            sw = \CAT/\mathbb{P},
            se = \FIB{\mathbb{P}},
            north = N_{\mathbb{Q}},
            west = \Diag{h},
            east = \OplDiag{h},
            south = N_{\mathbb{P}},
            width = 2.5cm,
          }
        \]
      \end{obs}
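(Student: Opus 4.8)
The plan is to deduce the claimed (pseudo-)commutation from the uniqueness of right pseudo-adjoints, rather than by computing either composite directly. Observe that both legs of the square arise as right pseudo-adjoints. Since $h$ is a fibration we have at our disposal four pseudo-adjunctions: $\Sum{h}\dashv\Diag{h}\colon\CAT/\mathbb{Q}\to\CAT/\mathbb{P}$ (base change of displayed categories), $U_{\mathbb{P}}\dashv N_{\mathbb{P}}\colon\CAT/\mathbb{P}\to\FIB{\mathbb{P}}$ and $U_{\mathbb{Q}}\dashv N_{\mathbb{Q}}\colon\CAT/\mathbb{Q}\to\FIB{\mathbb{Q}}$ (co-free fibred categories, after Emmenegger~\etal), and our main theorem $\OplSum{h}\dashv\OplDiag{h}\colon\FIB{\mathbb{Q}}\to\FIB{\mathbb{P}}$. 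Composing pseudo-adjunctions, the functor $N_{\mathbb{P}}\circ\Diag{h}\colon\CAT/\mathbb{Q}\to\FIB{\mathbb{P}}$ is right pseudo-adjoint to $\Sum{h}\circ U_{\mathbb{P}}\colon\FIB{\mathbb{P}}\to\CAT/\mathbb{Q}$, while $\OplDiag{h}\circ N_{\mathbb{Q}}\colon\CAT/\mathbb{Q}\to\FIB{\mathbb{P}}$ is right pseudo-adjoint to $U_{\mathbb{Q}}\circ\OplSum{h}\colon\FIB{\mathbb{P}}\to\CAT/\mathbb{Q}$.

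Next I would recall that these two left pseudo-adjoints literally coincide: this is precisely the strictly commuting square of \S~\ref{sec:intro:oplax-sum-and-base-change}, which for a fibration $h\colon\mathbb{P}\rightarrowtriangle\mathbb{Q}$ asserts $U_{\mathbb{Q}}\circ\OplSum{h}=\Sum{h}\circ U_{\mathbb{P}}$ on the nose, since the oplax sum of fibrations is computed on underlying display functors by postcomposition with $h$, exactly like $\Sum{h}$. As right pseudo-adjoints are unique up to pseudo-natural equivalence, we conclude $N_{\mathbb{P}}\circ\Diag{h}\simeq\OplDiag{h}\circ N_{\mathbb{Q}}$, \ie the square commutes up to an equivalence which is moreover pseudo-natural in $X\in\CAT/\mathbb{Q}$.

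A more hands-on verification is also available by chasing straightenings: for $X\in\CAT/\mathbb{Q}$ one has $\OplDiag{h}\prn{N_{\mathbb{Q}}\prn{X}}_\bullet\cong\FIB{\mathbb{Q}}\prn{\OplSum{h}\prn{\Yo{\mathbb{P}}\prn{-}},N_{\mathbb{Q}}\prn{X}}\simeq\CAT/\mathbb{Q}\prn{U_{\mathbb{Q}}\OplSum{h}\prn{\Yo{\mathbb{P}}\prn{-}},X}=\CAT/\mathbb{Q}\prn{\Sum{h}\prn{\Yo{\mathbb{P}}\prn{-}},X}\simeq N_{\mathbb{P}}\prn{\Diag{h}\prn{X}}_\bullet$, using the fibred-Yoneda description of $\OplDiag{h}$ from \S~\ref{sec:intro:oplax-sum-and-base-change}, the adjunction $U_{\mathbb{Q}}\dashv N_{\mathbb{Q}}$, the identity $U_{\mathbb{Q}}\circ\OplSum{h}=\Sum{h}\circ U_{\mathbb{P}}$ together with $U_{\mathbb{P}}\Yo{\mathbb{P}}\prn{-}=\prn{\mathbb{P}/{-}\to\mathbb{P}}$, and finally the formula for $N_{\mathbb{P}}\circ\Diag{h}$ recorded earlier. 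One then checks that this family of equivalences is natural in the slice variable, so that it assembles into an equivalence of (split) fibrations; this last point is routine.

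The main obstacle, to the extent there is one, is purely bookkeeping: one must confirm that the square of \S~\ref{sec:intro:oplax-sum-and-base-change} does apply with $h$ itself in the role of the fibration $p$, so that $U_{\mathbb{Q}}\circ\OplSum{h}=\Sum{h}\circ U_{\mathbb{P}}$ holds strictly rather than merely up to isomorphism, and that the abstract appeal to uniqueness of right pseudo-adjoints is legitimate in this bicategorical setting, \ie that all four pseudo-adjunctions genuinely exist — which they do, the first three being standard and $\OplSum{h}\dashv\OplDiag{h}$ being our main theorem, available exactly because $h$ is a fibration. No genuinely new computation is required beyond what has already been carried out.
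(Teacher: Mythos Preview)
Your proposal is correct. The hands-on verification you give in the second paragraph is essentially the paper's own proof sketch: the paper computes $\OplDiag{h}\prn{N_{\mathbb{Q}}\prn{X}}_\bullet$ by first applying fibred Yoneda, then the pseudo-adjunction $\OplSum{h}\dashv\OplDiag{h}$, then $U_{\mathbb{Q}}\dashv N_{\mathbb{Q}}$, then the identity $U_{\mathbb{Q}}\circ\OplSum{h}=\Sum{h}\circ U_{\mathbb{P}}$, and finally the displayed formula for $N_{\mathbb{P}}\circ\Diag{h}$---exactly your chain of equivalences, modulo whether one writes the first step as $\FIB{\mathbb{P}}\prn{\Yo{\mathbb{P}}\prn{-},-}$ followed by transposition or goes directly to $\FIB{\mathbb{Q}}\prn{\OplSum{h}\prn{\Yo{\mathbb{P}}\prn{-}},-}$.

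Your first argument, via uniqueness of right pseudo-adjoints, is a genuinely different and more conceptual route that the paper does not take. It is cleaner: once one has the strictly commuting square $U_{\mathbb{Q}}\circ\OplSum{h}=\Sum{h}\circ U_{\mathbb{P}}$ from \S~\ref{sec:intro:oplax-sum-and-base-change}, the four pseudo-adjunctions compose and the conclusion is immediate, with pseudo-naturality coming for free rather than requiring a separate check. The paper's direct computation, on the other hand, makes the equivalence explicit at the level of straightenings, which is perhaps more in keeping with the rest of the section where concrete formulae are being compared. Both are short; yours packages the bookkeeping into a single appeal to a standard bicategorical fact.
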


      \begin{proof}[Sketch]
        Letting $X\in\CAT/\mathbb{Q}$ be a displayed category, we compute as follows:
        \begin{align*}
          \OplDiag{h}\prn{N_{\mathbb{Q}}\prn{X}}_\bullet
          &\simeq
          \FIB{\mathbb{P}}\prn{\Yo{\mathbb{P}}\prn{-},\OplDiag{h}\prn{N_{\mathbb{Q}}\prn{X}}}
          \\
          &\simeq
          \FIB{\mathbb{Q}}\prn{\OplSum{h}\prn{\Yo{\mathbb{P}}\prn{-}},N_{\mathbb{Q}}\prn{X}}
          \\
          &\simeq
          \CAT/\mathbb{Q}\prn{U_{\mathbb{Q}}\prn{\OplSum{h}\prn{\Yo{\mathbb{P}}\prn{-}}},X}
          \\
          &\simeq
          \CAT/\mathbb{Q}\prn{\Sum{h}\prn{\Yo{\mathbb{P}}\prn{-}},X}
          \\
          &\simeq
          N_{\mathbb{P}}\prn{\Diag{h}\prn{X}}_\bullet
          \qedhere
        \end{align*}
      \end{proof}

      \begin{obs}
        The co-free fibration on a displayed category $X\in\CAT/\mathbf{1}$ over the point is precisely $X$, \ie we have $N_{\mathbf{1}}\prn{X}\simeq X$.
      \end{obs}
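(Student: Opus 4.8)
The plan is to reduce the claim to the degeneracy of fibred category theory over the terminal category. First I would record the identifications $\CAT/\mathbf{1}\cong\CAT$ and $\FIB{\mathbf{1}}\cong\CAT$: a displayed category over $\mathbf{1}$ is just its total category; every functor $E\to\mathbf{1}$ is automatically a fibration, since identities supply the required cartesian lifts; every functor between two such is automatically fibred, since the only cartesian arrows over $1_\ast$ are the isomorphisms and every functor preserves those; and every natural transformation between such functors is automatically vertical. Under these identifications the forgetful 2-functor $U_{\mathbf{1}}\colon\FIB{\mathbf{1}}\to\CAT/\mathbf{1}$ becomes (2-naturally isomorphic to) the identity 2-functor on $\CAT$, and in particular it is a 2-equivalence.

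Since $N_{\mathbf{1}}$ is by definition a right 2-adjoint of $U_{\mathbf{1}}$, and a right adjoint of an equivalence is a quasi-inverse, it follows that $N_{\mathbf{1}}$ is itself equivalent to the identity, whence $N_{\mathbf{1}}\prn{X}\simeq X$. Readers who prefer not to invoke that lemma can instead apply the 2-categorical Yoneda lemma: the defining pseudo-natural equivalence $\CAT/\mathbf{1}\prn{U_{\mathbf{1}}F,X}\simeq\FIB{\mathbf{1}}\prn{F,N_{\mathbf{1}}\prn{X}}$ becomes $\CAT\prn{F,X}\simeq\CAT\prn{F,N_{\mathbf{1}}\prn{X}}$ pseudo-naturally in $F\in\CAT$, which forces $N_{\mathbf{1}}\prn{X}\simeq X$.

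I expect no serious obstacle; the one point that requires care is spelling out exactly why $U_{\mathbf{1}}$ corresponds to the identity — in particular, that being a fibred functor imposes no condition over $\mathbf{1}$ — together with citing Emmenegger~\etal\ for the existence of the 2-adjunction $U_{\mathbf{1}}\dashv N_{\mathbf{1}}$. A fully explicit alternative, if one has at hand a description of the cofree fibration $N_A$ in terms of sections of the display pulled back along the slice projections $A/a\to A$, is to observe that for $A=\mathbf{1}$ the only slice is $\mathbf{1}/\ast\cong\mathbf{1}$, so the single fibre of $N_{\mathbf{1}}\prn{X}$ is the category of sections of $X\to\mathbf{1}$, namely $X$ itself, equipped with its trivial cleaving.
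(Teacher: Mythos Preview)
Your argument is correct. The paper states this observation without proof, treating it as self-evident, so there is no proof in the paper to compare against; your elaboration via the identifications $\CAT/\mathbf{1}\cong\CAT\cong\FIB{\mathbf{1}}$ under which $U_{\mathbf{1}}$ becomes the identity is exactly the kind of justification the paper is leaving to the reader.
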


      From these two observations, we immediately obtain the following.

      \begin{cor}
        Saturation agrees with (relative) Hofmann--Streicher lifting over the point. More precisely, for any categories $C$ and $E$, we have $N_{\mathbb{P}}\prn{\Diag{!_C}\prn{E}} \simeq \OplDiag{!_C}\prn{E}$.
      \end{cor}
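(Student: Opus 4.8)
The plan is to deduce the corollary directly from the two preceding observations, with essentially no work left to do. First I would record that the unique functor ${!_C}\colon C\rightarrowtriangle\mathbf{1}$ is a Grothendieck fibration: every functor with codomain $\mathbf{1}$ is trivially a fibration, since any arrow of $C$ is vacuously cartesian over the unique arrow of $\mathbf{1}$. Consequently the first observation applies verbatim with the path category taken to be $C$, the target taken to be $\mathbf{1}$, and the hiding functor taken to be ${!_C}$.

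Next I would instantiate the commuting square of that observation at the displayed category $E\in\CAT/\mathbf{1}$ --- that is, at the category $E$ viewed under the canonical identification $\CAT\cong\CAT/\mathbf{1}$ --- obtaining an equivalence $N_C\prn{\Diag{!_C}\prn{E}}\simeq\OplDiag{!_C}\prn{N_{\mathbf{1}}\prn{E}}$ in $\FIB{C}$. Invoking the second observation, namely $N_{\mathbf{1}}\prn{E}\simeq E$, and applying the pseudofunctor $\OplDiag{!_C}$ --- which preserves equivalences, as every pseudofunctor does --- yields $\OplDiag{!_C}\prn{N_{\mathbf{1}}\prn{E}}\simeq\OplDiag{!_C}\prn{E}$. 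Composing the two equivalences gives the desired $N_C\prn{\Diag{!_C}\prn{E}}\simeq\OplDiag{!_C}\prn{E}$.

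The main point is that there is no real obstacle here: the statement is a formal corollary of the two observations, and the only facts invoked --- that ${!_C}$ is a fibration and that pseudofunctors send equivalences to equivalences --- are immediate. For a reader who prefers to avoid citing the first observation, one can instead chase fibres directly, using $N_C\prn{\Diag{!_C}\prn{E}}_\bullet\simeq\CAT/\mathbf{1}\prn{\Sum{!_C}\prn{\Yo{C}\prn{-}},E}$, identifying $\Sum{!_C}\prn{\Yo{C}\prn{c}}$ with the terminal map $C/c\to\mathbf{1}$, and recognising the resulting hom-category as $\CAT\prn{C/c,E}$ --- but this simply reproduces the computation already carried out in the sketch proof of the first observation.
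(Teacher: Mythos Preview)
Your proposal is correct and matches the paper's approach exactly: the paper simply states that the corollary follows immediately from the two preceding observations, and you have spelled out precisely the two-step chain $N_C\prn{\Diag{!_C}\prn{E}}\simeq\OplDiag{!_C}\prn{N_{\mathbf{1}}\prn{E}}\simeq\OplDiag{!_C}\prn{E}$ that makes this immediate. The auxiliary remarks you add---that ${!_C}$ is trivially a fibration and that pseudofunctors preserve equivalences---are the obvious well-definedness checks and do not depart from the paper's intent.
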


      \noindent
      Therefore, our construction and that of Fiore~\etal are two \emph{different} generalisations of Hofmann--Streicher lifting; ours is the correct one for internalising and iterating Hofmann--Streicher lifting, whereas theirs is the correct one for reducing weak bisimulation to (strong) open map bisimulation in presheaf models of concurrency. It would be very interesting indeed to find out whether our own relative Hofmann--Streicher lifting does anything useful in the world of concurrency semantics.
    \end{xsect}
  \end{xsect}

  \begin{xsect}{Acknowledgements}
    We are very grateful to Mathieu Anel, Nathanael Arkor, Steve Awodey, Reid Barton, Marcelo Fiore, and Daniel Gratzer for useful conversations during the course of this project. Steve's clear explanations of his functorial version of Hofmann--Streicher lifting were extremely helpful, and it was a lecture of his in Stockholm that inspired the second named author to pursue the present work. Finally, we thank Thomas Streicher for his lifetime of contribution to type theory and category theory, and for his kindness over the years. We are deeply saddened that we will no longer be able to discuss this work with him.
  \end{xsect}
\end{xsect}

\nocite{awodey:2024:universes}
\bibliographystyle{alphaurl}
\bibliography{refs-bibtex}

\newcommand{\etalchar}[1]{$^{#1}$}
\begin{thebibliography}{CCHM17}

\bibitem[ABC{\etalchar{+}}21]{abcfhl:2021}
Carlo Angiuli, Guillaume Brunerie, Thierry Coquand, Kuen-Bang {Hou (Favonia)},
  Robert Harper, and Daniel~R. Licata.
\newblock Syntax and models of {C}artesian cubical type theory.
\newblock {\em Mathematical Structures in Computer Science}, 31(4):424--468,
  2021.
\newblock \href {https://doi.org/10.1017/S0960129521000347}
  {\path{doi:10.1017/S0960129521000347}}.

\bibitem[AGV72]{sga:4}
Michael Artin, Alexander Grothendieck, and Jean-Louis Verdier.
\newblock {\em Th{\'e}orie des topos et cohomologie {\'e}tale des sch{\'e}mas},
  volume 269, 270, 305 of {\em Lecture Notes in Mathematics}.
\newblock Springer-Verlag, Berlin, 1972.
\newblock S{\'e}minaire de G{\'e}om{\'e}trie Alg{\'e}brique du Bois-Marie
  1963--1964 (SGA 4), Dirig{\'e} par M. Artin, A. Grothendieck, et J.-L.
  Verdier. Avec la collaboration de N. Bourbaki, P. Deligne et B. Saint-Donat.

\bibitem[AL19]{ahrens-lumsdaine:2019}
Benedikt Ahrens and Peter~LeFanu Lumsdaine.
\newblock Displayed categories.
\newblock {\em Logical Methods in Computer Science}, 15, March 2019.
\newblock \href {https://doi.org/10.23638/LMCS-15(1:20)2019}
  {\path{doi:10.23638/LMCS-15(1:20)2019}}.

\bibitem[Awo24]{awodey:2024:universes}
Steve Awodey.
\newblock On {Hofmann--Streicher} universes.
\newblock {\em Mathematical Structures in Computer Science}, 34(9):1--17, 2024.
\newblock \href {https://doi.org/10.1017/S0960129524000203}
  {\path{doi:10.1017/S0960129524000203}}.

\bibitem[Bak11]{bakovic:2011}
Igor Bakovi{\'c}.
\newblock Fibrations of bicategories.
\newblock Preprint, 2011.
\newblock URL: \url{https://www2.irb.hr/korisnici/ibakovic/groth2fib.pdf}.

\bibitem[B{\'e}n73]{benabou:1973}
Jean B{\'e}nabou.
\newblock {\em Probl\`{e}mes dans les topos : d'apr\`{e}s le cours de Questions
  sp{\'e}ciales de math{\'e}matique}.
\newblock Number~34 in S{\'e}minaires de math{\'e}matique pure : Rapport, no
  34. Louvain-la-Neuve : Institut de math{\'e}matique pure et appliqu{\'e}e,
  Universit{\'e} catholique de Louvain, 1973.

\bibitem[BGC{\etalchar{+}}16]{bgcmb:2016}
Ale\v{s} Bizjak, Hans~Bugge Grathwohl, Ranald Clouston, Rasmus~Ejlers
  M{\o{}}gelberg, and Lars Birkedal.
\newblock Guarded dependent type theory with coinductive types.
\newblock In Bart Jacobs and Christof L\"{o}ding, editors, {\em Foundations of
  Software Science and Computation Structures: 19th International Conference,
  FOSSACS 2016, Held as Part of the European Joint Conferences on Theory and
  Practice of Software, ETAPS 2016, Eindhoven, The Netherlands, April 2--8,
  2016, Proceedings}, pages 20--35, Berlin, Heidelberg, 2016. Springer Berlin
  Heidelberg.
\newblock \href {https://arxiv.org/abs/1601.01586} {\path{arXiv:1601.01586}},
  \href {https://doi.org/10.1007/978-3-662-49630-5_2}
  {\path{doi:10.1007/978-3-662-49630-5_2}}.

\bibitem[BM20]{bizjak-mogelberg:2020}
Ale\v{s} Bizjak and Rasmus~Ejlers M\o{}gelberg.
\newblock Denotational semantics for guarded dependent type theory.
\newblock {\em Mathematical Structures in Computer Science}, 30(4):342--378,
  2020.
\newblock \href {https://doi.org/10.1017/S0960129520000080}
  {\path{doi:10.1017/S0960129520000080}}.

\bibitem[BMSS11]{bmss:2011}
Lars Birkedal, Rasmus~Ejlers M{\o{}}gelberg, Jan Schwinghammer, and Kristian
  St\o{}vring.
\newblock First steps in synthetic guarded domain theory: Step-indexing in the
  topos of trees.
\newblock In {\em Proceedings of the 2011 IEEE 26th Annual Symposium on Logic
  in Computer Science}, pages 55--64, Washington, DC, USA, 2011. IEEE Computer
  Society.
\newblock \href {https://arxiv.org/abs/1208.3596} {\path{arXiv:1208.3596}},
  \href {https://doi.org/10.1109/LICS.2011.16}
  {\path{doi:10.1109/LICS.2011.16}}.

\bibitem[Buc14]{buckley:2014}
Mitchell Buckley.
\newblock Fibred 2-categories and bicategories.
\newblock {\em Journal of Pure and Applied Algebra}, 218(6):1034--1074, 2014.
\newblock \href {https://doi.org/10.1016/j.jpaa.2013.11.002}
  {\path{doi:10.1016/j.jpaa.2013.11.002}}.

\bibitem[Car78]{cartmell:1978}
John Cartmell.
\newblock {\em Generalised Algebraic Theories and Contextual Categories}.
\newblock PhD thesis, Oxford University, January 1978.

\bibitem[Car86]{cartmell:1986}
John Cartmell.
\newblock Generalised algebraic theories and contextual categories.
\newblock {\em Annals of Pure and Applied Logic}, 32:209--243, 1986.

\bibitem[CCHM17]{cchm:2017}
Cyril Cohen, Thierry Coquand, Simon Huber, and Anders M\"{o}rtberg.
\newblock {Cubical Type Theory: a constructive interpretation of the univalence
  axiom}.
\newblock {\em IfCoLog Journal of Logics and their Applications},
  4(10):3127--3169, November 2017.
\newblock \href {https://arxiv.org/abs/1611.02108} {\path{arXiv:1611.02108}}.

\bibitem[Dyb96]{dybjer:1996}
Peter Dybjer.
\newblock Internal type theory.
\newblock In Stefano Berardi and Mario Coppo, editors, {\em Types for Proofs
  and Programs: International Workshop, TYPES '95 Torino, Italy, June 5--8,
  1995 Selected Papers}, pages 120--134. Springer Berlin Heidelberg, Berlin,
  Heidelberg, 1996.

\bibitem[EMRS24]{emmenegger-mesiti-rosolini-streicher:2024}
Jacopo Emmenegger, Luca Mesiti, Giuseppe Rosolini, and Thomas Streicher.
\newblock A comonad for {Grothendieck} fibrations.
\newblock {\em Theory and Applications of Categories}, 40(13):371--389,
  November 2024.
\newblock URL: \url{https://arxiv.org/abs/2305.01474}, \href
  {https://arxiv.org/abs/2305.01474} {\path{arXiv:2305.01474}}.

\bibitem[FCW99]{fiore-cattani-winskel:1999}
M.~Fiore, G.~L. Cattani, and G.~Winskel.
\newblock Weak bisimulation and open maps.
\newblock In {\em Proceedings. 14th Symposium on Logic in Computer Science
  (Cat. No. PR00158)}, pages 67--76, 1999.
\newblock \href {https://doi.org/10.1109/LICS.1999.782590}
  {\path{doi:10.1109/LICS.1999.782590}}.

\bibitem[Gir71]{giraud:1971}
Jean Giraud.
\newblock {\em Cohomologie non ab{\'e}lienne}.
\newblock Grundlehren der mathematischen Wissenschaften. Springer-Verlag GmbH
  Germany, 1971.
\newblock \href {https://doi.org/10.1007/978-3-662-62103-5}
  {\path{doi:10.1007/978-3-662-62103-5}}.

\bibitem[Gir72]{girard:1972}
Jean-Yves Girard.
\newblock {\em Interpr{\'e}tation fonctionelle et {\'e}limination des coupures
  de l'arithm{\'e}tique d'ordre sup{\'e}rieur}.
\newblock PhD thesis, Universit{\'e} Paris VII, 1972.

\bibitem[Gra74]{gray:1974}
John~W. Gray.
\newblock {\em Formal Category Theory: Adjointness for 2-Categories}, volume
  391 of {\em Lecture Notes in Mathematics}.
\newblock Springer-Verlag, Berlin, Heidelberg, 1974.
\newblock \href {https://doi.org/10.1007/BFb0061280}
  {\path{doi:10.1007/BFb0061280}}.

\bibitem[Her99]{hermida:1993:fib}
Claudio Hermida.
\newblock Some properties of {Fib} as a fibred 2-category.
\newblock {\em Journal of Pure and Applied Algebra}, 134(1):83--109, 1999.
\newblock \href {https://doi.org/10.1016/S0022-4049(97)00129-1}
  {\path{doi:10.1016/S0022-4049(97)00129-1}}.

\bibitem[Her04]{hermida:2004}
Claudio Hermida.
\newblock Descent on 2-fibrations and strongly 2-regular 2-categories.
\newblock {\em Applied Categorical Structures}, 12(5):427--459, October 2004.
\newblock \href {https://doi.org/10.1023/B:APCS.0000049311.17100.da}
  {\path{doi:10.1023/B:APCS.0000049311.17100.da}}.

\bibitem[HS97]{hofmann-streicher:1997}
Martin Hofmann and Thomas Streicher.
\newblock Lifting {G}rothendieck universes.
\newblock Unpublished note, 1997.
\newblock URL:
  \url{https://www2.mathematik.tu-darmstadt.de/~streicher/NOTES/lift.pdf}.

\bibitem[Hum12]{hummon:2012}
Bruce~M. Hummon.
\newblock {\em Surface diagrams for {G}ray-categories}.
\newblock PhD thesis, University of Chicago, 2012.

\bibitem[Jac93]{jacobs:1993}
Bart Jacobs.
\newblock Comprehension categories and the semantics of type dependency.
\newblock {\em Theoretical Computer Science}, 107(2):169--207, 1993.
\newblock \href {https://doi.org/10.1016/0304-3975(93)90169-T}
  {\path{doi:10.1016/0304-3975(93)90169-T}}.

\bibitem[JNW96]{joyal-nielsen-winskel:1996}
Andr{\'e} Joyal, Mogens Nielsen, and Glynn Winskel.
\newblock Bisimulation from open maps.
\newblock {\em Information and Computation}, 127(2):164--185, 1996.
\newblock \href {https://doi.org/10.1006/inco.1996.0057}
  {\path{doi:10.1006/inco.1996.0057}}.

\bibitem[JS93]{joyal-street:1993}
Andr{\'e} Joyal and Ross Street.
\newblock Pullbacks equivalent to pseudopullbacks.
\newblock {\em Cahiers de Topologie et G{\'e}om{\'e}trie Diff{\'e}rentielle
  Cat{\'e}goriques}, 34(2):153--156, 1993.
\newblock URL: \url{http://eudml.org/doc/91500}.

\bibitem[LN16]{lucatelli-nunes:2016}
Fernando Lucatelli~Nunes.
\newblock On biadjoint triangles.
\newblock {\em Theory and Applications of Categories}, 31(9):217--256, 2016.

\bibitem[ML71]{martin-lof:1971}
Per Martin-L\"{o}f.
\newblock A theory of types.
\newblock Unpublished manuscript, 1971.

\bibitem[ML75]{martin-lof:itt:1975}
Per Martin-L\"{o}f.
\newblock An intuitionistic theory of types: Predicative part.
\newblock In H.~E. Rose and J.~C. Shepherdson, editors, {\em Logic Colloquium
  '73}, volume~80 of {\em Studies in Logic and the Foundations of Mathematics},
  pages 73--118. Elsevier, 1975.
\newblock \href {https://doi.org/10.1016/S0049-237X(08)71945-1}
  {\path{doi:10.1016/S0049-237X(08)71945-1}}.

\bibitem[ML79]{martin-lof:1979}
Per Martin-L\"{o}f.
\newblock Constructive mathematics and computer programming.
\newblock In {\em 6th International Congress for Logic, Methodology and
  Philosophy of Science}, pages 153--175, Hanover, August 1979.
\newblock Published by North Holland, Amsterdam. 1982.

\bibitem[ML84]{martin-lof:1984}
Per Martin-L\"{o}f.
\newblock {\em Intuitionistic type theory}, volume~1 of {\em Studies in Proof
  Theory}.
\newblock Bibliopolis, 1984.

\bibitem[SS25]{slattery-sterling:2025}
Andrew Slattery and Jonathan Sterling.
\newblock {Hofmann-Streicher lifting of fibred categories : Dedicated to the
  memory of Thomas Streicher (1958-2025)}.
\newblock 2025.
\newblock \href {https://doi.org/10.1109/LICS65433.2025.00018}
  {\path{doi:10.1109/LICS65433.2025.00018}}.

\bibitem[Str05]{streicher:2005}
Thomas Streicher.
\newblock Universes in toposes.
\newblock In Laura Crosilla and Peter Schuster, editors, {\em From Sets and
  Types to Topology and Analysis: Towards practical foundations for
  constructive mathematics}, volume~48 of {\em Oxford Logical Guides}, pages
  78--90. Oxford University Press, Oxford, 2005.
\newblock \href {https://doi.org/10.1093/acprof:oso/9780198566519.001.0001}
  {\path{doi:10.1093/acprof:oso/9780198566519.001.0001}}.

\bibitem[Str21]{streicher:2021:fib}
Thomas Streicher.
\newblock Fibered categories \`{a} la {Jean B{\'e}nabou}.
\newblock Revised notes of a course on fibered categories given at a spring
  school in Munich 1999., 2021.
\newblock \href {https://arxiv.org/abs/1801.02927} {\path{arXiv:1801.02927}}.

\bibitem[Web07]{weber:2007}
Mark Weber.
\newblock Yoneda structures from 2-toposes.
\newblock {\em Applied Categorical Structures}, 15(3):259--323, June 2007.
\newblock \href {https://doi.org/10.1007/s10485-007-9079-2}
  {\path{doi:10.1007/s10485-007-9079-2}}.

\bibitem[Zer30]{zermelo:1930}
Ernst Zermelo.
\newblock {\"{U}ber Grenzzahlen und Mengenbereiche}.
\newblock {\em Fundamenta Mathematicae}, 16(1):29--47, 1930.

\end{thebibliography}

\end{document}